\newenvironment{proof}{\noindent{\bf Proof.\,}}{\hfill$\Box$}
\newtheorem{theorem}{Theorem}
\newtheorem{corollary}[theorem]{Corollary}
\newtheorem{lemma}{Lemma}
\newtheorem{definition}{Definition}
\newtheorem{remark}{Remark}
\newcommand{\codeg}{{\rm codeg}}
 \font\xviiroman=cmr17
\def\udot{\mathbin{\ooalign{$\cup$\crcr
   \hfil\raise 5pt\hbox{\xviiroman\hspace{-0pt}.}\hfil\crcr}}}
\def\bigudotx#1#2{\mathop{\smash{\ooalign{$#1\bigcup$\crcr
   \hfil\raise 7pt\hbox{#2}\hfil\crcr}}\vphantom{\bigcup}}}
\date{\dateline{Feb 10, 2007}{Jan 4, 2008}\\
   \small Mathematics Subject Classification: 05C15, 05C55}
\newcommand{\arrows}{\longrightarrow}
\newcommand{\notarrows}{\not\!\!\longrightarrow}
\begin{document}
\title{Avoiding rainbow induced subgraphs in vertex-colorings}
\author{Maria Axenovich and Ryan Martin\thanks{Research supported in part by NSA grant H98230-05-1-0257} \\
\small Department of Mathematics, Iowa State University,
 Ames, IA 50011 \\[-0.8ex]
\small \texttt{axenovic@iastate.edu, rymartin@iastate.edu}}
\maketitle

\begin{abstract}
For a fixed graph $H$ on $k$ vertices, and a graph $G$ on at least
$k$ vertices, we write $G\arrows H$ if in any vertex-coloring of $G$
with $k$ colors, there is an induced subgraph isomorphic to $H$
whose vertices have distinct colors. In other words, if $G\arrows H$
then a totally multicolored induced copy of $H$ is unavoidable in
any vertex-coloring of $G$ with $k$ colors.  In this paper, we show
that, with a few notable exceptions, for any graph $H$ on $k$
vertices and for any graph $G$ which is not isomorphic to $H$,
$G\notarrows H$. We explicitly describe all exceptional cases. This
determines the induced vertex-anti-Ramsey number for all graphs and
shows that totally multicolored induced subgraphs are, in most
cases, easily avoidable.
\end{abstract}

\section{Introduction}
Let $G=(V,E)$ be a graph. Let $c:V(G) \rightarrow [k]$ be a
vertex-coloring of $G$. We say that $G$ is \textit{monochromatic under $c$} if all vertices have the same color and we say that $G$ is \textit{rainbow} or \textit{totally multicolored under $c$} if all vertices of $G$ have distinct colors. The existence of a graph forcing an induced monochromatic subgraph isomorphic to $H$  is well
known. The following bounds are due to Brown and R\"odl:

\begin{theorem}[Vertex-Induced Graph Ramsey Theorem~\cite{BR}]
\label{vertex-induced} For all graphs $H$,  and all positive
integers $t$ there exists a graph  $R_t(H)$ such that if the
vertices of $R_t(H)$ are colored with $t$ colors, then there is an induced subgraph of $R_t(H)$ isomorphic to $H$ which is
monochromatic. Let the order of $R_t(H)$ with smallest number of vertices be $r_{mono}(t,H)$. Then there are constants $C_1=C_1(t),C_2=C_2(t)$ such that  $C_1k^2\leq\max\{r_{mono}(t,H): |V (H)| =
k\}\leq  C_2k^2 \log_2 k.$
\end{theorem}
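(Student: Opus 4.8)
The plan is to obtain existence of $R_t(H)$ together with the upper bound from an explicit construction, and the lower bound by exhibiting one ``hard'' graph $H$.

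\smallskip
\noindent\emph{The construction (existence and the case $t=2$).} Fix $H$ with $V(H)=[k]$. Set $R_1(H):=H$ and, for $t\ge 2$, let $R_t(H)$ be the blow-up of $H$ by $R_{t-1}(H)$: take $k$ vertex-disjoint copies $W_1,\dots,W_k$ of $R_{t-1}(H)$, join $W_i$ completely to $W_j$ when $ij\in E(H)$, and leave $W_i,W_j$ anticomplete otherwise. Given any coloring of $V(R_t(H))$ with $t$ colors, either some $W_i$ receives at most $t-1$ colors --- and then, by induction, $W_i$ contains a monochromatic induced copy of $H$ --- or every $W_i$ uses all $t$ colors; in the latter case, fixing a color $c$ and choosing a $c$-colored vertex $w_i\in W_i$ for each $i$ yields a set $\{w_1,\dots,w_k\}$ with $w_i\sim w_j$ exactly when $ij\in E(H)$, i.e.\ a monochromatic induced copy of $H$. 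Since $|V(R_1(H))|=k$ and $|V(R_t(H))|=k\,|V(R_{t-1}(H))|$, this gives $r_{mono}(t,H)\le k^{t}$; in particular $r_{mono}(2,H)\le k^{2}$ for every $H$ on $k$ vertices, which already yields the upper bound when $t=2$.

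\smallskip
\noindent\emph{The upper bound $C_2 k^2\log_2 k$ for all $t$.} The estimate $k^t$ is too weak once $t\ge3$. To obtain a bound $C_2(t)\,k^2\log_2 k$ uniform in $t$, I would replace ``one copy of $R_{t-1}(H)$ per vertex'' by a randomized amalgamation: place $O(k\log k)$ small gadget graphs over a host $F$ on $O(k\log k)$ vertices, chosen so that the family of induced copies of $H$ in the resulting graph on $O(k^2\log k)$ vertices cannot be covered by $t$ vertex sets each inducing an $H$-free graph; the $\log k$ should come out of a union bound over the relevant colorings. The main obstacle is designing $F$ (and the gadgets) so that the induced copies of $H$ are simultaneously abundant, ``spread out'' enough to defeat every $t$-coloring, and confined to $O(k^2\log k)$ vertices. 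A uniform random host $G(N,1/2)$ will not do: for the hardest graphs $H$ it already has $H$-free induced subgraphs on a linear number of vertices, forcing $N$ exponential in $k$, so the construction must be tailored to the given $H$.

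\smallskip
\noindent\emph{The lower bound $C_1 k^2$.} I would exhibit a single $H$ on $k$ vertices with $r_{mono}(t,H)\ge C_1(t)\,k^2$, i.e.\ show that every graph $G$ on fewer than $C_1 k^2$ vertices admits a $t$-coloring of $V(G)$ with no monochromatic induced copy of $H$. A random $t$-coloring argument is hopeless here --- it only rules out graphs $G$ with boundedly many vertices --- so I would pick an $H$ whose induced copies are expensive to host (for instance a balanced disjoint union of cliques, its complement, or a graph with essentially all degrees distinct) and give an explicit structural coloring: partition $V(G)$ greedily, exploiting the fact that on only $\sim k^2/C$ vertices $G$ cannot carry the dense, highly overlapping family of induced copies of $H$ that an arrowing graph must have, and arrange that each color class misses the defining feature of $H$ (say a $\tfrac k2$-clique together with an anticomplete $\tfrac k2$-independent set). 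Making this coloring succeed for \emph{every} graph $G$ of the prescribed order --- which is exactly where the special structure of the chosen $H$ must be used --- is the step I expect to be hardest.
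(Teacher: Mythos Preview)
The paper does not prove this theorem: Theorem~\ref{vertex-induced} is quoted from Brown and R\"odl~\cite{BR} as background, with no argument given here. So there is nothing in the present paper to compare your proposal against.

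On the substance of what you wrote: your blow-up construction is correct and standard, and it does establish existence of $R_t(H)$ together with the bound $r_{mono}(t,H)\le k^t$; for $t=2$ this already gives the upper bound (even without the $\log_2 k$). But for $t\ge 3$ your proposal does not contain a proof. The paragraph on the upper bound is a description of what a successful argument would have to accomplish (``randomized amalgamation'', ``the $\log k$ should come out of a union bound''), together with a correct observation that a naive random host fails; no construction is actually specified, and no probability computation is carried out. The lower-bound paragraph is in the same state: you name candidate graphs $H$ and say one should ``give an explicit structural coloring'', but no coloring is produced and no argument is given that it works for every $G$ of the prescribed order. As you yourself flag, both of these are exactly the hard parts of the theorem, and they are not done here. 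What you have is a correct warm-up (existence and the $t=2$ upper bound) plus an outline of the remaining difficulties, not a proof of the quantitative statement.
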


Theorem \ref{vertex-induced} is one of numerous vertex-Ramsey
results investigating the existence of induced monochromatic
subgraphs, including the studies of Folkman numbers such as in \cite{LR}, \cite{BD} and others. There are also ``canonical''-type theorems claiming the existence of monochromatic or rainbow substructures (see, for example, a general survey paper by Deuber \cite{D}). The paper of Eaton and R\"odl provides the following specific result for vertex-colorings of graphs.
\begin{theorem}[Vertex-Induced Canonical Graph Ramsey
Theorem~\cite{ER}]
For all graphs $H$,  there is a graph  $R_{can}(H)$ such that if $R_{can}(H)$ is vertex-colored then there is an induced subgraph of $R_{can}(H)$ isomorphic to $H$ which is either monochromatic or rainbow. Let the order of such a graph with the smallest number of vertices be $r_{can}(H)$. There are constants $c_1, c_2$ such that $c_1 k^3 \leq \max\{r_{can}(H): |V(H)|=k\} \leq c_2 k^4 \log k$. \label{thm2}
\end{theorem}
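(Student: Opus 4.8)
The plan is to treat the lower bound $c_1k^3$ and the upper bound $c_2k^4\log k$ separately. The lower bound follows quite directly from the monochromatic case in Theorem~\ref{vertex-induced}, whereas the upper bound requires passing a suitably ``blown-up'' copy of $H$ through Theorem~\ref{vertex-induced} and then extracting a monochromatic or rainbow induced $H$ by a Hall-type argument on transversals.

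For the lower bound, the key observation is that for every $H$ on $k$ vertices,
\[
 r_{can}(H)\ \ge\ r_{mono}(k-1,\,H).
\]
Indeed, if $G$ has the property that every vertex-coloring yields a monochromatic or rainbow induced copy of $H$, then in particular this holds for colorings using at most $k-1$ colors; but such a coloring can never produce a rainbow induced $H$ (that needs $k$ distinct colors), so it must produce a monochromatic one. Hence $G$ forces a monochromatic induced $H$ under every $(k-1)$-coloring, and so $|V(G)|\ge r_{mono}(k-1,H)$. It then suffices to observe that $G\to_{mono,t}H$ is equivalent to $V(G)$ not being partitionable into $t$ parts each inducing an $H$-free (induced) graph, and that the lower-bound construction behind Theorem~\ref{vertex-induced}, carried out with $t=k-1$ colors, yields an $H$ on $k$ vertices for which reaching this threshold forces order $\Omega(t\,k^2)$ vertices; with $t=k-1$ this is $\Omega(k^3)$, and taking the maximum over $H$ with $|V(H)|=k$ gives the bound.

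For the upper bound, let $B$ be a blow-up of $H$ in which each vertex is replaced by an independent set (of size about $k$), so $|V(B)|=O(k^2)$, and let $G=R_{k-1}(B)$ as supplied by Theorem~\ref{vertex-induced}, of order $O(k^4\log k)$ up to the dependence of the multicolor Ramsey constant on the number of colors. Given an arbitrary coloring $c$ of $V(G)$, split into two cases. If $c$ uses at most $k-1$ colors, Theorem~\ref{vertex-induced} yields a monochromatic induced copy of $B$, and any transversal of its blobs is an induced copy of $H$ that is monochromatic under $c$. If $c$ uses at least $k$ colors, the goal is a rainbow induced $H$: inside an induced copy of $B$ one tries to select one vertex from each blob so that the $k$ chosen colors are pairwise distinct. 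By Hall's theorem this rainbow transversal exists unless some family $S$ of blobs collectively uses fewer than $|S|$ colors; choosing such an $S$ of minimum size one shows $\bigcup_{i\in S}I_i$ uses exactly $|S|-1$ colors and is ``tight'', hence contains entirely monochromatic blobs, and these -- together with ordinary transversals through the positions outside $S$ and an induction on the positions inside $S$ -- are used to produce a monochromatic induced copy of $H$ instead.

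The step I expect to be the main obstacle is precisely this last one: converting a Hall-deficient family of blobs into a genuine monochromatic (or rainbow) induced copy of $H$, rather than merely an induced copy of the proper induced subgraph $H[S]$. Making this watertight is likely to require a more delicate auxiliary graph than a single independent-set blow-up -- for instance blowing up simultaneously into independent sets and into cliques, or iterating the construction -- while keeping the number of colors handed to Theorem~\ref{vertex-induced} bounded by $O(k)$ so that the $O(k^4\log k)$ order survives. The lower bound, by contrast, needs nothing beyond the reduction above together with the routine fact that the multicolor induced vertex-Ramsey number grows at least linearly in the number of colors.
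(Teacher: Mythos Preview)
This theorem is not proved in the paper at all: it is quoted from Eaton and R\"odl~\cite{ER} as background, alongside Theorem~\ref{vertex-induced}, to motivate the anti-Ramsey question studied here. So there is no ``paper's own proof'' to compare against; your proposal is an attempt to reconstruct the argument of~\cite{ER}, not anything the present authors do.

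On the substance of your sketch, two genuine gaps stand out beyond the one you flag yourself. First, both halves lean on the dependence of the constants $C_1(t),C_2(t)$ in Theorem~\ref{vertex-induced} on the number of colors $t$, which that theorem, as stated, does not quantify. You need $C_1(t)=\Omega(t)$ to turn $r_{mono}(k-1,H)\ge C_1(k-1)k^2$ into $\Omega(k^3)$, and you need $C_2(t)$ bounded (or very slowly growing) to keep $R_{k-1}(B)$ at $O(k^4\log k)$ when $|V(B)|=\Theta(k^2)$; neither is free. Second, in the ``$c$ uses at least $k$ colors'' branch of the upper bound you write ``inside an induced copy of $B$ one tries to select a rainbow transversal'', but you have not said where that induced copy of $B$ comes from: Theorem~\ref{vertex-induced} only hands you a monochromatic $B$ under $(k{-}1)$-colorings, and under an arbitrary coloring with many colors you have no control over any particular induced $B$. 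The Hall-deficiency endgame you worry about is indeed the crux, but the argument is already incomplete one step earlier.
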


In this paper, we study the existence of totally multicolored
induced subgraphs isomorphic to a fixed graph $H$, in any coloring
of a graph $G$ using exactly $k=|V(H)|$ colors. We call a coloring
of vertices, with $k$ nonempty color classes, a
\textit{$k$-coloring}. Whereas the induced-vertex Ramsey theory
minimizes the order of a graph that forces a desired induced
monochromatic graph, it is clear that for the multicolored case a
similar  goal is trivially achieved by the graph $H$ itself.  What is
not clear is whether it is possible to construct an arbitrarily
large graph $G$ with the property that any $k$-coloring of $V(G)$
induces a rainbow $H$.

\begin{definition}
   Let $G$ and $H$ be two graphs.  We say ``$G$ arrows $H$'' and write \textit{$G\arrows H$} if for any coloring of the vertices of $G$ with exactly $|V(H)|$ colors, there is an induced rainbow subgraph isomorphic to $H$.  Let
   $$ f(H)=\max\{|V(G)|:G\arrows H\} , $$
   if such a $\max$ exists.  If not, we write $f(H)=\infty$.
\end{definition}

It follows from the definition that  if $f(H)= \infty$ then for any $n_0\in {\mathbb N}$ there is  $n>n_0$ and a graph $G$ on $n$ vertices such  that any $k$-coloring of vertices of $G$ produces a rainbow induced copy of $H$. The function $f$ was first investigated by the first author in \cite{A}.

\begin{theorem}[\cite{A}]\label{A}
Let $H$ be a graph on $k$ vertices.  If $H$ or its complement is (1)
a complete graph, (2) a star or (3) a disjoint union of two adjacent
edges and an isolated vertex, then $f(H)=\infty$; otherwise
$f(H)\leq 4k-2$.
\end{theorem}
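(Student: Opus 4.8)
The plan is to split the statement into its two halves, tied together by one observation: a $k$-colouring of $G$ is literally a $k$-colouring of $\overline G$, and $G[S]\cong H$ exactly when $\overline G[S]\cong\overline H$, so $G\arrows H$ iff $\overline G\arrows\overline H$; hence $f(H)=f(\overline H)$ and the family of exceptions is automatically closed under complementation, as the statement requires. So it suffices to (I) exhibit, for each exceptional $H$, arbitrarily large graphs $G$ with $G\arrows H$, and (II) show that if $H$ on $k$ vertices is none of the exceptions and $|V(G)|>4k-2$, then $G$ has a $k$-colouring with no rainbow induced copy of $H$.

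For (I) the complete case is immediate: in any $k$-colouring of $K_n$ ($n\ge k$) some $k$ vertices of distinct colours induce $K_k$, so $K_n\arrows K_k$ for all $n$. For the star I would take $G=K_{1,N}$: the centre has one colour, the remaining $k-1$ colours must occur somewhere, necessarily on leaves, and one leaf of each of those colours together with the centre induces a rainbow $K_{1,k-1}$. Complementing handles $\overline{K_k}$ and $\overline{K_{1,k-1}}=K_{k-1}\cup K_1$. This leaves only $P_3\cup K_1$ and its complement the paw, and producing an explicit arbitrarily large witness here is, I expect, the delicate point of (I): the colourer's cheap moves are to make one colour class meet every induced $P_3\cup K_1$ twice, or to force two ``paired'' vertices of every such copy to share a colour (for example by making every induced $P_3$ monochromatic through a common centre, as happens for stars), so the witness must be engineered so that its induced copies of $P_3\cup K_1$ overlap richly---the $P_3$'s not all funnelling through a bounded vertex set, with many admissible choices of the isolated vertex---in order to defeat all such strategies; I would design and then verify such a gadget.

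For (II) the main device is the colouring that puts $k-1$ singleton colour classes on a set $T$ with $|T|=k-1$ and colour $k$ on all remaining vertices; then a rainbow induced $H$ is exactly an induced copy of the form $G[T\cup\{s\}]$ with $s\notin T$, so it is enough to find a $(k-1)$-set $T$ that does not extend to $H$. If $\alpha(H)\le k-2$ but $\alpha(G)\ge k-1$, take $T$ to be an independent $(k-1)$-set: then $\alpha(G[T\cup\{s\}])\ge k-1>\alpha(H)$, so no copy exists; the clique version (using $\omega$) is symmetric. Since $H\ne K_k,\overline{K_k}$ and (for $k\ge4$) $H$ cannot have both a $(k-1)$-clique and a $(k-1)$-independent set, these two moves fail only when (a) $H$ has a clique or independent set of size $k-1$, which (after discarding the star, co-star and paw) pins $H$ down to $K_{k-1}$, or $\overline{K_{k-1}}$, plus one extra vertex of a prescribed degree, and a short ad hoc argument exploiting this rigid shape together with $|V(G)|$ being large produces a suitable $T$ or colouring; or (b) $\alpha(G),\omega(G)\le k-2$, in which case also $\alpha(H),\omega(H)\le k-2$, since otherwise $G$ contains no induced $H$ at all.

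The residual case of (II)---$|V(G)|>4k-2$ with all four of $\alpha(G),\omega(G),\alpha(H),\omega(H)$ at most $k-2$---is the real obstacle and the source of the constant. I would still chase a non-extendable $(k-1)$-set $T$: it is enough that $G[T]$ be isomorphic to none of the at most $k$ graphs $H-v$. So the plan is to argue that a graph on more than $4k-2$ vertices with clique and independence number at most $k-2$ must induce more than $k$ pairwise non-isomorphic $(k-1)$-vertex subgraphs, so some $T$ works; and that if instead $G$ induces only boundedly many such subgraphs then $G$ is highly structured (a near blow-up of a small graph, or nearly split, and so on), so that a direct colouring succeeds---possibly one using two non-singleton classes, chosen so that the ``completion graph'' $\{\{a,b\}:G[S\cup\{a,b\}]\cong H\}$ on $V(G)\setminus S$ is disconnected for some $(k-2)$-set $S$, after which one $2$-colours its two sides. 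Turning these structural sub-cases into the sharp threshold $4k-2$, rather than just some linear bound, is where I expect the bulk of the effort to go.
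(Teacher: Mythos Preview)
The paper does not prove this theorem at all: it is quoted from \cite{A} and then superseded by the stronger Theorem~\ref{summary}. So there is no ``paper's own proof'' of the $4k-2$ bound to compare against. What the paper does supply that is relevant to your proposal is an explicit infinite family $\mathcal{L}=\{G(m):m\ge 1\}$ of graphs on $7m$ vertices with $G(m)\arrows\Lambda$; this is exactly the gadget you say you would ``design and then verify'' but never produce. For the stronger bound $f(H)\le k+2$, the paper's method is entirely different from your clique/independence-number dichotomy: it shows (Lemma~\ref{consecutive-degrees}) that if $G\arrows H$ with $|V(G)|\ge k+2$ then $H$ has consecutive degree sequence, then (Lemma~\ref{Delta-delta}) that $\Delta(H)-\delta(H)\le 3$, and finishes with a case analysis on $\Delta(H)\le 3$ and on whether $G$ is bounded by $H$.

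Your proposal, taken on its own terms, has two genuine gaps. First, part~(I) for $\Lambda$ is not a proof but a description of what a proof would have to defeat; you give no candidate graph. Second, and more seriously, your residual case in~(II) is openly unfinished: you assert that a graph on more than $4k-2$ vertices with $\alpha,\omega\le k-2$ ``must induce more than $k$ pairwise non-isomorphic $(k-1)$-vertex subgraphs,'' but this is exactly the heart of the matter and you neither prove it nor indicate why the constant should be $4k-2$ rather than, say, $k^2$. Your fallback plan (near blow-ups, two non-singleton colour classes, connectivity of a ``completion graph'') is a list of tactics, not an argument. Even your case~(a), where $\alpha(H)=k-1$, is dismissed with ``a short ad hoc argument'' covering a one-parameter family of graphs $K_{1,d}+\overline{K_{k-1-d}}$; that argument has to be supplied. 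As written, this is a reasonable exploratory outline, but it is not close to a proof of either direction beyond the complete-graph and star cases.
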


We improve the bound on $f(H)$ to the best possible bound on graphs $H$ for which $f(H)<\infty$.
\begin{theorem}
Let $H$ be a graph on $k$ vertices.  If $H$ or its complement is (1)
a complete graph, (2) a star or (3) a disjoint union of two adjacent
edges and an isolated vertex, then $f(H)=\infty$; otherwise
$f(H)\leq k+2$ if $k$ is even and $f(H)\leq k+1$ if $k$ is
odd.\label{summary}
\end{theorem}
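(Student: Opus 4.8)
The case where $H$ or its complement is complete, a star, or $P_3\cup K_1$ is settled by Theorem~\ref{A}, which already gives $f(H)=\infty$; so the whole content of the improvement is the upper bound for the remaining graphs. Fix such an $H$ on $k$ vertices and a graph $G$ on $n$ vertices with $n\geq k+3$ when $k$ is even and $n\geq k+2$ when $k$ is odd; the goal is to produce a $k$-coloring of $V(G)$ with no rainbow induced copy of $H$. Two observations set up the argument. First, complementation fixes $V(G)$, sends induced copies of $H$ to induced copies of $\overline H$, and leaves unchanged which vertex sets are rainbow, so $G\arrows H\iff\overline G\arrows\overline H$ and hence $f(H)=f(\overline H)$; thus we may assume, say, that $H$ has minimum degree at most $(k-1)/2$. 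Second, every coloring we will use has the same shape: all but $n-k$ of the vertices receive a private color, and the ``surplus'' $n-k$ vertices are absorbed either into a single color class of size $n-k+1$ (with $k-1$ singletons) or, when $n-k\leq 3$, into two or three classes of size $2$ (with the remaining vertices as singletons).

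The reason this shape is convenient is that in such a coloring a rainbow induced copy of $H$ must realize all $k$ colors on its $k$ vertices, hence must contain every singleton and exactly one vertex from each non-singleton class; so a rainbow induced $H$ is exactly $G$ with a prescribed small transversal of the non-singleton classes deleted. Introduce the auxiliary graph $B$ on $V(G)$ with $\{a,b\}\in E(B)$ iff $G-a-b\cong H$ (and, for the one-big-class template, record which $(k-1)$-sets of $V(G)$ cannot be extended to an induced $H$). Then a rainbow-$H$-free $k$-coloring of the relevant shape exists as soon as $\overline B$ realizes a short list of patterns. In the principal regime $k$ odd, $n=k+2$, it is enough that $\overline B$ contain a triangle (put the size-$3$ class on that triple, so the three pair-deletions inside it avoid $H$) or contain a $K_{2,2}$ across two disjoint pairs (put two size-$2$ classes on those pairs, so the four transversal deletions avoid $H$). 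For even $k$, $n=k+3$, the analogous list is shorter still, and for larger $n$ one simply wants a $(k-1)$-set of $G$ admitting no extension to $H$, for which there is ample room. So in every case the task is reduced to a Tur\'an/Ramsey-type question about $\overline B$.

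The crux is to rule out the bad scenario, in which $\overline B$ misses all the required patterns — that is, $\overline B$ has girth at least $5$ and $B$ is correspondingly dense. The plan is an enumerative argument: $G-a-b\cong H$ forces $\deg_G(a)+\deg_G(b)-[a\sim b]$ to equal the constant $e(G)-e(H)$, so if this identity holds for the quadratically many pairs guaranteed by $\alpha(B)\leq 2$ (which is exactly ``$\overline B$ triangle-free''), then $G$ is almost regular and, on closer inspection, so rigid that $G$ — and with it $H\cong G-a-b$ — must be, up to complementation, a complete graph, a star, or (only when $k=4$) $P_3\cup K_1$; in every case $H$ is one of the excluded graphs, a contradiction. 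This is precisely where the parity of $k$ enters: the most symmetric obstruction is $B=K_n$ minus a perfect matching, for which $\overline B$ is a perfect matching and indeed has no triangle and no $K_{2,2}$, and this requires $n$ even. It is genuinely realized for even $k$ — for instance $G=K_{2,2,2}$ arrows $K_4-e$, so $f(K_4-e)=6=k+2$ — which forces one up to $n=k+3$ for even $k$; for odd $k$, $n=k+2$ is odd, only a near-perfect matching is available, and it is already eliminated.

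The main obstacle is this last step — extracting the rigid structural conclusion ``$H$ is exceptional'' from the metric information ``$B$ is dense'' — together with the chore of handling the low-complexity regimes where the generic coloring argument has no slack: $k$ small, $G$ or $\overline G$ disconnected, or $H$ (equivalently $\overline H$) possessing a pair of twin vertices; in those regimes one falls back on an explicit elementary construction of a good $k$-coloring. Accordingly I would lay out the proof as: (i) the complementation reduction and the fixed family of coloring templates; (ii) the reduction of ``$G\notarrows H$'' to finding a triangle or a $K_{2,2}$ in $\overline B$ (with the easier analogues for even $k$ and for large $n$); (iii) the structural dichotomy for dense $B$, organized by the connectivity of $H$ and $\overline H$ and by the presence of twins; (iv) assembling the pieces, with the parity of $k$ separating the bounds $k+1$ and $k+2$.
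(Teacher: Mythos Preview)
Your framework is correct as far as it goes, but for $n=k+2$ it is a tautology rather than a reduction: since the two coloring templates you list exhaust all $k$-colorings of a $(k+2)$-set, one has $G\arrows H$ \emph{if and only if} $\overline B$ is triangle-free and $C_4$-free. So the entire content of the theorem sits in your step~(iii), which you yourself flag as ``the main obstacle'' and do not carry out. The degree identity on $B$-edges does push $G$ toward near-regularity, but from there to ``$H$ is exceptional'' is the whole difficulty, and your parity heuristic misreads it: you treat the perfect matching $\overline B=M_{n/2}$ as \emph{the} obstruction, yet for $G$ the Petersen graph and $H=P'$ one computes $\overline B=P$ itself, a $3$-regular girth-$5$ graph that is not a matching, and likewise for Hoffman--Singleton. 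Dismissing odd $n$ because it admits no perfect matching ignores every other girth-$\ge 5$ graph on an odd vertex set (paths, odd cycles, \ldots), each of which must be ruled out as a candidate $\overline B$.

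The paper's route is organized around a different invariant. Instead of the pair-deletion graph $B$, it uses the Deck Lemma --- every $(k-1)$-subset of $V(G)$ induces a member of $\mathrm{deck}(H)$ --- to constrain $H$ directly: first $H$ must have a consecutive degree sequence, then $\Delta(H)-\delta(H)\le 3$, which already forces $n\le k+3$ in the generic regime. The endgame for the tight regular case then invokes the Hoffman--Singleton theorem (the only diameter-$2$, girth-$5$ regular graphs are $C_5$, the Petersen graph, the Hoffman--Singleton graph, and possibly a $57$-regular Moore graph) together with Higman's result that a $(57,2)$-Moore graph cannot be vertex-transitive; the non-regular and small-$\Delta$ cases require further explicit lemmas. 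None of this machinery appears in your outline, and I do not see how to avoid it: once you reach ``$G$ nearly regular with $\overline B$ of girth $\ge 5$'' you are staring at precisely the Moore-graph classification. Your large-$n$ sketch has the same gap --- when $G\arrows H$ the Deck Lemma says \emph{every} $(k-1)$-set extends to an induced $H$, so ``ample room'' buys nothing without the degree-sequence constraints on $H$ that make large $n$ impossible in the first place.
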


What we prove in this paper is stronger.  First, we find $f(H)$ for
all graphs $H$.  Second, we are able to explicitly classify almost
all pairs $(G,H)$ for which $G\arrows H$. We describe some
classes of graphs and state our main result in the following
section.\\

\section {Main Result}

Let $K_n, E_n, S_n, C_n, P_n$ be a complete graph, an
empty graph, a star, a cycle and a path on $n$ vertices,
respectively.
Let $H_1+H_2$ denote the vertex-disjoint union of graphs $H_1$ and $H_2$. We denote  $\Lambda= P_3+K_1$.
  If $H$ is a graph, let $\overline{H}$ denote its
complement.  Let $P$ and $\Theta$ be the Petersen and
Hoffman-Singleton graphs, respectively; see Wolfram Mathworld
(\cite{MathWorldP} and \cite{MathWorldHS}, respectively) for
beautiful pictures.

Let $kH$ denote the vertex-disjoint union of $k$
copies of graph $H$.  We write $H\approx H'$ if $H$ is isomorphic to
$H'$ and we say that $H\in\{H_1,H_2,\ldots\}$ if there exists an
integer $i$ for which $H\approx H_i$.  We write $G-v$ to denote the
subgraph of $G$ induced by the vertex set $V(G)\setminus\{v\}$. A
graph is \textit{vertex-transitive} if, for every distinct
$v_1,v_2\in V(G)$, there is an automorphism, $\varphi$, of $G$ such
that $\varphi(v_1)=v_2$.  A
graph is \textit{edge-transitive} if, for every distinct
$\{x_1,y_1\},\{x_2,y_2\}\in E(G)$, there is an automorphism, $\varphi$, of $G$ such that either both $\varphi(x_1)=x_2$ and $\varphi(y_1)=y_2$ or both $\varphi(x_1)=y_2$ and $\varphi(y_1)=x_2$.

Let $P'$ and $\Theta'$ be the graphs obtained by
deleting two nonadjacent vertices from $P$ and $\Theta$,
respectively.  In the proof of Lemma \ref{regular}, we establish that both $\overline{P}$ and $\overline{\Theta}$ are edge-transitive, thus $P'$ and $\Theta'$ are well-defined.
 For $\ell\geq 3$, let $M_{\ell}$ denote a matching with $\ell$
edges;  let $M_{\ell}'$ denote the graph obtained by deleting two
nonadjacent vertices from $M_{\ell}$.  We say that a graph is
\textit{trivial} if it is either complete or empty. \\

We define several classes of graphs in order to prove the main
theorem.

Let $\mathcal{C}$ denote the class of connected graphs on
at least three vertices.

Let $\mathcal{P}'_3$ denote the set of graphs  $G=(V,E)$ such that
there is a nontrivial vertex-partition $V=V_1\cup V_2\cup V_3$, with
(a) $V_i\neq \emptyset$, for all $i=1,2,3$, (b) the tripartite
subgraph of $G$ obtained by deleting all edges with both endpoints in  $V_i$, $i=1,2,3$ is a vertex disjoint union of complete tripartite graphs and
bipartite graphs, each with vertices in only two of the parts $V_1,
V_2, V_3$;   see Figure \ref{P3}. Let $\mathcal{P}_3$ be the set of
all graphs on at least $4$ vertices which are not in
$\mathcal{P}'_3$.

\begin{figure}[h]
\hfill \begin{minipage}[t]{.45\textwidth}
\begin{center}
\epsfig{file=./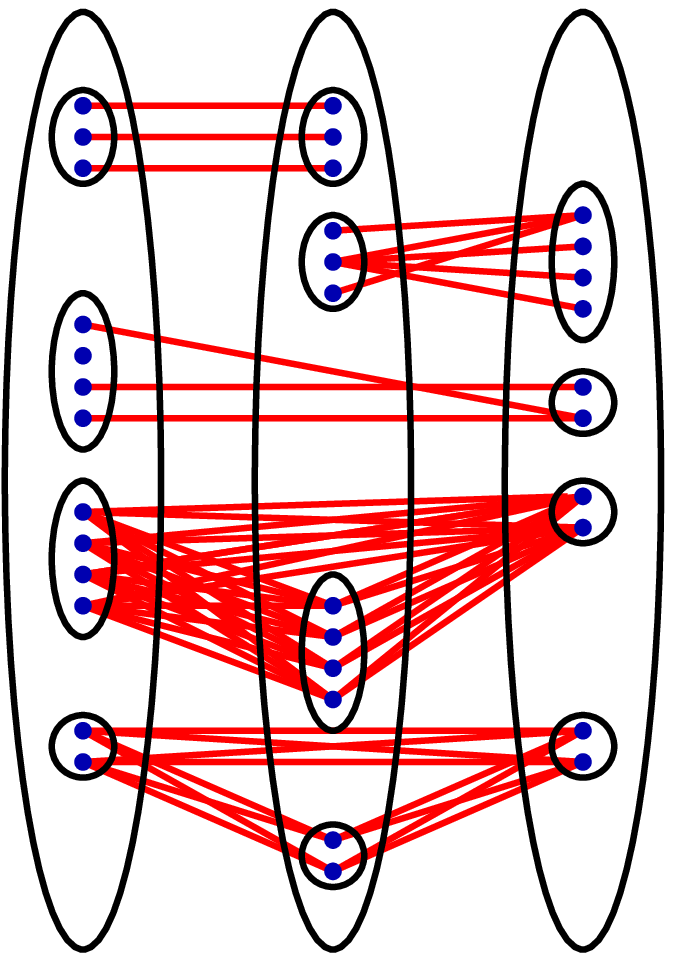, scale=0.5} \caption{A graph from class
$\mathcal{P}'_3$.} \label{P3}
\end{center}
\end{minipage}
\hfill
\begin{minipage}[t]{.45\textwidth}
\begin{center}
\epsfig{file=./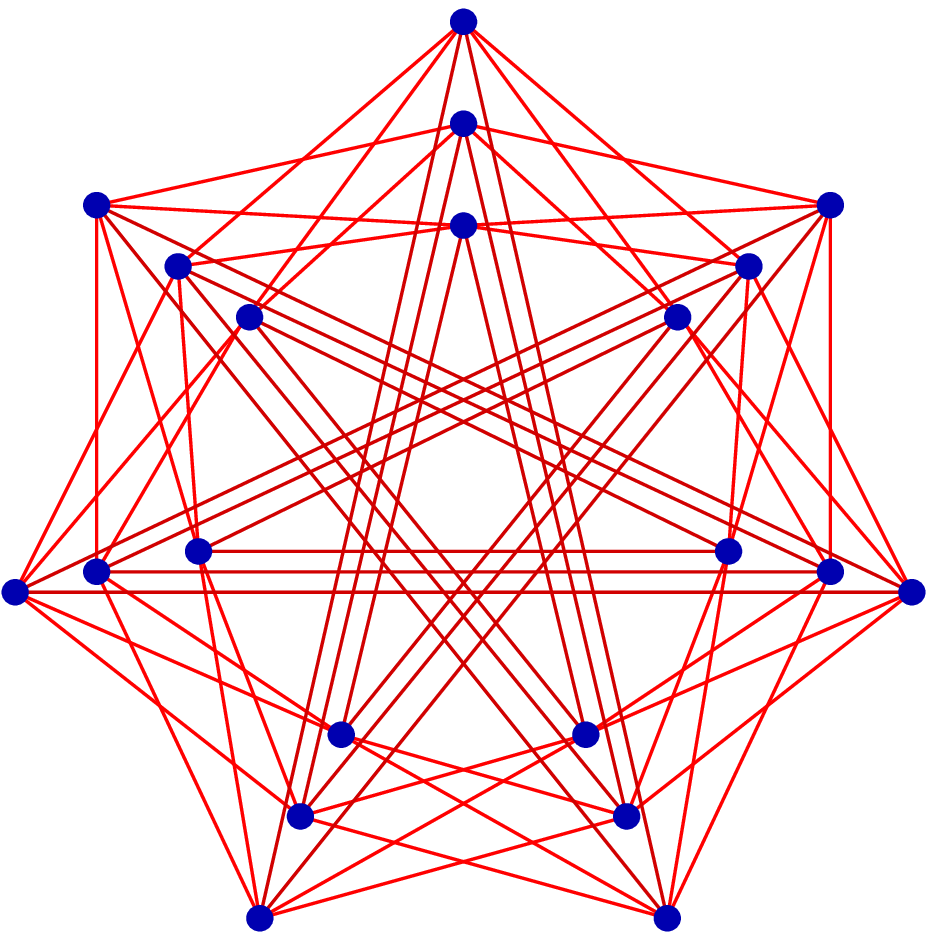, scale=0.55} \caption{Graph $G(3)$ which
arrows $\Lambda$.} \label{Lambda}
\end{center}
\end{minipage}
\hfill
\end{figure}

Let $\mathcal{L}=\{G(m): m\geq 1\}$, where $G(m)=(V,E)$, $V=\{v(i,j)
: 0\leq i\leq 6, 1\leq j\leq m\}$, $E=\{v(i,j)v(i+1, k) : 1\leq
j,k\leq m, j\neq k, 0\leq i\leq 6\}\cup\{v(i,j)v(i+3,j) : 1\leq
j\leq m, 0\leq i\leq 6\}$, addition is taken modulo $7$, see Figure
\ref{Lambda} for an illustration.

Let $\mathcal{T}$ denote the set of graphs $T$ such that (a) neither
$T$ nor $\overline{T}$ is complete or a star, and (b) either $T$ is
vertex-transitive or there exists a vertex, $v$ of degree $0$ or
$|V(T)|-1$ such that $T-v$ is vertex-transitive.  Note that a perfect matching is an example of a graph in $\mathcal{T}$.  If
$T\in\mathcal{T}$, denote $T'$ to be the graph that is obtained from
$T$ by deleting a vertex $w$ that is neither of degree $0$ nor
of degree $|V(T)|-1$. Let $\mathcal{T}'=\{T' : T\in\mathcal{T}\}$.  Note that,
given $T'\in\mathcal{T}'$, the corresponding graph $T\in\mathcal{T}$
is unique.

Let $\mathcal{F}_{\infty}=\left\{K_k,\overline{K_k} : k\geq
2\right\}\cup\left\{S_k,\overline{S_k} : k\geq
3\right\}\cup\{\Lambda,\overline{\Lambda}\}$.  As we see in Theorem
\ref{A}, $H\in\mathcal{F}_{\infty}$ iff $f(H)=\infty$. Observe (see also \cite{A}) that $G\arrows H$ if and only if
$\overline{G}\arrows\overline{H}$. In order to classify all graphs $G$ which arrow $H$, we introduce
the following notation $${\mathcal Arrow }(H)=\{G:  G\arrows H,
G\not\approx H\}.$$

\renewcommand{\labelenumi}{(\Alph{enumi})}
\renewcommand{\theenumi}{(\Alph{enumi})}
\begin{theorem}[Main Theorem]\label{main}

~\\
\begin{itemize}
\item
${\mathcal Arrow}(\Lambda) \supseteq \mathcal{L}$,\\
$~~{\mathcal Arrow}(K_k)=
\begin{cases}
{\mathcal C}&~ \mbox{ if }~k=2,\\
\{K_n: n> k\}&~ \mbox{ if }~k \geq 3,
\end{cases} $\\
$~~ {\mathcal Arrow}(S_k)=
\begin{cases}
\mathcal{P}_3& ~\mbox{ if }~k=3,\\
\{S_n: n>k\} &~\mbox{ if }~ k\geq 4,
\end{cases}$\\
\item
${\mathcal Arrow}(P')=\{P\}$,  $~~~{\mathcal Arrow}(\Theta')=\{\Theta\}$, \\
${\mathcal Arrow}(M'_\ell)= \{M_{\ell}, M_{\ell-1}+K_1\}$, $\ell\geq 3$,\\
\item
${\mathcal Arrow}(T')=\{T\}$, if $T'\in \mathcal{T'}$ and $T'\not\approx M_{\ell}'$, $\ell\geq 3$,\\
\item
If $H,\overline{H}\not\in\mathcal{F}_{\infty}\cup\{P', \Theta'\}\cup\mathcal{T}'$, then
${\mathcal Arrow}(H)=\emptyset$.\\
\end{itemize}
\end{theorem}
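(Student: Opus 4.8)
The plan is to prove the Main Theorem in several stages, with the final item (D) — the claim that ${\mathcal Arrow}(H)=\emptyset$ whenever $H$ and $\overline H$ avoid all the exceptional families — being essentially a corollary of the more precise structural work done for the earlier items. Since $G\arrows H$ iff $\overline G\arrows\overline H$, it suffices throughout to argue about $H$ or $\overline H$, whichever is more convenient; in particular we may assume $H$ is not a star and not complete (those are in $\mathcal F_\infty$), and is not $\Lambda$ or $\overline\Lambda$.

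First I would isolate the key reduction: if $G\arrows H$ with $G\not\approx H$ and $|V(H)|=k$, then $|V(G)|$ is forced to be very small, namely $|V(G)|\le k+2$ (even $k$) or $k+1$ (odd $k$), which is Theorem~\ref{summary}. So the analysis splits by $n:=|V(G)|\in\{k+1,k+2\}$. The case $n=k+1$ is the heart of the matter: a $k$-coloring of a $(k{+}1)$-vertex graph is exactly a choice of one pair of vertices sharing a color (the remaining $k-1$ vertices getting distinct colors). Thus $G\arrows H$ means: for \emph{every} pair $\{u,v\}\subseteq V(G)$, there is a set $S$ of $k$ vertices with $|S\cap\{u,v\}|\le 1$ — actually with exactly one of the two ``merged'' colors present — such that $G[S]\approx H$ and the coloring restricted to $S$ is rainbow. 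Concretely: for every pair $\{u,v\}$ and every way to realize the color classes, some induced $H$ appears rainbow. Equivalently, for every pair $\{u,v\}$, there exists a vertex $w\in V(G)$ with $w\notin\{u,v\}$, such that $G-w$ contains an induced copy of $H$ that uses at most one of $u,v$ — wait, more carefully, one shows $G\arrows H$ (for $n=k+1$) is equivalent to the statement that for every pair $\{u,v\}$ and every choice of which of $u,v$ ``survives'', $G$ minus the other vertex contains an induced $H$; this is the combinatorial condition that drives everything. One then argues this forces $G$ to be highly symmetric: deleting any vertex must leave a copy of $H$ ``in many positions,'' which pushes $G-w\approx H$ for \emph{all} $w$, i.e., $G$ is $H$-vertex-critical of a strong kind. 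Reconstruction-type / vertex-transitivity arguments then show $G\in\mathcal T$ and $H\approx T'$, so $H\in\mathcal T'$, contradicting the hypothesis of (D) — unless instead $H$ is a path $P'$-type or $\Theta'$-type graph, where the rigidity of Moore graphs (Petersen, Hoffman–Singleton) is what makes the arrowing possible, again covered by the hypothesis's exclusion of $P',\Theta'$.

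Second I would handle $n=k+2$, which can only happen when $k$ is even. Here a $k$-coloring of a $(k{+}2)$-vertex graph either merges one color class of size $3$ or two color classes of size $2$ each. The condition $G\arrows H$ now says that \emph{every} such partition yields a rainbow induced $H$ on a transversal. This is more restrictive, and I expect one shows it forces $G$ to be essentially a blow-up / layered structure — this is exactly where the classes $\mathcal L$ (the graphs $G(m)$) and the $\mathcal P_3$/$\mathcal P_3'$ machinery enter, relevant only for $H\in\{\Lambda, S_3, K_2\}$ and their complements. For $H,\overline H\notin\mathcal F_\infty\cup\{P',\Theta'\}\cup\mathcal T'$ one argues no $(k{+}2)$-vertex $G$ can arrow $H$: pick an adversarial partition (e.g.\ merge three vertices whose pairwise ``types'' with respect to the rest of $G$ are as varied as possible) and show no transversal induces $H$, using that $H$ is not too symmetric.

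The main obstacle, I expect, is the case $n=k+1$ and specifically ruling out all the ``near-misses'': graphs $G$ on $k+1$ vertices for which $G-w\approx H$ for every $w$ but $G\notin\mathcal T$ and $H\notin\{P',\Theta'\}$. This requires a clean characterization of graphs all of whose vertex-deleted subgraphs are isomorphic to a common graph $H$ — a ``hypomorphism'' condition — together with the extra rainbow-coloring constraint that not only must a copy of $H$ appear in $G-w$, but it must avoid using a prescribed color-merged pair. Controlling this interaction forces a case analysis on the orbit structure of $\mathrm{Aut}(G)$ and on whether $G$ has a universal or isolated vertex; Lemma~\ref{regular} (establishing edge-transitivity of $\overline P,\overline\Theta$) and Lemma~\ref{regular}'s companions are presumably invoked to pin down the Moore-graph exceptions. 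Once the $k+1$ case is fully classified, item (D) follows by assembling: the hypothesis excludes exactly the outputs $\{K_k,\overline{K_k},S_k,\overline{S_k},\Lambda,\overline\Lambda\}$ (the $f(H)=\infty$ graphs and the only ones with larger arrowing graphs), $\{P',\Theta'\}$ (Moore-graph deletions), and $\mathcal T'$ (vertex-transitive-type deletions), leaving no $G\not\approx H$ with $G\arrows H$, i.e.\ ${\mathcal Arrow}(H)=\emptyset$.
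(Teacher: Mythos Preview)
The proposal has a structural gap at its foundation: you invoke Theorem~\ref{summary} (the bound $f(H)\le k+2$) at the outset to split into $n=k+1$ versus $n=k+2$, but in the paper that bound is a \emph{consequence} of the Main Theorem, not an independently available input. Without it you have no a priori control on $n=|V(G)|$. The paper builds this bound from scratch via degree analysis: Lemma~\ref{consecutive-degrees} shows that if $n\ge k+2$ then $H$ has a consecutive degree sequence; the Deck Lemma (Lemma~\ref{deck}) and Lemma~\ref{Delta-delta} then force $\Delta-\delta\le 3$; and the ``bounded by $H$'' machinery of Lemma~\ref{bounded} converts this into $n\le k+\Delta-\delta\le k+3$, with the case split (bounded vs.\ not bounded; regular vs.\ not) finishing the job. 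None of this degree apparatus appears in your plan, and the $n\ge k+2$ part of your sketch (``blow-up/layered structure'', ``pick an adversarial partition'') does not substitute for it.

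A second gap is the placement of the Moore-graph exceptions. You fold Petersen and Hoffman--Singleton into the $n=k+1$ discussion as alternatives to the $\mathcal T$ outcome, but $(P,P')$ and $(\Theta,\Theta')$ are $n=k+2$ pairs ($|V(P)|=10$, $|V(P')|=8$), and $P',\Theta'\notin\mathcal T'$. In the paper they arise only through Lemma~\ref{regular}: when $G$ is $\Delta$-regular on $\ge k+2$ vertices and $G\arrows H$, one shows $G$ has diameter~$2$ and girth~$5$, and then the Hoffman--Singleton theorem applies. The classes $\mathcal L$ and $\mathcal P_3'$ you mention do not play the role you assign them --- they concern $\Lambda$ and $S_3$, both in $\mathcal F_\infty$ and hence already excluded from item~(D). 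Your $n=k+1$ analysis (for every pair $\{u,v\}$, one of $G-u,G-v$ must be $H$; this forces vertex-transitivity or a universal/isolated vertex) is essentially correct and matches Lemma~\ref{k+1}, but the $n\ge k+2$ case needs to be rebuilt around the Deck Lemma and the degree-sequence lemmas rather than symmetry heuristics.
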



\begin{corollary}
Let $H$ be a graph on $k$ vertices. Then

$$ f(H)=\begin{cases}
\infty, & H\in\mathcal{F}_{\infty},\\
k+2, & H\in\{P', \Theta', \overline {P'}, \overline{\Theta'} \}\cup \{M'_\ell, \overline {M'_\ell}: \ell \geq 3, k=2\ell-2 \}, \\
k+1, &  H\in \{T': T'\in \mathcal{T}'\}\setminus\{M'_\ell, \overline {M'_\ell}: \ell \geq 3, k=2\ell-2 \}, \\
k, & otherwise.
\end{cases} $$
\end{corollary}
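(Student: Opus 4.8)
The plan is to obtain this corollary as a direct consequence of Theorem~\ref{main}, after two preliminary observations about $f$. First I would note that $f(H)\ge k$ for every graph $H$ on $k$ vertices: coloring $V(H)$ with exactly $k$ nonempty color classes makes every class a singleton, so the coloring is rainbow and $H$ is its own induced rainbow copy, i.e.\ $H\arrows H$. Second, if $G\in{\mathcal Arrow}(H)$ then $|V(G)|>k$: if $|V(G)|<k$ then $G$ has no induced copy of $H$ at all, while if $|V(G)|=k$ then any $k$-coloring of $G$ is rainbow, so the only possible induced rainbow copy of $H$ is $G$ itself, forcing $G\approx H$, a contradiction. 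Putting these together,
$$f(H)=\begin{cases}k,&\text{if }{\mathcal Arrow}(H)=\emptyset,\\ \max\{|V(G)|:G\in{\mathcal Arrow}(H)\},&\text{otherwise,}\end{cases}$$
with the convention (built into the definition of $f$) that an unbounded set of orders gives $f(H)=\infty$.

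I would then run through the cases of the corollary, in each reading ${\mathcal Arrow}(H)$ off Theorem~\ref{main} and computing the orders of its members; since $G\arrows H$ if and only if $\overline G\arrows\overline H$, and complementation preserves orders, each complementary case follows at once from its twin. If $H\in{\mathcal F}_\infty$, then $f(H)=\infty$ is immediate from Theorem~\ref{A} (equivalently, ${\mathcal Arrow}(H)$ contains one of the unbounded families $\mathcal L$, $\{K_n:n>k\}$, $\{S_n:n>k\}$, $\mathcal C$, $\mathcal P_3$, or a complementary one). If $H\approx P'$, then ${\mathcal Arrow}(P')=\{P\}$ with $|V(P)|=10=|V(P')|+2$, so $f(P')=k+2$; the cases $\Theta'$ (using $|V(\Theta)|=50=|V(\Theta')|+2$), $\overline{P'}$ and $\overline{\Theta'}$ are identical. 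If $H\approx M'_\ell$ with $\ell\ge3$, then $k=|V(M'_\ell)|=2\ell-2$ and ${\mathcal Arrow}(M'_\ell)=\{M_\ell,\,M_{\ell-1}+K_1\}$, whose members have $2\ell$ and $2\ell-1$ vertices, so $f(M'_\ell)=2\ell=k+2$; likewise for $\overline{M'_\ell}$. If $H\approx T'$ with $T'\in{\mathcal T}'$ and $T'\not\approx M'_\ell,\overline{M'_\ell}$ for all $\ell\ge3$, then ${\mathcal Arrow}(T')=\{T\}$ where $T'=T-w$, so $|V(T)|=k+1$ and $f(T')=k+1$. In every remaining case $H,\overline H\notin{\mathcal F}_\infty\cup\{P',\Theta'\}\cup{\mathcal T}'$, and Theorem~\ref{main} gives ${\mathcal Arrow}(H)=\emptyset$, hence $f(H)=k$.

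The only genuine work is checking that the five cases just listed match the four branches of the statement, i.e.\ that the exceptional classes are pairwise disjoint and jointly cover all non-generic $H$. For this I would verify three small facts: (i) ${\mathcal F}_\infty$ and ${\mathcal T}'$ are closed under complementation, so that the ``otherwise'' branch of the corollary is literally the hypothesis of the last item of Theorem~\ref{main}; (ii) $M'_\ell$ and $\overline{M'_\ell}$ really do lie in ${\mathcal T}'$ — for $M'_\ell$ one takes $T=M_{\ell-1}+K_1$, which lies in ${\mathcal T}$ because it has an isolated vertex whose deletion leaves the vertex-transitive graph $M_{\ell-1}$, and then deletes a vertex of degree $1$ — which is precisely why these graphs are pulled out of the $k+1$ branch and placed in the $k+2$ branch; and (iii) $P'$, $\Theta'$ and their complements do \emph{not} lie in ${\mathcal T}'$, which is forced by the internal consistency of Theorem~\ref{main} (were $P'\in{\mathcal T}'$, two of its items would require ${\mathcal Arrow}(P')$ to be both $\{P\}$, with a member of order $10$, and a singleton with a member of order $|V(P')|+1=9$). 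I anticipate no difficulty beyond this bookkeeping and the elementary order computations, since all the substance is already contained in Theorem~\ref{main} and the two observations of the first paragraph.
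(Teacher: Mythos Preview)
Your proposal is correct and is exactly the intended derivation: the paper gives no separate proof of the corollary, treating it as an immediate consequence of Theorem~\ref{main}, and you have carried out precisely that reading-off of orders together with the small bookkeeping checks (closure of $\mathcal{F}_\infty$ and $\mathcal{T}'$ under complementation, $M'_\ell\in\mathcal{T}'$, and $P',\Theta'\notin\mathcal{T}'$) needed to see that the four branches are exhaustive and mutually consistent.
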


\begin{remark} We wish to observe that a
graph $H$ for which $f(H)=k+2$ only occurs for even values of $k$,
$k\geq 4$ and is, up to complementation, uniquely defined by $k$
except in the cases of $k=8$ and $k=48$.    If $k\in\{8,48\}$, then
there are two such complementary pairs of graphs $H$.  We also note that ${\mathcal Arrow}(H)$ is fully classified for every graph $H$ except for $H\in\left\{\Lambda,\overline{\Lambda}\right\}$.
\end{remark}

\newcommand{\F}{\mathcal{F}}
\renewcommand{\labelenumi}{(\arabic{enumi})}
\renewcommand{\theenumi}{(\arabic{enumi})}

This paper is structured as follows: In Section \ref{Defs} we state without proofs all of the lemmas and supplementary results. In Section \ref{Proof}, we prove the main theorem. In Section \ref{LemmaProofs} we prove all the lemmas from Section \ref{Defs}.\\

The main technical tool of the proof is the fact that in most cases
we can assume that the degree sequence of the graph $H$ is
consecutive. Using this, it is possible to show that $f(H)\leq
|V(H)|+c$ for some absolute constant $c$ and for all $H$ such that
$f(H)<\infty$. We prove several additional cited lemmas which
provide a delicate analysis allowing one to get an exact result for
ALL graphs, in particular for ones with small maximum degree.\\

\section{{Definitions, Lemmas and supplementary results}}
\label{Defs}

Let $G$ be a graph on $n$ vertices and $v\in V(G)$.  The degree of $v$ is denoted $\deg(v)$ and the codegree of $v$, $n-1-\deg(v)$,  is denoted $\codeg(v)$. When the choice of a graph is ambiguous, we shall denote the degree of a vertex $v$ in graph $G$ by $\deg(G,v)$.  If vertices $u$ and $v$ are adjacent, we write $u\sim v$, otherwise we write $u\not\sim v$. For subsets of vertices $X$ and $Y$, we
write $X\sim Y$ if $x\sim y$ for all $x\in X$, $y\in Y$; we write $X\not\sim Y$ if $x\not\sim y$ for all $x\in X$, $y\in Y$. For a vertex $x\not\in Y$, we write $x\sim Y$ if $\{x\}\sim Y$ and $x\not\sim Y$ if $\{x\}\not\sim Y$.
For a subset $S$ of vertices of a graph $G$, let $G[S]$ be the subgraph induced by $S$ in $G$.
The neighborhood of a vertex $v$ is denoted $N(v)$, and the closed
neighborhood of $v$,  $N[v]=N(v)\cup \{v\}$.  We shall write $e(G)$
to denote the number of edges in a graph $G$. The subset of vertices
of degree $i$ in a graph $G$ is $G_i$. The minimum and maximum
degrees of a graph $G$ are denoted by $\delta(G)$ and  $\Delta(G)$,
respectively. For all other standard definitions and notations, see
\cite{W}.

We say the degree sequence of a graph $H$ is \textit{consecutive}
if, for every $i\in\{\delta(H),\ldots,\Delta(H)\}$, there exists a
$v\in V(H)$ such that $\deg(v)=i$. The following definition is
important and used throughout the paper.

\begin{definition}
For a graph $H$ on $k$ vertices, let the \textit{{\bf deck}} of $H$,
denoted ${\rm deck}(H)$, be the set of all induced subgraphs of $H$
on $k-1$ vertices. We say that a graph $F$ is in the deck of $H$ if
it is isomorphic to a graph from the deck of $H$. The graph $G$ on
$n$ vertices is said to be \textit{{\bf bounded by}} a graph $H$ on
$k$ vertices  if both $\Delta(G)=\Delta(H)$ and
$\delta(G)=n-k+\delta(H)$.
\end{definition}

For $S\subseteq V(G)$, if  $G[S]\approx H$, we say (to avoid lengthy notation), that
$S$ induces $H$ in $G$ and we shall label the vertices in $S$ as the
corresponding vertices of $H$.

We use the following characterization of regular graphs of diameter
$2$.
\begin{theorem}[Hoffman-Singleton, \cite{HS}]\label{HoffSingThm}
   If $G$ is a diameter 2, girth 5 graph which is $\Delta$-regular, then  $\Delta\in\{2,3,7,57\}$.  Moreover, if $\Delta=2$, $G$ is the $5$-cycle; if $\Delta=3$, then $G$ is the Petersen graph; and if $\Delta=7$, $G$ is the Hoffman-Singleton graph.  It is not known if such a graph exists for $\Delta=57$.
\end{theorem}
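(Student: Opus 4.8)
The plan is to reprove this classical fact by the standard spectral (Moore‑graph) counting argument. \textbf{First}, I would pin down the vertex count. Girth $5$ means $G$ has no triangle and no $4$-cycle, so for every vertex $v$ the $\Delta$ neighbours of $v$ and the $\Delta(\Delta-1)$ vertices at distance exactly $2$ from $v$ are pairwise distinct; since $\mathrm{diam}(G)=2$ these exhaust $V(G)$, whence $n:=|V(G)|=1+\Delta+\Delta(\Delta-1)=\Delta^2+1$. The same hypotheses say that two adjacent vertices have no common neighbour (else a triangle) while two non-adjacent vertices have exactly one common neighbour (at most one, else a $4$-cycle; at least one, by diameter $2$). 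Reading this off the adjacency matrix $A$ of $G$ yields the identity $A^2+A-(\Delta-1)I=J$, where $J$ is the all-ones matrix.

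\textbf{Next}, I would analyse the spectrum. As $\mathrm{diam}(G)=2$ the graph is connected, so by Perron--Frobenius the eigenvalue $\Delta$ of the $\Delta$-regular graph $G$ is simple, with eigenvector $\mathbf{1}$; every other eigenvector lies in $\mathbf{1}^{\perp}=\ker J$, so every eigenvalue $\lambda\ne\Delta$ satisfies $\lambda^2+\lambda-(\Delta-1)=0$, i.e.\ $\lambda\in\{\lambda_+,\lambda_-\}$ with $\lambda_{\pm}=\frac{1}{2}(-1\pm s)$ and $s:=\sqrt{4\Delta-3}$. Writing $m_\pm$ for the multiplicities of $\lambda_\pm$, I would combine $m_++m_-=n-1=\Delta^2$ with $\mathrm{tr}(A)=0$, which reads $\Delta+m_+\lambda_++m_-\lambda_-=0$; eliminating $m_++m_-$ gives the key relation $s\,(m_+-m_-)=\Delta^2-2\Delta$.

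\textbf{Then} comes the arithmetic. If $s$ is irrational, the relation forces $m_+=m_-$ and $\Delta^2-2\Delta=0$, hence $\Delta=2$; and a connected $2$-regular graph of girth $5$ is $C_5$. If $s$ is rational then, since $s^2=4\Delta-3\in\mathbb{Z}$, $s$ is a positive (necessarily odd) integer, $\Delta=(s^2+3)/4$, and $m_+-m_-=\Delta(\Delta-2)/s$ must be a nonnegative integer; substituting $\Delta=(s^2+3)/4$ and clearing denominators, the integrality of $m_+-m_-$ (together with $\gcd(s,16)=1$) forces $s\mid 15$, so $s\in\{1,3,5,15\}$ and correspondingly $\Delta\in\{1,3,7,57\}$. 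Discarding the degenerate $\Delta=1$ leaves $\Delta\in\{2,3,7,57\}$, with the case $\Delta=57$ open.

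\textbf{Finally}, there remain the uniqueness assertions for $\Delta=3$ and $\Delta=7$, which I expect to be the main obstacle, since eigenvalue counting does not by itself pin down the graph. For these one fixes a vertex (or an edge), names the two distance classes, and repeatedly invokes the ``exactly one common neighbour'' property to show that the incidence pattern between the classes is forced up to relabelling, giving the Petersen graph when $\Delta=3$ and the Hoffman--Singleton graph when $\Delta=7$. Since this statement is used in the present paper only as the cited result of \cite{HS}, I would either reproduce this combinatorial reconstruction in the two small cases or simply refer to \cite{HS}.
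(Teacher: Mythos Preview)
Your argument is the standard spectral proof of the Hoffman--Singleton theorem and is correct as written: the counting $n=\Delta^2+1$, the matrix identity $A^2+A-(\Delta-1)I=J$, the eigenvalue analysis, and the divisibility step $s\mid 15$ are all fine, and you correctly flag that the uniqueness assertions for $\Delta=3$ and $\Delta=7$ require a separate combinatorial reconstruction (or a direct appeal to \cite{HS}).

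There is, however, nothing to compare against: the paper does not prove Theorem~\ref{HoffSingThm}. It is stated in Section~\ref{Defs} as a quoted external result from \cite{HS} and is invoked as a black box inside the proof of Lemma~\ref{regular} (Claim~6). So your proposal supplies a proof where the paper simply cites one; the content you give is exactly what a reader would find by following the reference.
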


Note that if a $57$-regular graph of diameter $2$ exists,  it is called a
$(57,2)$-Moore graph.

One of our tools is the following theorem of  Akiyama,  Exoo and
Harary \cite{AEH}, later strengthened by Bos\'ak \cite{B}.
\begin{theorem}[Bos\'ak's theorem] \label{trivial}
   Let $G$ be a graph on $n$ vertices such that all induced
   subgraphs of $G$ on $t$ vertices have the same size.  If
   $2\leq t\leq n-2$ then $G$ is either a complete graph or an empty
   graph.
\end{theorem}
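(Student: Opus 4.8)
The plan is to prove the statement by downward induction on $t$. Concretely, I will show that if $3\le t\le n-2$ and all induced $t$-vertex subgraphs of $G$ have the same size, then all induced $(t-1)$-vertex subgraphs of $G$ also have the same size. Iterating this implication (the range condition $2\le t\le n-2$ is clearly preserved each time $t$ decreases by $1$) eventually reaches $t=2$, at which point the hypothesis says that every pair of vertices is either always adjacent or always non-adjacent, i.e.\ $G\approx K_n$ or $G\approx\overline{K_n}$.

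For the inductive step, let $m$ be the common number of edges of the induced $t$-vertex subgraphs, and write $[\,\cdot\,]$ for the $0/1$ indicator of an event. It suffices to prove the claim that $|N(u)\cap R|=|N(v)\cap R|$ for every $(t-2)$-subset $R\subseteq V(G)$ and all $u,v\in V(G)\setminus R$: indeed, any two $(t-1)$-subsets that differ in a single vertex can be written $R\cup\{u\}$ and $R\cup\{v\}$, so the claim gives $e(G[R\cup\{u\}])=e(G[R\cup\{v\}])$, and since the graph on all $(t-1)$-subsets in which two subsets are joined when they differ in one vertex is connected, $e(G[\,\cdot\,])$ is constant on $(t-1)$-subsets. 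To establish the claim, put $W=V(G)\setminus(R\cup\{u,v\})$, so $|W|=n-t\ge 2$. For every $x\in W$ the sets $R\cup\{u,x\}$ and $R\cup\{v,x\}$ are $t$-subsets, so counting edges in each and equating to $m$ gives $|N(u)\cap R|-|N(v)\cap R|=[\,v\sim x\,]-[\,u\sim x\,]$. Thus the left-hand side is a constant $\delta\in\{-1,0,1\}$ that does not depend on the choice of $x\in W$.

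It remains to rule out $\delta\neq 0$; after possibly interchanging $u$ and $v$ we may assume $\delta=1$, so $v\sim x$ and $u\not\sim x$ for every $x\in W$. Since $t\ge 3$ we may choose $r\in R$ and $w\in W$ and apply the same argument to $R'=(R\setminus\{r\})\cup\{w\}$, which is again a $(t-2)$-set avoiding $u$ and $v$; this produces a constant $\delta'$ with $|N(u)\cap R'|-|N(v)\cap R'|=[\,v\sim x\,]-[\,u\sim x\,]$ for all $x\in V(G)\setminus(R'\cup\{u,v\})=(W\setminus\{w\})\cup\{r\}$, and evaluating at any $x\in W\setminus\{w\}$ (nonempty as $|W|\ge 2$) forces $\delta'=1$. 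On the other hand, since $u\not\sim w$ and $v\sim w$ one has $|N(u)\cap R'|=|N(u)\cap R|-[\,u\sim r\,]$ and $|N(v)\cap R'|=|N(v)\cap R|-[\,v\sim r\,]+1$, whence $\delta'=\delta-1-[\,u\sim r\,]+[\,v\sim r\,]=[\,v\sim r\,]-[\,u\sim r\,]$; together with $\delta'=1$ this forces $v\sim r$ and $u\not\sim r$. As $r\in R$ was arbitrary, $v$ is adjacent to all of $R$ and $u$ to none of it, so $\delta=|N(u)\cap R|-|N(v)\cap R|=0-(t-2)=-(t-2)$, contradicting $\delta=1$ because $t\ge 3$. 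Hence $\delta=0$, which proves the claim and completes the induction.

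The only subtle point is this last step: the first edge-count comparison pins down $\delta$ only up to $\{-1,0,1\}$, and one genuinely needs the second swap — trading an element of $R$ for an element of $W$ — to push a putative discrepancy from $W$ into $R$ and force the numerical contradiction $-(t-2)=1$. The remaining ingredients (connectivity of the single-swap graph on $(t-1)$-subsets and the edge-counting bookkeeping) are routine. One could also note that the hypothesis is invariant under complementation, since $e(G[S])+e(\overline{G}[S])=\binom{|S|}{2}$, but this symmetry does not seem to shorten the proof.
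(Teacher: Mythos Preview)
Your argument is correct. The paper does not supply its own proof of Bos\'ak's theorem: it is quoted as a known result of Akiyama, Exoo and Harary, later strengthened by Bos\'ak, and is used only as a black box (in the proof of Lemma~\ref{consecutive-degrees}). So there is no in-paper proof to compare against.

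For the record, your downward-induction argument is sound. The edge-count identity
\[
e(G[R\cup\{u,x\}])-e(G[R\cup\{v,x\}])=\bigl(|N(u)\cap R|-|N(v)\cap R|\bigr)+\bigl([u\sim x]-[v\sim x]\bigr)
\]
is exactly what you use, the constraints $t\ge 3$ (so $R\ne\emptyset$) and $t\le n-2$ (so $|W|\ge 2$) are precisely what you need to perform the $r\leftrightarrow w$ swap and still have a witness $x\in W\setminus\{w\}$, and the final contradiction $-(t-2)=1$ is clean. The connectivity of the single-vertex-swap graph on $(t-1)$-subsets is standard and correctly invoked. One tiny cosmetic remark: when you write ``as $r\in R$ was arbitrary'' you are implicitly re-running the swap argument with a fixed $w$ but varying $r$; this is fine because the conclusion about $r$ does not depend on which $w\in W$ was chosen.
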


\vskip 0.3cm
In all of the lemmas below we assume that
$$|V(G)|=n, \quad |V(H)|=k, \quad \Delta=\Delta(H), \mbox{ and }  \delta=\delta(H).$$

\begin{lemma}\label{bounded}
   If $G\arrows H$, then the following holds:\\
   \begin{enumerate}
      \vspace{-11pt}\item If $\Delta \leq k-3$, then $\Delta(G)=\Delta$. \label{lem:deg}\\
      \vspace{-11pt}\item If $2\leq\delta\leq\Delta\leq k-3$, then $n\leq k+\Delta-\delta$ with equality iff $\Delta(G)=\delta(G)$. \label{cor:deg}
   \end{enumerate}
\end{lemma}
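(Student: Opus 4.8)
The plan is to prove both parts by exhibiting, for any graph $G$ that violates the conclusion, an explicit $k$-coloring of $V(G)$ with no rainbow induced copy of $H$; this shows $G\notarrows H$, which is the contrapositive. Throughout I will use that a rainbow induced copy of $H$ uses exactly one vertex of each color class, so its vertex set is a transversal of the coloring, and the copy of $H$ is induced by those $k$ vertices.

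For part~\eqref{lem:deg}, suppose $G\arrows H$ but $\Delta(G)\ne\Delta$. First I would dispose of the case $\Delta(G)<\Delta$: then $G$ has no induced copy of $H$ at all (a vertex of degree $\Delta$ in an induced copy would have degree $\ge\Delta$ in $G$), so trivially $G\notarrows H$. So assume $\Delta(G)>\Delta$ and fix a vertex $v$ with $\deg(v)=\Delta(G)\ge\Delta+1$. The idea is to "hide" $v$'s high degree: give $v$ its own color, color the remaining $n-1$ vertices with the other $k-1$ colors so that some color class $A$ is entirely contained in $N(v)$ and has size $\ge2$ (possible since $|N(v)|\ge\Delta+1$ and $n-1$ vertices are being split into $k-1$ classes — here the slack $\Delta\le k-3$ and a counting argument guarantee enough room; I would choose the classes greedily, putting a pair of $N(v)$-vertices together and making every other class a singleton or as balanced as needed). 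Then in any transversal, the chosen representative $w$ of color $A$ is adjacent to $v$, and the representative of $v$'s color is $v$ itself, so in the induced copy the vertex playing $v$'s role has the same neighbor $w$ forced... more precisely: any rainbow set $S$ either avoids $v$, in which case $S\subseteq V(G)\setminus\{v\}$ has only $k-1$ available colors and cannot be rainbow with $k$ colors — contradiction — so $v\in S$; but then the color class $A$ contributes one vertex of $N(v)$, and I arrange the remaining classes so that $\deg(G[S],v) = $ the number of classes meeting $N(v)$, which I can force to be too large (at least $\Delta+1$) or mismatched, contradicting that $G[S]\approx H$. The cleanest version is: make $v$'s color class $\{v\}$ and ensure at least $\Delta+1$ of the other color classes lie inside $N(v)$; then every transversal gives $v$ degree $\ge\Delta+1$ in the induced subgraph. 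This is feasible exactly because $|N(v)|\ge\Delta+1$ and $n-1\ge k-1 > \Delta+1$ leaves room; the condition $\Delta\le k-3$ is what makes the bookkeeping go through.

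For part~\eqref{cor:deg}, assume $2\le\delta\le\Delta\le k-3$ and $G\arrows H$. By part~\eqref{lem:deg} we have $\Delta(G)=\Delta$. Now I apply the complementary argument to control $\delta(G)$: working in $\overline{G}$ and $\overline{H}$ (using $G\arrows H \iff \overline{G}\arrows\overline{H}$ and noting $\Delta(\overline H)=k-1-\delta \le k-3$ when $\delta\ge2$), part~\eqref{lem:deg} applied to $\overline G,\overline H$ gives $\Delta(\overline G) = \Delta(\overline H)$, i.e. $n-1-\delta(G) = k-1-\delta$, hence $\delta(G) = n-k+\delta$. Combining, $G$ is bounded by $H$ in the sense of the definition, and in particular $\Delta(G)-\delta(G) = (\Delta-\delta) - (n-k)$. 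Since $\Delta(G)\ge\delta(G)$, this forces $n-k \le \Delta-\delta$, i.e. $n\le k+\Delta-\delta$, with equality iff $\Delta(G)=\delta(G)$, i.e. $G$ is regular. I should double-check the edge case $\delta=\Delta$ (then $H$ is regular and the claim is $n\le k$ with equality iff $G$ regular), which follows the same way.

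The main obstacle I anticipate is the explicit coloring in part~\eqref{lem:deg}: one must verify that the $n-1$ non-$v$ vertices really can be partitioned into $k-1$ nonempty classes with at least $\Delta+1$ classes contained in $N(v)$, and that this blocks *every* transversal, not just a typical one. The inequality $n-1\ge k-1$ (which holds since $G\arrows H$ forces $n\ge k$) together with $|N(v)|=\Delta(G)\ge\Delta+1$ and the hypothesis $\Delta\le k-3$ should give exactly the slack needed, but the case analysis when $n$ is close to $k$ (so most classes are forced to be singletons) needs care — that is where the constant $3$ in "$\le k-3$" is spent.
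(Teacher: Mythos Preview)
Your proposal is correct and follows essentially the same approach as the paper. For part~\eqref{lem:deg} the paper also gives $v$ its own color and forces at least $\Delta+1$ of the remaining color classes to lie inside $N(v)$ (they phrase it as ``color $N(v)$ with the first $\Delta+1+a$ colors'' for suitable $a$, color $v$ with color $k$, and dump everything else into the leftover colors), so that every transversal contains $v$ together with at least $\Delta+1$ of its neighbors; your ``cleanest version'' is exactly this, and the bookkeeping you flag (needing $\Delta+1\le k-2$ so that at least one spare class remains to absorb non-neighbors of $v$) is precisely where the paper uses $\Delta\le k-3$. For part~\eqref{cor:deg} your argument---apply part~\eqref{lem:deg} to $\overline{G},\overline{H}$ to get $\delta(G)=n-k+\delta$, then compare with $\Delta(G)=\Delta$---is identical to the paper's.
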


\begin{lemma}\label{consecutive-degrees}
  If $H$ is a graph on $k\geq 3$ vertices and $G$ is a graph on $n\geq k+2$ vertices such that $G\arrows H$, then either $H$ or its complement is a star or the degree sequence of $H$ is consecutive.
\end{lemma}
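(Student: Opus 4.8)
The plan is to argue by contraposition: assuming the degree sequence of $H$ is not consecutive and neither $H$ nor $\overline{H}$ is a star, I will show that no graph $G$ on $n \geq k+2$ vertices can arrow $H$. The starting observation is that if the degree sequence of $H$ is not consecutive, then there is an integer $m$ with $\delta(H) < m < \Delta(H)$ such that no vertex of $H$ has degree $m$; equivalently, $V(H)$ splits as $A \dotcup B$ where every vertex of $A$ has degree $\leq m-1$ and every vertex of $B$ has degree $\geq m+1$, with both $A,B$ nonempty. The idea is that such a ``gap'' in the degree sequence forces rigid structure on any rainbow-inducing $G$, and the slack of two extra vertices ($n \geq k+2$) gives us enough room to build a $k$-coloring of $G$ that destroys every rainbow induced copy of $H$.

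First I would use Lemma~\ref{bounded}: since a degree gap in particular means $H$ is not regular, one shows (possibly after replacing $H$ by $\overline H$, using $G \arrows H \iff \overline G \arrows \overline H$) that $G$ must be ``bounded by'' $H$ in the sense defined above, i.e. $\Delta(G) = \Delta(H)$ and $\delta(G) = n - k + \delta(H)$. Actually the cleanest route is to first dispose of the cases where the gap sits next to the extremes — e.g. when $\Delta(H) \leq k-3$ we get $\Delta(G) = \Delta$ directly from Lemma~\ref{bounded}(\ref{lem:deg}), and when $2 \leq \delta \leq \Delta \leq k-3$ part (\ref{cor:deg}) already forces $n \leq k + \Delta - \delta$, which combined with structural analysis constrains $G$ heavily. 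The remaining cases are when $\Delta(H) \geq k-2$ or $\delta(H) \leq 1$; these are handled by passing to the complement (raising $\delta$) or by a direct argument, noting that $\delta(H) = 0$ or $\Delta(H) = k-1$ with a nonconsecutive sequence and $H$ not a star leaves only a short list of configurations to check by hand.

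The core construction, once $G$ is known to be bounded by $H$ with a degree gap at $m$, is to partition $V(G)$ according to degree and use that partition to define a bad $k$-coloring. Because $n \geq k+2$, at least one color class can be made to contain two vertices; I would choose which vertices get merged into a repeated color so that any candidate set $S \subseteq V(G)$ with $|S| = k$ and $G[S] \approx H$ is forced to put two vertices of the same color inside $S$. Concretely, in a copy of $H$ inside $G$ the low-degree vertices of $H$ (the set $A$) must land on low-degree vertices of $G$ and the high-degree vertices (the set $B$) on high-degree vertices of $G$; counting shows the number of available low-degree vertices of $G$ exceeds $|A|$ (or the high side exceeds $|B|$) by at least the surplus $n - k \geq 2$, and by coloring so that two vertices forced to play the ``same role'' in every copy of $H$ share a color, every induced $H$ becomes non-rainbow.

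The main obstacle I anticipate is the bookkeeping at the boundary of the degree gap: one has to rule out pathological $G$ where the partition of $V(G)$ into ``$A$-eligible'' and ``$B$-eligible'' vertices is not clean — for instance, a vertex of $G$ of intermediate degree that could serve in either role depending on the rest of $S$ — and to ensure the merged-color pair genuinely intersects every copy of $H$ rather than just most of them. Handling this will likely require a second application of Lemma~\ref{bounded} together with a counting argument on the edges between the two degree blocks, and possibly invoking Bos\'ak's theorem (Theorem~\ref{trivial}) to exclude the case where $G$ is too homogeneous to have a well-defined degree split. The star exclusion enters precisely here: stars (and their complements) are the graphs where the degree sequence $\{1,\dots,1,k-1\}$ has a gap but the high-degree vertex is unique and dominating, so the ``surplus vertex'' argument collapses — which is why they must be set aside.
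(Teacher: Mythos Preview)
Your overall shape is right --- split $V(H)$ at the gap $m$, try to show $V(G)$ splits compatibly, then use the $n-k\geq 2$ surplus to overload one side with colors --- but the proposal has a genuine gap at exactly the point you flag as ``the main obstacle.''

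You plan to establish the clean split of $V(G)$ by first invoking Lemma~\ref{bounded} to show $G$ is bounded by $H$, then doing edge-counting between the blocks, with Bos\'ak's theorem in reserve. This does not work. Lemma~\ref{bounded} only pins down $\Delta(G)$ and $\delta(G)$; it says nothing about vertices of $G$ whose degree sits in the middle range, and those are precisely the vertices that could serve as either an $A$-vertex or a $B$-vertex depending on which $k$-set $S$ you choose. No amount of edge-counting between blocks gets you past this, because until you have ruled out intermediate degrees there \emph{are} no well-defined blocks. Your detour through Lemma~\ref{bounded} also manufactures the side cases $\delta\leq 1$ and $\Delta\geq k-2$ that you then have to treat separately; the paper's argument needs no such split.

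The missing idea is a single direct coloring. Suppose some $v\in V(G)$ has $i\leq \deg_G(v)\leq n-k+i$. Give $v$ its own color, color $N(v)$ with exactly $i$ further colors (possible since $|N(v)|\geq i$), and color $V(G)\setminus N[v]$ with the remaining $k-i-1$ colors (possible since $|V(G)\setminus N[v]|=n-1-\deg(v)\geq k-i-1$). Any rainbow $k$-set under this coloring contains $v$ together with exactly $i$ of its neighbors, so $v$ has degree exactly $i$ in that induced subgraph --- impossible if $G[S]\approx H$. This one paragraph replaces everything you propose for the ``intermediate degree'' obstacle and gives the partition $V(G)=L_i(G)\cup U_i(G)$ directly, with $L_i(G)=\{v:\deg_G(v)<i\}$ and $U_i(G)=\{v:\deg_G(v)>n-k+i\}$.

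Once that is in hand, your endgame is essentially the paper's: one checks $U_i(H)\subseteq U_i(G)$ and $L_i(H)\subseteq L_i(G)$ by degree/codegree counting, then since $n\geq k+2$ one side has surplus and can be colored with one extra color. Bos\'ak's theorem is used only in the narrow case $|U_i(H)|=|U_i(G)|=1$ (or its complement), and that is exactly where the star exception emerges --- your instinct about where stars enter is correct.
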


The Deck Lemma is an important auxiliary lemma that is used
throughout this paper.
\begin{lemma}[Deck lemma]\label{deck}
   Let $G\arrows H$.  For any set $U\subset V(G)$ with $|U|=k-1$, $G[U]$ is in the deck of $H$.  Consequently, $e(H)-\Delta\leq e(G[U])\leq e(H)-\delta$.
\end{lemma}

\begin{lemma}\label{Delta-delta}
   If $f(H)>k$ and $H$ has consecutive degrees, then $\Delta\leq\delta+3$.
\end{lemma}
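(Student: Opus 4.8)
The hypothesis is $f(H)>k$, so there is a graph $G$ with $G\arrows H$ and $|V(G)|=n$ for some $n>k$; in fact $n$ can be taken arbitrarily large if $f(H)=\infty$, but all we need is one $n\ge k+1$, and actually the cleanest choice is an $n$ that is as large as we like relative to $k$, or at least $n\ge k+2$ so that Lemma \ref{consecutive-degrees} and the structural lemmas apply cleanly. Since $H$ has consecutive degrees by assumption, $H$ is not a star and neither is $\overline H$ (a star does not have consecutive degrees once $k\ge 4$; the small cases $k=3$ can be checked directly), so the exceptional cases of the earlier lemmas are not in play. The plan is to argue by contradiction: suppose $\Delta\ge\delta+4$.

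\textbf{First step.} I would use the Deck Lemma (Lemma \ref{deck}): for every $(k-1)$-subset $U\subseteq V(G)$, the induced subgraph $G[U]$ lies in the deck of $H$, and consequently $e(H)-\Delta\le e(G[U])\le e(H)-\delta$. Thus any two induced subgraphs of $G$ on $k-1$ vertices differ in size by at most $\Delta-\delta$. I would then also apply Lemma \ref{bounded} (or re-derive the needed piece): if $\Delta\le k-3$ then $\Delta(G)=\Delta$, and with $2\le\delta\le\Delta\le k-3$ we get $n\le k+\Delta-\delta$ with equality iff $G$ is regular. The two regimes to handle are therefore (i) $\Delta$ or $\delta$ close to the extreme values $k-1$ or $0$ — i.e.\ $H$ or $\overline H$ has a near-universal or near-isolated vertex — where Lemma \ref{bounded} does not directly apply, and (ii) the generic regime $2\le\delta\le\Delta\le k-3$.

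\textbf{Second step (generic regime).} Here $n\le k+\Delta-\delta$. If $\Delta-\delta\ge 4$ this still allows $n$ up to $k+4$ or more, so the bound alone is not a contradiction; I need to squeeze harder. The idea is to look at how the edge count $e(G[U])$ varies as we swap one vertex of $U$ for a vertex outside. Replacing $u\in U$ by $w\notin U$ changes the edge count by $\deg_{G[U\cup\{w\}\setminus\{u\}]}(w)-\deg_{G[U]}(u)$, a quantity lying strictly between $-(\Delta+1)$ and $\Delta+1$ but forced by the Deck Lemma to keep $e(G[U])$ inside a window of width $\Delta-\delta$. By a connectivity/averaging argument on the "swap graph" of $(k-1)$-subsets, one shows that if the degrees of $G$ themselves spread too much, or if $n$ is too large, then some swap must jump the edge count out of the allowed window — contradiction. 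Concretely, I expect to show that $G$ must be close to regular (degrees confined to an interval of length about $\Delta-\delta$), and then count: summing $e(G[U])$ over all $U$, or equivalently using a double-counting identity relating $e(G)$, $n$, and the common-size constraint, to force $\Delta-\delta\le 3$. This is where Bos\'ak's theorem (Theorem \ref{trivial}) plays its role as a limiting case: if all $(k-1)$-subsets had \emph{exactly} the same size $G$ would be complete or empty, contradicting that $H$ has consecutive degrees with $\Delta>\delta$; so the sizes genuinely spread, and controlling that spread against the width $\Delta-\delta$ is the crux.

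\textbf{Third step (extreme regime) and the main obstacle.} When $H$ has a dominating or isolated vertex, I would peel it off: if $v$ is isolated in $H$ then by the Deck Lemma every $(k-1)$-subset of $G$ either induces $H-v$ or... — more carefully, one shows $G$ itself has a vertex behaving like $v$ (an isolated or universal vertex), reduce to $G-v'\arrows H-v$ on $k-1$ vertices (this reduction is essentially how $\mathcal T$ and the "$T-v$ vertex-transitive" clause arise), and induct. The complementation symmetry $G\arrows H\iff\overline G\arrows\overline H$ halves the casework between the isolated-vertex and dominating-vertex subcases. \textbf{I expect the main obstacle to be the generic-regime edge-counting argument}: turning the qualitative fact "all $(k-1)$-subsets have nearly equal size, with spread at most $\Delta-\delta$" into the sharp quantitative conclusion $\Delta-\delta\le 3$ requires choosing the swaps cleverly and bounding the extreme degrees of $G$, and getting the constant exactly $3$ (rather than, say, $4$ or $5$) is delicate — presumably one exhibits, when $\Delta-\delta=4$, two specific $(k-1)$-subsets whose sizes must differ by $5$, or builds an explicit $k$-coloring of every candidate $G$ that avoids a rainbow induced $H$, thereby showing $G\notarrows H$ after all.
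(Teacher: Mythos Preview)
Your proposal has a genuine gap at exactly the point you flag as the main obstacle: the ``generic-regime edge-counting argument'' in your second step is a wish list (``I expect to show'', ``presumably one exhibits''), and none of the suggested tools---swap-graph connectivity, double counting, Bos\'ak's theorem---are actually shown to produce the sharp constant~$3$. Bos\'ak's theorem in particular is a red herring here: it says the $(k-1)$-subset sizes cannot all be \emph{equal}, but gives no handle on bounding their spread. Your third step is also shaky: the reduction ``$G-v'\arrows H-v$'' is not established, and $H-v$ need not inherit consecutive degrees, so the induction does not close.

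The paper's proof is much shorter and needs no regime split, no boundedness hypothesis on $G$, and no induction. The device you are missing is the auxiliary Lemma~\ref{neighborhood-into-S}. Fix $S\subseteq V(G)$ with $G[S]\approx H$ and order $y_1,\dots,y_k$ by degree in $G[S]$. Part~\ref{DELdel} of that lemma---obtained by giving $y_1$ and $y_k$ the same color, rainbow-coloring $S\setminus\{y_1,y_k\}$, and giving all of $V(G)\setminus S$ the last color---produces a vertex $v\notin S$ such that either $\{v\}\cup S\setminus\{y_k\}$ or $\{v\}\cup S\setminus\{y_1\}$ induces $H$. In the first case an edge count forces $|N(v)\cap(S\setminus\{y_k\})|=\Delta$, whence $|N(v)\cap(S\setminus\{y_1,y_2\})|\ge\Delta-2$; but part~\ref{del} (a Deck-Lemma edge count on $\{v\}\cup S\setminus\{y_1,y_2\}$) gives $|N(v)\cap(S\setminus\{y_1,y_2\})|\le\delta+1$, so $\Delta-2\le\delta+1$. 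The second case is symmetric via part~\ref{DEL}. That is the whole proof: one specific coloring to manufacture the swap vertex, then one Deck-Lemma bound on the $(k-1)$-set obtained by dropping the two \emph{opposite}-extreme vertices. Your swap intuition is in the right direction, but you never identify which swap to make or which $(k-1)$-set to bound against it.
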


Observe that Lemmas \ref{bounded}, \ref{consecutive-degrees} and
\ref{Delta-delta} immediately imply that $f(H)\leq |V(H)|+3$ if
$2\leq \delta\leq \Delta\leq k-3$. The remaining lemmas allow us to
deal with the cases where $\delta<2$ or $\Delta>k-3$ and to prove
exact results.

Lemmas \ref{infinity} and \ref{k+1}  address the cases where $f(H)=\infty$ and
$f(H)=k+1$.

\begin{lemma}\label{infinity}
$~~{\mathcal Arrow}(K_k)=
\begin{cases}
{\mathcal C},&k=2,\\
\{K_n: n> k\},&k \geq 3;
\end{cases} $ \hskip 20pt
$~~ {\mathcal Arrow}(S_k)=
\begin{cases}
\mathcal{P}_3,& k=3,\\
\{S_n: n> k\}, &k\geq 4.
\end{cases}$
\end{lemma}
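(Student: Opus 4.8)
\noindent\emph{Proof plan.} I would split the two assertions along the four graphs that really occur---$K_2$, $K_k$ with $k\geq 3$, $S_k$ with $k\geq 4$, and $S_3=P_3$---and keep $|V(G)|=n\geq k$ as a standing assumption, since a smaller $G$ contains no copy of the target. For the three cases other than $P_3$ the engine is the Deck Lemma (Lemma~\ref{deck}): if $G\arrows H$ then every $(k-1)$-element subset of $V(G)$ induces a member of ${\rm deck}(H)$, and here ${\rm deck}(K_k)=\{K_{k-1}\}$ and ${\rm deck}(S_k)=\{S_{k-1},E_{k-1}\}$ are extremely rigid. The case $K_2$ is separate and immediate: $G\arrows K_2$ just says that every partition of $V(G)$ into two nonempty classes has a crossing edge, i.e. that $G$ is connected; discarding $K_2$ itself leaves exactly the connected graphs on at least three vertices, which is $\mathcal{C}$.

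For $K_k$ with $k\geq 3$ I would first note that $n=k$ would force $G\approx K_k$ (via the all-distinct coloring), which is excluded, so $n\geq k+1$; then the Deck Lemma makes every $(k-1)$-subset of $V(G)$ a clique, and since $n-2\geq k-1$ any two vertices lie together in some $(k-1)$-subset, whence $G$ is complete and $G=K_n$ with $n>k$. The reverse inclusion, $K_n\arrows K_k$ for $n>k$, is clear: take one vertex per color class. For $S_k$ with $k\geq 4$ the reverse inclusion $S_n\arrows S_k$ ($n>k$) is again easy---the center of $S_n$ receives one color, the remaining $k-1$ colors all occur on leaves, and the center together with one leaf of each induces a rainbow $S_k$. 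For the forward direction, let $G\arrows S_k$, $G\not\approx S_k$. Since $k-1\geq 3$, the Deck Lemma forces every $3$-subset of $V(G)$ to induce a $3$-vertex induced subgraph of $S_{k-1}$ or of $E_{k-1}$, that is $P_3$ or $E_3$; equivalently, no three vertices of $G$ span exactly one or exactly three edges. A short argument, splitting on the neighbourhood of a single edge, shows such a graph is either $E_n$ or a complete bipartite graph $K_{s,t}$. An edgeless graph has no $S_k$, so $G=K_{s,t}$, and containing $S_k$ forces the larger side to have at least $k-1$ vertices. If $s=1$ then $G=S_{t+1}$, which arrows $S_k$ exactly when $t+1>k$. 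If $s\geq 2$ I would exhibit a $k$-coloring in which each side of the bipartition uses at most $k-2$ colors---with one minor subcase according to whether $s\leq k-2$ or $s\geq k-1$, both using $k\geq 4$---so that no vertex can be the center of a rainbow $S_k$, since its foreign side then offers fewer than $k-1$ colors; hence $K_{s,t}\notarrows S_k$. Altogether ${\mathcal Arrow}(S_k)=\{S_n:n>k\}$.

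For $S_3=P_3$ the deck gives nothing, so I would argue directly that ``$G\arrows P_3$ and $G\not\approx P_3$'' is equivalent to ``$G$ has at least four vertices and $G\notin\mathcal{P}'_3$'', which (after inspecting the few graphs on at most three vertices) reduces to $G\notarrows P_3\iff G\in\mathcal{P}'_3$. The easy direction: given a partition $V=V_1\cup V_2\cup V_3$ witnessing membership in $\mathcal{P}'_3$, color $V_i$ with $i$; a rainbow induced $P_3$ would use one vertex of each part, its two edges are cross-class and hence lie in a single component of the tripartite subgraph, which must then span all three parts and so be complete tripartite, forcing the two non-central vertices of the $P_3$ to be adjacent---impossible. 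The harder direction: from a $3$-coloring $V_1\cup V_2\cup V_3$ with no rainbow induced $P_3$, I would take a connected component $C$ of the tripartite subgraph meeting all three classes and show it is complete tripartite. The engine is the observation that \emph{if a vertex $a$ is adjacent to vertices $b,c$ carrying the two colors different from $a$'s, then $b\sim c$} (else $\{a,b,c\}$ induces a rainbow $P_3$). Given non-adjacent $u,v\in C$ of colors $1$ and $2$, a shortest $u$--$v$ path in $C$ must have colors alternating between $1$ and $2$ only (else a consecutive triple is a rainbow induced $P_3$), and likewise $u$ has a neighbour $q\in C$ of color $3$; propagating the observation step by step along that path, against the fixed vertex $q$, shows every path vertex---in particular $v$---is adjacent to $q$, and then the observation applied at $q$ yields $u\sim v$, a contradiction. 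Hence $C$ is complete tripartite, so $G\in\mathcal{P}'_3$ and ${\mathcal Arrow}(P_3)=\mathcal{P}_3$.

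The $K_2$, $K_k$ and $S_k$ ($k\geq 4$) parts are, respectively, a connectivity observation, a one-line deck computation, and a structure lemma followed by routine coloring constructions. The step I expect to be the real obstacle is the forward direction for $P_3$: distilling the precise tripartite structure of $\mathcal{P}'_3$ out of the bare hypothesis ``no rainbow induced $P_3$'', for which there is no deck-type shortcut and which needs the propagation-along-a-shortest-path argument against a fixed third-color neighbour.
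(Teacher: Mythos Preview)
Your proposal is correct. For $K_2$, $K_k$ with $k\geq 3$, and $S_k$ with $k\geq 4$ you follow essentially the paper's route: the same connectivity observation for $K_2$; the same Deck Lemma argument for $K_k$; and for $S_k$ the paper also deduces from the deck that $G$ has no induced $K_3$ and no induced $K_2+K_1$, then passes through the complement (no induced $P_3$ in $\overline{G}$, hence $\overline{G}$ is a disjoint union of cliques) to conclude $G$ is complete bipartite, and finishes with the same ``two colors on each side'' coloring. Your direct neighbourhood-splitting argument and your ``each side sees at most $k-2$ colors'' formulation are equivalent rephrasings.

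The one genuine difference is the hard direction for $P_3$. The paper argues by \emph{growing a maximal complete tripartite subgraph}: inside a component $Q$ of the tripartite subgraph meeting all three parts, it first produces a rainbow triangle (hence a nonempty complete tripartite $Q'\subseteq Q$), then shows that any $v\in Q\setminus Q'$ adjacent to $Q'$ must in fact be adjacent to all of $Q'$ in the other two parts, contradicting maximality. You instead run a \emph{shortest-path propagation}: the induced-$P_3$ constraint forces colors to alternate along any shortest path, which both supplies the needed color-$3$ neighbour $q$ of $u$ and lets you push adjacency to $q$ vertex by vertex down the $u$--$v$ path until $q\sim v$, whence $u\sim v$. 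Both arguments are short and rely on the same local observation (a rainbow $P_3$ forces the missing edge), but yours avoids the maximality device and gives a slightly more explicit reason why every component meeting all three parts has edges between every pair of color classes.
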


\begin{lemma}\label{k+1}
$\{(G,H): G\arrows H, |V(G)|=k+1\} =\{(T,T') : T\in
\mathcal{T}\}$.
\end{lemma}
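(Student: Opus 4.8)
Throughout put $|V(G)|=k+1$. Everything rests on one elementary observation: in any $k$-coloring of a $(k+1)$-vertex graph $G$ exactly one color class has two vertices, say $a,b$, the rest being singletons, so the only rainbow $k$-subsets of $V(G)$ are $V(G)\setminus\{a\}$ and $V(G)\setminus\{b\}$. Since a copy of $H$ has exactly $k$ vertices, this yields the equivalence $(\star)$: \emph{$G\arrows H$ if and only if at most one vertex $v\in V(G)$ satisfies $G-v\not\approx H$}. (The ``if'' part is clear; for ``only if'', given any pair $\{a,b\}$ color $a,b$ alike and the remaining vertices with distinct new colors.) So the whole lemma is about when ``almost all cards of $G$ equal $H$'', with $|V(G)|=|V(H)|+1$.

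\textbf{The inclusion $\{(T,T'):T\in\mathcal T\}\subseteq\{(G,H):G\arrows H,\ |V(G)|=k+1\}$.} Here $|V(T)|=|V(T')|+1$, so fix $T\in\mathcal T$ and check $(\star)$ with $H=T'$. If $T$ is vertex-transitive it is $d$-regular with $1\le d\le|V(T)|-2$ (being non-trivial by (a)), hence every vertex is non-extreme, all cards of $T$ are mutually isomorphic, and $T-v\approx T'$ for every $v$: no card fails. Otherwise $T$ has a vertex $v_0$ of degree $0$ or $|V(T)|-1$ with $T-v_0$ vertex-transitive; replacing $T$ by $\overline T$ if necessary (legitimate since $G\arrows H\iff\overline G\arrows\overline H$ and $\mathcal T$ is closed under complementation), assume $v_0$ is isolated. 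Condition (a) forces $T-v_0$ to be non-trivial (otherwise $\overline T$ is a star or $T$ is empty), hence regular of positive degree and so without an isolated vertex. Writing $T'=(T-v_0-w)+K_1$ for the deleted non-extreme vertex $w\ (\ne v_0)$, vertex-transitivity of $T-v_0$ gives $T-v=(T-v_0-v)+K_1\approx T'$ for every $v\ne v_0$, whereas $T-v_0\not\approx T'$ because $T'$ has an isolated vertex and $T-v_0$ does not. Exactly one card fails, so $T\arrows T'$ by $(\star)$.

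\textbf{The reverse inclusion.} Let $G\arrows H$ with $|V(G)|=k+1$. By $(\star)$ there is a vertex $v_0$ with $G-v\approx H$ for all $v$ in $A:=V(G)\setminus\{v_0\}$ (take $v_0$ arbitrary if no card fails), so $|A|=k$; and since $e(G-v)=e(G)-\deg(v)=e(H)$ for $v\in A$, all vertices of $A$ share a common degree $d$. The degree multiset of $G-v$ ($v\in A$) depends only on whether $v_0\sim v$: it consists of $\deg(v_0)-[v_0\sim v]$, together with $d-[v_0\sim v]$ copies of $d-1$ and $k-1-d+[v_0\sim v]$ copies of $d$. If $\deg(v_0)\ne d$, then after a short check of the boundary cases $\deg(v_0)\in\{d-1,d+1\}$ the multisets for $[v_0\sim v]=0$ and $=1$ differ, so $[v_0\sim v]$ is constant over $v\in A$; hence $v_0$ is adjacent to all of $A$ or to none, i.e.\ $\deg(v_0)\in\{0,k\}$. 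Thus either (i) $G$ is $d$-regular, or (ii) $v_0$ is isolated or universal. In case (ii), after complementation $G=F+K_1$ with $v_0$ the isolated vertex and $F:=G-v_0$ a $d$-regular graph on $k$ vertices, each of whose cards $F-w$ ($w\in A=V(F)$) is isomorphic to the fixed graph obtained from $H$ by deleting an isolated vertex. In each case, if the pertinent regular graph ($G$ in (i), $F$ in (ii)) were complete or empty then $H$ would lie in $\mathcal F_\infty$, and such pairs are exactly the ones classified in Lemma~\ref{infinity}; so we may assume it is non-trivial. (This is where the statement must implicitly exclude those $H$, such as $H=K_k$, whose $(k+1)$-vertex witness is itself complete, empty or a star and hence not in $\mathcal T$.)

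\textbf{The structural core, and the main obstacle.} It remains to prove: \emph{a non-trivial regular graph $R$ on at most $k+1$ vertices whose vertex-deleted subgraphs are, with at most one exception, mutually isomorphic, is vertex-transitive.} Granting this: in case (i) $G=R$ is vertex-transitive, hence in $\mathcal T$, all its cards agree, and $H\approx G-v=G'$; in case (ii) $F=R$ is vertex-transitive, so $G=F+K_1\in\mathcal T$ through the isolated vertex $v_0$, and $H\approx G-w=G'$ for the non-extreme vertices $w\in V(F)$. Either way $(G,H)=(T,T')$ with $T:=G\in\mathcal T$, finishing the proof. Establishing the italicized statement is the hard part. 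I would first upgrade ``at most one exceptional card'' to ``all cards isomorphic'' by showing $G-v_0\approx H$ as well (it has the same order, size and degree sequence as $H$, and a careful count of cards should pin down its isomorphism type). Then, representing $R$ as the common card $J$ with one extra vertex attached along a neighborhood $S_v$, for each $v\in V(R)$, the isomorphisms $R-v\to J$ --- chosen compatibly so that the attached vertices correspond --- should convert the identities ``$J$ with $S_v$ attached $\cong R\cong J$ with $S_w$ attached'' into automorphisms of $R$ carrying $v$ to $w$; indeed once an isomorphism $R-v\to R-w$ of this type is known to map $N_R(v)$ onto $N_R(w)$, it extends to an automorphism. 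Arranging that compatibility is the delicate point. I expect it to require exploiting the bound $|V(R)|\le k+1$ together with Bos\'ak's theorem (applied to $J$, to rule out its $(k-2)$-vertex induced subgraphs all having the same size and hence to constrain the structure of $J$), followed by a case analysis of the resulting highly structured graphs. This vertex-transitivity claim is the main obstacle.
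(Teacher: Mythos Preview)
Your setup through $(\star)$, the forward inclusion, and the degree-sequence dichotomy leading to cases (i) and (ii) is correct and matches the paper's argument closely. You also correctly flag the implicit exclusion of $H\in\mathcal F_\infty$ in the lemma's statement.

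The genuine gap is that you massively overestimate the difficulty of the ``structural core''. It needs neither Bos\'ak's theorem nor any case analysis. The key observation you are missing is this: if $R$ is $d$-regular, then in $R-v$ the vertices of degree $d-1$ are \emph{precisely} $N_R(v)$. Hence any isomorphism $\varphi:R-v\to R-w$ must carry $N_R(v)$ onto $N_R(w)$ automatically, and therefore extends to an automorphism of $R$ sending $v\mapsto w$. There is nothing delicate to ``arrange''; the compatibility you worry about is forced by the degree sequence. This single observation handles both of your remaining tasks at once. In case (i), pick $w\sim v_0$ and $w'\not\sim v_0$ (possible since $0<d<k$); the isomorphism $G-w\to G-w'$ maps $v_0$ into $N(w')$, hence to some $x\ne v_0$, and the extended automorphism gives $G-v_0\approx G-x\approx H$, upgrading to ``all cards isomorphic''; then the same extension trick applied to arbitrary $u,v$ gives vertex-transitivity. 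In case (ii), all cards of $F$ are already isomorphic (since $(F-v)+K_1\approx H\approx (F-w)+K_1$ implies $F-v\approx F-w$), and the extension trick gives $F$ vertex-transitive directly. This is exactly the paper's route, dispatched in a few lines per case.
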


Lemmas \ref{regular} and \ref{tool} allow us to deal with the case where $n\geq
k+2$ and $G$ is regular or almost regular.
\begin{lemma}\label{regular}
Assume that  $k\geq 3$. Let $Q$ be the set of pairs $(G,H)$ such that  $|V(G)|\geq k+2$, $G\arrows H$, $G$ is
bounded by $H$,  $H\not\in \mathcal{F}_{\infty}$,  $H$ has
consecutive degrees and $G$ is $d$-regular for some $d\geq 2$. Then  $Q=\left\{(P,P'), (\overline{P},\overline{P'}), (\Theta,\Theta'), (\overline{\Theta},\overline{\Theta'})\right\}$.
\end{lemma}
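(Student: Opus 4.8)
The plan is to exploit the rigidity forced by $G \arrows H$ together with the structural hypotheses. Let $(G,H)$ be a pair in $Q$, so $G$ is $d$-regular on $n \geq k+2$ vertices, $d\geq 2$, $G$ is bounded by $H$, $H$ has consecutive degrees, and $H\notin\mathcal{F}_\infty$. Since $G$ is $d$-regular and bounded by $H$, we have $\Delta(H)=\delta(H)=d$ when... no — rather, boundedness gives $\Delta(G)=\Delta(H)$ and $\delta(G)=n-k+\delta(H)$; since $\Delta(G)=\delta(G)=d$, this forces $\Delta(H)=d$ and $\delta(H)=k-n+d$. Combined with Lemma \ref{Delta-delta} (applicable because $G\arrows H$ with $n>k$ gives $f(H)>k$, and $H$ has consecutive degrees), we get $\Delta(H)-\delta(H)\leq 3$, i.e. $n-k\leq 3$, so $n\in\{k+2,k+3\}$; a short separate argument (using Lemma \ref{bounded} and parity/counting of edges via the Deck Lemma) should eliminate $n=k+3$ or at least restrict it, but in any case we reduce to a bounded gap between $\Delta$ and $\delta$. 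The Deck Lemma then pins down the number of edges in every $(k-1)$-vertex induced subgraph of $G$ to one of at most a few values ($e(H)-\Delta,\dots,e(H)-\delta$), and since $G$ is regular, these subgraphs are obtained by deleting a vertex and its $d$ incident edges, so in fact every such subgraph has \emph{exactly} $e(G)-d$ edges. That uniformity is the entry point for Bos\'ak's theorem (Theorem \ref{trivial}): with $t=k-1$ and $k-1\leq n-2$ (which holds since $n\geq k+2$, so we need $n\geq k+1$ — fine), all induced subgraphs on $k-1$ vertices have the same size, hence $G$ is complete or empty. But then $H$, being in the deck of such a $G$, is complete or empty, contradicting $H\notin\mathcal{F}_\infty$ — unless the size is not genuinely constant, i.e. unless the deck of $H$ is \emph{not} a single isomorphism type even though the edge counts coincide. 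So the real content is that $H$ itself must have a deck all of whose members have $e(H)-d$ edges; translating back, $H$ is $d'$-regular for some $d'$, or $H$ is $d$-regular plus/minus one special vertex.

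Next I would use the regularity of $G$ more sharply. Because $G$ is $d$-regular and $G\arrows H$, and because removing any vertex must land in $\mathrm{deck}(H)$, the graph $H$ must be, up to one exceptional vertex, $d$-regular (the consecutive-degree hypothesis plus $\Delta-\delta\leq 3$ together with regularity of $G$ should actually force $H$ to be $d$-regular, or $d$-regular with one added isolated/universal vertex — but the latter puts $H$ or $\overline H$ close to a star or forces structure already covered). Assume $H$ is $d$-regular on $k=n-2$ vertices and $G$ is $d$-regular on $n$ vertices with $G\arrows H$. Now I would bring in the \emph{girth} and \emph{diameter}: the key observation is that $G\arrows H$ for $H$ a proper induced subgraph forces $G$ to have no large "repetitive" structure — concretely, one shows that $G$ cannot contain two vertices $u,v$ at distance $\leq 2$ with the same neighborhood pattern relative to the rest, or more precisely, one shows $G$ has diameter $2$ and girth $5$. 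The diameter-$2$ claim comes from: if $\mathrm{dist}(u,v)\geq 3$, color $u$ and $v$ the same and extend to a $k$-coloring avoiding a rainbow $H$, using that $H$ is connected (or handling the disconnected case — here $H=P'$ or $\Theta'$ is connected, and the complement cases are handled by the observation $G\arrows H \iff \overline G\arrows\overline H$, reducing to $H\in\{P',\Theta'\}$). The girth-$5$ claim: triangles or $4$-cycles in $G$ would let one recolor to kill all rainbow copies of $H$, since $P'$ and $\Theta'$ have girth $5$. Given $G$ is $d$-regular, diameter $2$, girth $5$, Theorem \ref{HoffSingThm} (Hoffman–Singleton) forces $d\in\{2,3,7,57\}$ and $G\in\{C_5, P, \Theta, (57,2)\text{-Moore}\}$.

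It remains to (i) rule out $C_5$ (here $d=2$: $C_5\arrows H$ would need $H$ on $3$ vertices, $2$-regular, i.e. $H=C_3=K_3\in\mathcal{F}_\infty$ — excluded — or check directly that $C_5$ arrows nothing appropriate), (ii) rule out the hypothetical $(57,2)$-Moore graph — here I would argue that $H=\Theta'$-analogue would be a specific graph on $k$ vertices and one must check $G\arrows H$ actually fails, OR note this is precisely why the statement of $Q$ lists only $P,\Theta$; if the $57$-Moore graph existed, the corresponding pair might need to be included, so I should double-check whether the lemma implicitly assumes nonexistence or whether there is a direct argument — I suspect a direct counting argument shows even if it exists it does not arrow its "two-deleted-vertices" graph, or the paper quietly restricts to known Moore graphs — and (iii) verify the \emph{positive} direction: $P\arrows P'$ and $\Theta\arrows\Theta'$. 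For (iii), the argument is: in any $k$-coloring of $P$ (where $|V(P')|=k=8$, $|V(P)|=10$), some color class has $\geq 2$ vertices or there are two vertices sharing a color; deleting a pair of color-repeated vertices, if they are nonadjacent, yields an induced $P'$ that is rainbow — one must check that among the $\binom{10}{2}$ pairs, the nonadjacent ones all induce the \emph{same} graph $P'$ (this is exactly the edge-transitivity of $\overline P$ promised in the excerpt), and that one can always find a nonadjacent monochromatic-or-distinct... more carefully: with $8$ colors on $10$ vertices, by pigeonhole there exist two vertices with the same color; if \emph{every} monochromatic pair were adjacent, the color classes of size $\geq 2$ would be cliques, but $P$ is triangle-free so each such class has size exactly $2$ and is an edge — then $10$ vertices, $8$ colors, so exactly two classes are edges, i.e. we have a perfect... no, $2+2+1\cdot 6 = 10$, $8$ classes: two edges plus six singletons. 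Deleting one such edge's two vertices gives $8$ vertices all distinctly colored — but they induce $P$ minus an edge, not $P'$. So I need instead to delete a nonadjacent monochromatic pair; if none exists, I recolor: swap one vertex of a monochromatic edge to a fresh... but there are no fresh colors. The correct move is more subtle and is the crux: one shows that for \emph{every} $8$-coloring of $P$ there is a rainbow induced $P'$, using the strong symmetry of $P$ (vertex- and arc-transitivity) and the fact that any $8$ vertices whose induced subgraph is $P'$ must be colored rainbow \emph{or} we can find another $8$-set. The hard part, which I expect to be the main obstacle, is exactly this verification that $P\arrows P'$ and $\Theta\arrows\Theta'$ (the $\supseteq$ direction of $Q$) together with cleanly excluding the $57$-Moore graph — the $\subseteq$ direction, by contrast, follows the structured pipeline Deck Lemma $\to$ Bos\'ak/regularity $\to$ forbidden short cycles $\to$ Hoffman–Singleton sketched above.
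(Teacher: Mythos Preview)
Your overall strategy --- force diameter $2$ and girth $5$, then invoke Hoffman--Singleton --- is the paper's. But several steps are missing or wrong. The Bos\'ak detour fails outright: regularity of $G$ makes every $(n-1)$-vertex induced subgraph equisized, not every $(k-1)$-vertex one, and since $n\geq k+2$ these differ; nothing in your sketch replaces this. You also leave $n=k+3$ unhandled; the paper kills it by noting that inside a $\Delta$-regular $G$ an induced copy of $H$ has vertices of $G[S]$-degrees $\Delta,\Delta-1,\Delta-2,\Delta-3$, which together with the three outside vertices force both a $C_4$ and a $\overline{C_4}$ in $G$, and monochromatic colorings of these four-sets yield the incompatible bounds $e(H)\geq e(G)-3\Delta+2$ and $e(H)\leq e(G)-3\Delta+1$. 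The substantive idea you are missing, however, is that in the $n=k+2$ case $G$ must be \emph{vertex-transitive}: the paper shows $G-u-v\approx H$ for \emph{every} nonadjacent pair $u,v$ (color $u$, $v$, and their unique common neighbor with one color), then promotes these isomorphisms to automorphisms of $G$ using the induced matching between $N(x)\setminus\{v\}$ and $N(y)\setminus\{v\}$. Vertex-transitivity is exactly what excludes the hypothetical $(57,2)$-Moore graph, via Higman's theorem that such a graph, if it exists, is not vertex-transitive --- so no counting argument is needed or, apparently, available.

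Your positive-direction argument for $P\arrows P'$ also goes astray. With $8$ colors on $10$ vertices and two size-$2$ classes $\{a,b\}$ and $\{c,d\}$, deleting both of $\{a,b\}$ leaves $c$ and $d$ still sharing a color, so the remaining eight vertices are \emph{not} rainbow, contrary to what you wrote. You must delete one vertex from each size-$2$ class; among the four cross-pairs at least one is a non-edge, since otherwise those four vertices contain a $C_4$ (and $G$ is triangle-free, so such a $C_4$ would be genuine). Deleting that nonadjacent cross-pair leaves a rainbow $8$-set, and edge-transitivity of $\overline P$ (respectively $\overline\Theta$), which the paper establishes directly, guarantees the result is isomorphic to $P'$ (respectively $\Theta'$). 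The size-$3$-class case is handled by triangle-freeness, as you noted.
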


\begin{lemma}\label{tool}Let $|V(G)|=k+2$ and let $G$ be bounded by $H$.  If $\Delta-\delta=3$
and
   $\Delta(G)-\delta(G)=1$, then $G\notarrows H$.
\end{lemma}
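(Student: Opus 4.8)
\textbf{Proof plan for Lemma \ref{tool}.}

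The plan is to derive a contradiction by constructing a $k$-coloring of $G$ with no rainbow induced copy of $H$. Since $|V(G)|=k+2$, a $k$-coloring of $G$ has exactly two ``repeated'' colors (either two colors each used twice, or one color used three times). A rainbow induced copy of $H$ uses $k$ of the $k+2$ vertices, one from each color class; so it is obtained by deleting two vertices that ``cover'' all repetitions. First I would set up notation: write $\Delta=\Delta(H)$, $\delta=\delta(H)$, so $\Delta-\delta=3$, and let $a=\Delta(G)$, $b=\delta(G)=a-1$. Because $G$ is bounded by $H$, we have $\Delta(G)=\Delta$ and $\delta(G)=n-k+\delta=(k+2)-k+\delta=\delta+2$, so $a=\Delta$ and $a-1=\delta+2$, i.e. $\Delta=\delta+3$ automatically — consistent with the hypothesis, and it pins down that $G$ has only the two degrees $\delta+2$ and $\delta+3$ with $V(G)=G_{\delta+2}\cup G_{\delta+3}$ (using the notation $G_i$ for the set of degree-$i$ vertices).

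The key observation is a counting/parity argument on which pairs $\{u,w\}$ can be deleted to leave an induced $H$. When we delete two vertices from $G$ to get a graph $G'$ on $k$ vertices, the degree in $G'$ of a remaining vertex $v$ is $\deg(G,v)$ minus the number of deleted neighbors of $v$. For $G'\approx H$ we need $\delta(G')=\delta$ and $\Delta(G')=\Delta=\delta+3$; since every $\deg(G,v)\in\{\delta+2,\delta+3\}$, a vertex $v$ surviving with degree $\delta$ in $G'$ must have had both deleted vertices as neighbors (and $\deg(G,v)=\delta+2$), while a vertex surviving with degree $\delta+3$ in $G'$ must have had $\deg(G,v)=\delta+3$ and neither deleted vertex as a neighbor. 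More importantly, by the Deck Lemma (Lemma \ref{deck}) applied inside $G$ — or by a direct edge count — the number of edges removed when deleting $\{u,w\}$ is $\deg(G,u)+\deg(G,w)-[u\sim w]$, and this must equal $e(G)-e(H)$, a fixed number. This forces $\deg(G,u)+\deg(G,w)-[u\sim w]$ to be the same for \emph{every} deletable pair, which (given degrees differ by at most $1$) severely restricts the structure: essentially all deletable pairs have the same ``type'' with respect to adjacency and the degree classes $G_{\delta+2},G_{\delta+3}$.

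From here the strategy is to choose the two repeated colors adversarially. I would show that one can $k$-color $G$ so that the set of ``bad pairs'' — pairs $\{u,w\}$ whose deletion yields $H$ — is disjoint from the family of pairs that hit both repetitions; concretely, pick a vertex $v_0$ of degree $\delta+3$ (the maximum) that survives in \emph{every} induced $H$, color it uniquely, and then argue that among the remaining $k+1$ vertices one can still find a color repetition that blocks all deletable pairs. The precise mechanism: every induced $H\subseteq G$ must contain a vertex of degree exactly $\Delta$ in that copy; track how few vertices of $G$ can ever play the role of the $\Delta$-vertex, and how few can play the role of a $\delta$-vertex. Because $\Delta-\delta=3$ while $G$'s degrees span only an interval of length $1$, producing a vertex of degree $\delta$ in $H$ requires deleting two of its neighbors, and producing a vertex of degree $\Delta$ requires it to already have degree $\delta+3$ and be non-adjacent to both deleted vertices — the two requirements pull the deleted pair in incompatible directions, and a short case analysis on $|G_{\delta+2}|$ versus $|G_{\delta+3}|$ and on adjacencies among low-degree / high-degree vertices shows the deletable pairs all lie in one small, explicitly describable set, which a suitable recoloring avoids.

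The main obstacle I anticipate is the case analysis in the last step: ruling out \emph{all} ways of picking the two repeated colors, i.e.\ showing the family of deletable pairs never forms a ``covering'' family (in the sense that every pair of color-repetition-hitting pairs meets it). I expect to need Bos\'ak's theorem (Theorem \ref{trivial}) or the Deck Lemma's size constraint $e(H)-\Delta\le e(G[U])\le e(H)-\delta$ to bound how homogeneous $G$ must be on $(k-1)$- or $k$-subsets, thereby collapsing the number of sub-cases; the delicate point is handling the sub-case where $G$ is as close to regular as possible (the two degree classes nearly balanced) and the deletable pairs could a priori be numerous, where I would fall back on exhibiting an explicit color repetition using two vertices of the same degree class that are non-adjacent, deleting which never produces the needed $\Delta$-vertex.
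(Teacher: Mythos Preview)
Your setup is correct: from boundedness and $n=k+2$ you get $\delta(G)=\delta+2=\Delta-1$, so every vertex of $G$ has degree $\Delta-1$ or $\Delta$, and any pair $\{u,w\}$ with $G-u-w\approx H$ must satisfy $\deg_G(u)+\deg_G(w)-[u\sim w]=e(G)-e(H)$, a fixed number. This edge count is exactly what the paper exploits as well.

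The plan stalls, however, at the step you yourself flag as the obstacle. Your fallback---repeat a color on two non-adjacent vertices of the same degree class so that deleting them ``never produces the needed $\Delta$-vertex''---does not work as stated: if both repeated vertices have degree $\Delta-1$ and some degree-$\Delta$ vertex is non-adjacent to both, that vertex survives with degree $\Delta$; and more to the point, a $k$-coloring of $k+2$ vertices with two doubled colors forces you to block \emph{four} transversal pairs simultaneously, not one, so controlling a single pair's type is not enough. You need to show the entire set of deletable pairs is ``thin'' in a very precise sense, and nothing in the plan explains why it should be.

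The paper gets there by a more targeted route. It fixes a copy $S$ of $H$ in $G$ and invokes the auxiliary Lemma~\ref{neighborhood-into-S}\ref{DELdel} to produce a vertex $v_0\notin S$ with $(S\setminus\{y_k\})\cup\{v_0\}\approx H$ (or the complementary case); this forces $|N(v_0)\cap S|=\Delta$, hence $\deg_G(v_0)=\Delta$ and $v_0\not\sim v_1$ (the other outside vertex), pinning down $e(G)-e(H)=2\Delta-1$. A further coloring then shows $|G_{\Delta-1}|\le 2$, and tracking equality in the edge count identifies the two low-degree vertices as $v_1,y_1$ and exhibits an induced $C_4$ on $\{v_0,y_2,v_1,y_1\}$. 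Only with this $C_4$ in hand does the final coloring succeed: the unique remaining candidate pair $\{v_0,y_2\}$ leaves a graph with no vertex of degree $\Delta-3$. Your abstract deletable-pair framework could in principle reach the same endpoint, but the paper's argument shows the missing ``short case analysis'' is not short: it hinges on locating this specific $C_4$, and that step uses Lemma~\ref{neighborhood-into-S} (a coloring trick on a \emph{given} copy of $H$) rather than pure edge-counting over all pairs.
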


The following is a technical lemma used in the proof of the Main Theorem and Lemma \ref{03}.

\begin{lemma}\label{unbounded-unique-delta}
If $|V(G)|\geq k+2$, $G\arrows H$, $\delta=1$,
and $\delta(G)<n-k+\delta$, then $\Delta\leq\delta+2=3$.  Furthermore,
if equality holds, then $|H_3|=1$, $H_3\sim H_2$, and there is an $S\subseteq V(G)$ and $v\in V(G)\setminus S$
such that  $G[S]\approx H$,  $|N(v)\cap S|=1$ and $v\not\sim H_3\cup H_2$.
\end{lemma}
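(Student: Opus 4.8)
The plan is to leverage the Deck Lemma together with a counting argument on the vertex $v\in V(G)$ that witnesses the failure of $G$ to be bounded by $H$. Since $\delta(G)<n-k+\delta$ and $\delta=1$, there is a vertex $v$ with $\deg(G,v)\le n-k$, i.e. $\codeg(G,v)\ge k-1$; equivalently, $v$ has at least $k-1$ non-neighbors. First I would extract, from the set of non-neighbors of $v$, a set $S$ of size $k$ with $v\notin S$ and consider the coloring of $G$ that is forced by $G\arrows H$: since $G\arrows H$ and $|V(G)|\ge k+2$, suitable colorings produce rainbow induced copies of $H$, and the Deck Lemma tells us that every $(k-1)$-subset of $V(G)$ induces a member of $\mathrm{deck}(H)$. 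Applying this to subsets of the form $S\setminus\{s\}$ for $s\in S$ and to $S\cup\{v\}\setminus\{s,s'\}$, I would compare edge counts to pin down the structure.

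The heart of the argument is the bound $\Delta\le\delta+2=3$. Here I would argue by contradiction: if $\Delta\ge 4$, then since $H$ has $\delta=1$ and (by Lemma \ref{consecutive-degrees}, unless $H$ or $\overline H$ is a star, in which case the hypotheses are easily checked directly) consecutive degrees, $H$ has vertices of every degree $1,2,3,4$. Now look at the vertex $v$: because $v$ has many non-neighbors, inside any rainbow copy $G[S]\approx H$ with $S$ among $v$'s non-neighbors, adding $v$ to a subset of $S$ of size $k-1$ must still land in $\mathrm{deck}(H)$. By choosing which vertex of the rainbow $H$-copy to delete — in particular deleting the maximum-degree vertex versus a degree-$1$ vertex — and comparing with $G[S\cup\{v\}\setminus\{w\}]$, the contribution of $v$'s edges is constrained to lie in the narrow window $[e(H)-\Delta,\,e(H)-\delta]$ from the Deck Lemma. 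Since $v$'s degree into $S$ is essentially unconstrained from below (it can be $0$ or $1$) but the deleted-vertex degrees range over $\{1,\dots,\Delta\}$, a gap opens when $\Delta-\delta\ge 3$: replacing a deleted degree-$1$ vertex by $v$ changes the edge count by $\deg_S(v)-1$, while replacing a deleted degree-$\Delta$ vertex by $v$ changes it by $\deg_S(v)-\Delta$, and both resulting graphs must be in $\mathrm{deck}(H)$, which forces $\deg_S(v)-1$ and $\deg_S(v)-\Delta$ to be within $\Delta-\delta$ of each other — contradiction once $\Delta\ge 4$ unless $\deg_S(v)$ is forced to a single value; tracking that forced value is exactly what yields the structural conclusions.

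Assuming equality $\Delta=\delta+2=3$, the refined conclusions follow from squeezing the same inequalities until they are tight. The condition $|H_3|=1$ should come from a parity/counting argument: if there were two vertices of degree $3$, one could delete one of them from a rainbow copy, insert $v$ (with its small degree into $S$), and still need to obtain a graph in $\mathrm{deck}(H)$ whose degree sequence is incompatible with having had a unique maximum-degree vertex. The facts $H_3\sim H_2$ and the existence of $S,v$ with $|N(v)\cap S|=1$ and $v\not\sim H_3\cup H_2$ come from examining precisely which $(k-1)$-subset of $S\cup\{v\}$ is isomorphic to $H$ and which is isomorphic to $H$ with its degree-$3$ vertex replaced: $v$ must play the role of the degree-$1$ vertex (so $|N(v)\cap S|=1$), and the single degree-$3$ vertex together with its neighbors — which are the degree-$2$ vertices, giving $H_3\sim H_2$ — must all be non-neighbors of $v$. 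The main obstacle I anticipate is the bookkeeping in the case analysis: ruling out $\Delta\ge 4$ cleanly, and then, in the boundary case, making sure that every one of the three structural claims ($|H_3|=1$, $H_3\sim H_2$, the existence of the specified $S$ and $v$) is squeezed out of the same family of Deck-Lemma edge-count inequalities rather than needing an ad hoc extra idea; some care is also needed because $H$ may fail to have consecutive degrees precisely when it or its complement is a star, and that exceptional case must be disposed of separately at the outset.
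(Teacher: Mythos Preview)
Your overall strategy---find $v$ of small degree, produce a rainbow copy $S$ of $H$ nearly avoiding $N(v)$, then apply the Deck Lemma to $(k-1)$-subsets containing $v$---is correct and matches the paper. But two steps need repair. First, the construction of $S$: you cannot simply ``extract from the non-neighbors of $v$ a set $S$ of size $k$,'' since $v$ is only guaranteed $k-1$ non-neighbors and an arbitrary such set need not induce $H$. The paper instead colors $k-1$ non-neighbors of $v$ with distinct colors and everything else with the remaining color; the rainbow $H$-copy $S$ then contains those $k-1$ vertices, excludes $v$ (else $v$ would have degree $0$ in $G[S]$, contradicting $\delta=1$), and hence satisfies $|N(v)\cap S|\le 1$.

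Second, and this is the real gap, your single-vertex swap does not bound $\Delta$. Deleting $y_1$ and adding $v$ gives edge count $e(H)-1+d$ (with $d=|N(v)\cap S|$, up to whether $v\sim y_1$); deleting $y_k$ gives $e(H)-\Delta+d$. Requiring both to lie in $[e(H)-\Delta,\,e(H)-1]$ forces only $d=0$: the two values differ by $\Delta-1$, which is exactly the window width $\Delta-\delta$, so no contradiction arises for any $\Delta$. The paper instead deletes \emph{two} vertices $y_k,y_{k-1}$ and adds $v$: with $U=\{v\}\cup S\setminus\{y_k,y_{k-1}\}$,
\[
   e(H)-\Delta \;\le\; e(G[U]) \;\le\; e(H)-\Delta-(\Delta-1)+1+|N(v)\cap S| \;\le\; e(H)-2\Delta+3,
\]
using $\deg_{G[S]}(y_{k-1})\ge\Delta-1$ and $|N(v)\cap S|\le 1$; this yields $\Delta\le 3$ directly. (As you rightly flag, the bound on $\deg(y_{k-1})$ tacitly assumes consecutive degrees and so excludes stars; the paper relies on the context in which the lemma is invoked.) The equality conclusions then fall out by reading off when this chain is tight---packaged in the paper as the equality case of Lemma~\ref{neighborhood-into-S}\ref{DEL}---together with the observation that $y_{k-1}$ may be taken to be \emph{any} vertex of $H_2$, which forces $v\not\sim H_3\cup H_2$.
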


Finally, the following lemmas treat the case when
$\Delta=\Delta(H)\in \{1,2,3\}$.

\begin{lemma} \label{matching-isolated}
Let $\Delta=1$, $H\not\in \mathcal{F}_{\infty}$ and  $|V(G)|\geq
k+2$. Then $G\arrows H$ implies that $k$ is even and $(G,H)=
(M_{k/2+1}, M'_{k/2+1})$.
\end{lemma}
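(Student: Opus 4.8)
The plan is to use the structural lemmas already assembled, together with a direct analysis of colorings when $\Delta(H)=1$. When $\Delta=1$, the graph $H$ is a matching plus isolated vertices, say $H\approx M_\ell + E_s$ with $\ell\geq 1$ and $s\geq 0$; the hypothesis $H\notin\mathcal{F}_\infty$ rules out $s=0$ (a perfect matching is allowed only with extra care, but more relevantly rules out $\ell\leq 1$, i.e.\ stars and empty graphs) and also the case $\Lambda=P_3+K_1$ does not arise here since $\Delta(\Lambda)=2$. So we may assume $\ell\geq 2$ and $s\geq 1$, except we must separately dispose of the case $s=0$, the perfect matching, which is in $\mathcal{T}$ and hence handled by Lemma \ref{k+1} — there $f(M_\ell)=k+1<k+2$, so no $G$ on $\geq k+2$ vertices arrows it. Thus for the rest we take $s\geq 1$, and the target conclusion is that $G\arrows H$ forces $k=2\ell+s$ with $s=1$ and $G=M_{\ell+1}=M_{k/2+1}$.

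First I would pin down $\Delta(G)$ and near-regularity of $G$. Since $\delta(H)=0$ (because $s\geq 1$) we cannot invoke Lemma \ref{bounded}\eqref{cor:deg} directly, but Lemma \ref{unbounded-unique-delta} is designed for exactly $\delta=1$, not $\delta=0$; so instead the route is: if $\delta(H)=0$ then consider whether $G$ can have a vertex of very small degree and color it to kill all rainbow copies. Concretely, use the Deck Lemma \ref{deck}: every $(k-1)$-subset of $V(G)$ induces a graph in $\mathrm{deck}(H)$, and $\mathrm{deck}(M_\ell+E_s)$ consists only of $M_\ell+E_{s-1}$ (delete an isolated vertex) and $M_{\ell-1}+E_{s+1}$ (delete one endpoint of an edge). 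This is extremely restrictive: it says every $(k-1)$-vertex induced subgraph of $G$ is a matching with few isolated vertices, which via Bos\'ak-type reasoning (Theorem \ref{trivial}) and an edge-count ($e(H)-\Delta\leq e(G[U])\leq e(H)-\delta$, i.e.\ $\ell-1\leq e(G[U])\leq \ell$) forces $G$ itself to be a matching plus a bounded number of isolated vertices. Then $\Delta(G)=1$, so $G\approx M_m + E_t$ for some $m,t$ with $n=2m+t\geq k+2=2\ell+s+2$.

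Next I would run the coloring argument to force $m=\ell+1$, $t=0$, and $s=1$. Given a $k$-coloring of $G=M_m+E_t$, a rainbow induced copy of $H=M_\ell+E_s$ needs $\ell$ edges of the matching with $2\ell$ distinct colors among their endpoints, plus $s$ further isolated vertices with $s$ new colors, and crucially \emph{no} edge among the chosen parts beyond the $\ell$ matching edges — automatic here since $G$ has no other edges. So $G\arrows H$ iff every $k$-coloring of $G$ admits $\ell$ vertex-disjoint edges of $G$ with all $2\ell$ endpoint-colors distinct, together with $s$ more singletons of distinct new colors; since we have $k=2\ell+s$ colors total and $n=2m+t$ vertices, the adversary will try to repeat colors on matched pairs. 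The key combinatorial claim: if $m\geq \ell+2$ or $t\geq 1$, one can $k$-color $G$ so that in every rainbow-candidate, a forced color collision occurs — the cleanest way is to make many edges \emph{monochromatic} (both endpoints the same color), which is possible precisely when there are enough edges relative to colors, and then a monochromatic edge can never sit inside a rainbow $H$, while isolated vertices of $G$ are too few (only $t\leq$ small, and we need $s\leq t + (\text{unused singleton colors})$) to supply all $s$ isolated vertices of $H$. Working this out shows $m+t$ cannot exceed $\ell+1$ in the right way, pinning $G=M_{\ell+1}$ and forcing $s=1$, hence $k=2\ell+1$ — wait, that is odd; rechecking, $H=M_\ell+E_1$ has $k=2\ell+1$, but the statement says $k$ even and $H=M'_{k/2+1}$, and indeed $M'_{k/2+1}$ is $M_{k/2+1}$ minus two nonadjacent vertices; deleting two \emph{nonadjacent} vertices from a matching $M_{\ell+1}$ gives either $M_{\ell-1}+E_2$ (two endpoints of two different edges — but those are nonadjacent) or $M_\ell + E_0$ minus nothing... the honest bookkeeping is that $M'_{\ell+1}=M_{\ell-1}+E_2$ when the two deleted vertices are endpoints of distinct edges, which is the relevant graph, so $k=2(\ell-1)+2=2\ell$, matching $M_{k/2+1}$ with $k/2+1=\ell+1$. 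So the target $H$ has $s=2$ isolated vertices and $\ell-1$ edges, $k=2\ell$ even, and $G=M_{\ell+1}=M_{k/2+1}$.

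The main obstacle I expect is the second half: showing \emph{exactly} that $G=M_{m}+E_t$ arrows $H=M_{\ell-1}+E_2$ (with $k=2\ell$) forces $(m,t)=(\ell+1,0)$ and \emph{fails} to arrow it otherwise. The upper side (producing a bad coloring when $2m+t>k+2$, i.e.\ when $G$ is ``too big'') requires a careful counting argument: color to maximize the number of monochromatic edges, then argue that a rainbow induced $M_{\ell-1}+E_2$ cannot avoid them unless there are enough genuinely isolated-or-singleton-colored vertices, and quantify this against $k=2\ell$. I would isolate this as the crux, handle the boundary cases $t\in\{0,1,2\}$ explicitly, and for $G=M_{\ell+1}$ itself verify $G\arrows H$ by a pigeonhole argument on the $2\ell$ colors over $\ell+1$ edges: some two of the $\ell+1$ edges together use at most $3$ colors, but at least $\ell-1\geq 1$ of the remaining edges must be ``nearly rainbow,'' and a short case check extracts $\ell-1$ rainbow edges plus $2$ singleton colors. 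The verification that $G=M_{\ell+1}$ does arrow $M'_{\ell+1}$ is essentially the content of the claim ${\mathcal Arrow}(M'_\ell)\ni M_\ell, M_{\ell-1}+K_1$ in the Main Theorem restricted to one of its two members, so it should follow the same pigeonhole as used there.
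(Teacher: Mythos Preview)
Your plan has two genuine problems.

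First, the dismissal of the perfect-matching case $H=M_\ell$ (your $s=0$) is wrong. That $M_\ell\in\mathcal{T}$ does not give $f(M_\ell)=k+1$: membership of a graph $T$ in $\mathcal{T}$ tells you, via Lemma~\ref{k+1}, that $T\arrows T'$, hence gives information about $f(T')$, not about $f(T)$. In fact $M_\ell\notin\mathcal{T}'$ for $\ell\geq 2$, so Lemma~\ref{k+1} yields nothing whatsoever about which graphs arrow $M_\ell$; the case $s=0$ must be handled by the same coloring analysis as the rest. (Incidentally, $H\notin\mathcal{F}_\infty$ does not rule out $\ell=1$ either: $M_1+E_s$ with $s\geq 2$ has $\Delta=1$ and lies outside $\mathcal{F}_\infty$.)

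Second, you overcomplicate the step $\Delta(G)=1$. You invoke ``Bos\'ak-type reasoning,'' but Bos\'ak's theorem (Theorem~\ref{trivial}) requires all $(k-1)$-vertex induced subgraphs to have the \emph{same} number of edges, which fails here (they can have $\ell-1$ or $\ell$). The immediate route is Lemma~\ref{bounded} part~\ref{lem:deg}: since $H\notin\mathcal{F}_\infty$ forces $k\geq 4$, one has $\Delta=1\leq k-3$ and hence $\Delta(G)=1$ directly. (Your Deck-Lemma route can be salvaged without Bos\'ak --- include a putative degree-$2$ vertex and two of its neighbors in a $(k-1)$-set --- but it is unnecessary.)

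After that, the paper's argument is a clean three-case split rather than the $m,t$-bounding you sketch: Case~1, $G$ has more isolated vertices than $H$ (distinctly color the isolates of $G$, with up to $k-1$ colors, to force too many isolates in any rainbow $k$-set); Case~2, $G$ has more edges than $H$ and $H$ has $\geq 3$ isolated vertices (rainbow-color the vertices of enough edges of $G$ to force too many edges); Case~3, $G$ has more edges and $H$ has $\leq 2$ isolated vertices, subdivided by $n\geq k+3$ versus $n=k+2$. In the last sub-case one sees at once that $G$ must be a perfect matching $M_{k/2+1}$ and $H$ must be $M'_{k/2+1}=M_{k/2-1}+2K_1$, with the verification $M_{k/2+1}\arrows M'_{k/2+1}$ a one-line observation. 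This organization handles $s=0$ and $\ell=1$ uniformly and sidesteps the bookkeeping you flag as the main obstacle.
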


\begin{lemma}\label{Delta2}
Let $\Delta=2$, $H\not\in\mathcal{F}_{\infty}$, $|V(G)|\geq k+2$
and $\delta(G)<n-k+\delta$. Then, $G\notarrows H$.
\end{lemma}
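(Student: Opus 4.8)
The plan is to assume $G\arrows H$ and derive a contradiction. Since $\Delta(H)=2$, $H$ is a disjoint union of paths and cycles with at least one component of order $\geq 3$, and since $H\notin\mathcal{F}_{\infty}$, if $k\leq 4$ then $H\approx P_4$ or $H\approx C_4$. For $H\approx P_4$, ${\rm deck}(P_4)$ has a member with one edge and a member with two, so by the Deck Lemma (Lemma~\ref{deck}) every $3$-subset of $V(G)$ spans one or two edges, whence $G$ and $\overline{G}$ are triangle-free and $n\leq 5$ (every graph on $6$ vertices contains a triangle or an independent set of three vertices), contradicting $n\geq k+2=6$. For $H\approx C_4$, every $3$-subset spans exactly two edges, so by Bos\'ak's Theorem (Theorem~\ref{trivial}) $G$ is complete or empty, which it is not. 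So $k\geq 5$.

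For $k\geq 5$, the first part of Lemma~\ref{bounded} gives $\Delta(G)=2$, so $G$ too is a disjoint union of paths and cycles; Lemma~\ref{consecutive-degrees} gives that $H$ has consecutive degrees (the star exceptions lie in $\mathcal{F}_{\infty}$), and $\delta(H)=2$ is impossible, because then $H$ is $2$-regular and the second part of Lemma~\ref{bounded} forces $n\leq k$. Hence $\delta(H)\in\{0,1\}$, so $H$ has a path component and a component of order $\geq 3$. If $G$ has no induced copy of $H$, then any surjective $k$-coloring contradicts $G\arrows H$; so I may assume $G$ contains an induced $H$, and the task is to produce a $k$-coloring of $V(G)$ with no rainbow induced copy of $H$.

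I would split on whether some component of $G$ is ``large''. \textbf{Case 1:} $G$ has a path component of order $\geq k-1$ or a cycle component of order $\geq k$. Then $G$ has an induced $P_{k-1}$, so $P_{k-1}\in{\rm deck}(H)$ by the Deck Lemma; checking which one-vertex deletions of a disjoint union of paths and cycles can equal the single path $P_{k-1}$ (using $\Delta(H)=2$ and $\delta(H)\leq 1$) forces $H\in\{P_k,\,P_{k-1}+K_1\}$. For each of these one exhibits a suitable $k$-coloring: if $G$ is disconnected, give each component a palette of fewer than $k$ colors whose union is $[k]$ (possible, since $G$ has a component on $\geq k-1$ vertices and at least one further vertex), so no connected induced subgraph — in particular no induced $P_k$ or $P_{k-1}$ — is rainbow; if $G\approx P_n$ use the coloring $1,1,2,2,3,4,\ldots,k$ padded with copies of $k$, for which every window of $k$, resp.\ $k-1$, consecutive vertices repeats a color; if $G\approx C_n$ place one or two repeated colors on appropriately chosen independent sets to the same effect. \textbf{Case 2:} every path component of $G$ has order $\leq k-2$ and every cycle component has order $\leq k-1$. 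Then $G$ has no induced $P_{k-1}$, so (as $G$ does induce $H$) every path component of $H$ has order $\leq k-2$. If $H$'s only components of order $\geq 3$ are cycles, then an induced $H$ forces $G$ to have, for each cycle length $b$ occurring in $H$, at least as many $C_b$-components, and one destroys every rainbow induced $C_b$ by giving two adjacent vertices of each cycle component of $G$ a common color, the spare vertices from $n\geq k+2$ completing the palette. Otherwise $H$ has a path component $P_a$ with $3\leq a\leq k-2$, hence an induced $P_3$, so it suffices to make every induced $P_3$ of $G$ non-rainbow; this is achieved by a coloring that is $2$-periodic on each component whenever $G$ has enough components for such a coloring to be surjective onto $[k]$, while if $G$ has too few components, then each component together with ``$G$ induces $H$'' pins $H$ down to a short explicit list (few components, limited independence number available for the isolated components of $H$), each handled by a tailored coloring repeating colors on well-placed pairs of vertices, again using the spare vertices from $n\geq k+2$.

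The step I expect to be the main obstacle is Case 2 when $G$ has few but fairly large components: there one must pin down exactly which $H$ can be induced by such a $G$ — the key being to weigh the number of small components of $H$ against the independence number of $G$ — and then, for each surviving $H$, exhibit a surjective $k$-coloring destroying every rainbow induced copy. The explicit colorings of Case 1 and the ``$G$ has no induced copy of $H$'' reductions are routine but need careful checking, and throughout, the inequality $n\geq k+2$ is used precisely to supply the spare vertices that make the palette surjective.
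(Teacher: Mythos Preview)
Your proposal is a plan rather than a proof, and beyond the gap you flag in Case~2 there is a concrete slip in Case~1. For $H\approx P_{k-1}+K_1$ with $G$ disconnected, assigning each component ``fewer than $k$ colors'' only prevents rainbow \emph{connected} subgraphs on $k$ vertices; it does not prevent a rainbow $P_{k-1}$, which has only $k-1$ vertices and may well sit inside a component that received $k-1$ colors. Combined with a vertex of the remaining color in another component, this gives a rainbow induced $P_{k-1}+K_1$. The repair---arranging that every window of $k-1$ consecutive vertices in each large component repeats a color while still hitting all $k$ colors globally---is available when $n\geq k+2$, but it has to be argued, and similar care is needed for your $C_n$ sketch (the paper in fact uses two pairs of \emph{adjacent} vertices there, not independent sets).

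The paper takes a different and more economical route: it splits on $H$ rather than on the component structure of $G$. Either $H$ has two nonadjacent vertices of degree~$2$, or all degree-$2$ vertices of $H$ are mutually adjacent, which forces $H$ to have a single nontrivial component $L\in\{P_3,K_3,P_4\}$ plus isolated edges and vertices---a finite check. In the first case, Lemma~\ref{neighborhood-into-S} part~\ref{DEL}, applied with $y_k,y_{k-1}$ chosen nonadjacent of degree~$2$, forces every $v\notin S$ to send both of its edges into $S$, so $V(G)\setminus S$ is independent; two further short colorings then show $G$ must be $2$-regular of girth at least~$5$, and a final explicit coloring disposes of the lone survivor $G=C_n$, $H=P_{k-1}+K_1$. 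The structural lever of Lemma~\ref{neighborhood-into-S} is precisely what your component-based split lacks, and is why your Case~2 balloons into an open-ended list of ``tailored colorings'' while the paper's stays bounded.
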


\begin{lemma}\label{03}
Let $\Delta=3$, $H\not\in \mathcal{F}_{\infty}$, $|V(G)|\geq k+2$
and $\delta(G)<n-k+\delta$. Then, $G\notarrows H$.
\end{lemma}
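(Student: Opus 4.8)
The plan is to show that when $\Delta=3$, $H\notin\mathcal{F}_\infty$, $|V(G)|=n\geq k+2$, and $\delta(G)<n-k+\delta$, no $k$-coloring obstruction can survive — that is, we exhibit a $k$-coloring of $G$ with no rainbow induced copy of $H$. First I would invoke Lemma \ref{unbounded-unique-delta}: since $\delta(G)<n-k+\delta$ and $G\arrows H$, we must have $\delta=1$ (otherwise the hypotheses of Lemma \ref{bounded}\eqref{cor:deg} or the structure already rule this out), and then $\Delta\leq\delta+2=3$ forces $\Delta=3$ exactly, so the "furthermore" clause applies: $|H_3|=1$, $H_3\sim H_2$, and there is $S\subseteq V(G)$ with $G[S]\approx H$, a vertex $v\in V(G)\setminus S$ with $|N(v)\cap S|=1$, and $v\not\sim H_3\cup H_2$. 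So $H$ has exactly one vertex of degree $3$, call it $z$, it is adjacent to all vertices of degree $2$, and $\delta=1$ means there is also a vertex of degree $1$; since the degree sequence is consecutive (Lemma \ref{consecutive-degrees}, as $H$ is not a star) the degrees present are exactly $\{1,2,3\}$.

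The core of the argument is a recoloring/swap argument exploiting the vertex $v$ of low attachment. Fix the copy $G[S]\approx H$ with the labelling of $H$'s vertices, and let $u$ be the unique neighbor of $v$ in $S$. The idea is to build a coloring in which, roughly, $v$ is given the color that $z$ (the degree-$3$ vertex of $H$) "wants," and to argue that any induced copy of $H$ that is rainbow must place its degree-$3$ vertex on a vertex whose neighborhood structure is incompatible. Concretely: since $v\not\sim H_3$, the vertex $v$ is nonadjacent to $z$ in $G$, and since $v\not\sim H_2$, $v$ is nonadjacent in $G$ to every degree-$2$ vertex of the copy; its only neighbor in $S$ is $u$, which therefore has degree $1$ in $H$ (it cannot be $z$ or a degree-$2$ vertex). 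I would color $S$ rainbow with the $k$ colors, giving $u$ color $c_u$, and color $v$ with $c_u$ as well — wait, that is too crude; better: color $V(G)$ so that exactly the color class of $z$'s color is $\{z,v\}$ and all other classes are singletons inside $S\cup\{v\}$, then extend to the remaining $n-k-1$ vertices arbitrarily by reusing colors so that no new color class becomes a singleton outside. Then in any induced rainbow $H'$ using all $k$ colors, $H'$ contains exactly one of $z,v$; a careful degree count — using that $v$ has degree into $S$ equal to $1$, that $\deg_G(z)$ might exceed $3$ once we leave $H$'s copy, and the Deck Lemma \ref{deck} bounding edge counts of $(k-1)$-subsets — should show the only way to complete a rainbow $H'$ is to reuse the copy $S$, contradiction since $z\in S$ has its color shared with $v\notin S$.

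The main obstacle I anticipate is the bookkeeping of degrees once we pass outside the single copy $S$: the lemma hypothesis only controls $\Delta(G)$ if $\Delta\leq k-3$ (Lemma \ref{bounded}\eqref{lem:deg}), and with $\Delta=3$ and $k$ small this may fail, so $G$ could have vertices of high degree and one cannot freely assume $G$ is close to $3$-regular. The way around this is to run the argument only on the induced subgraph $G[S\cup\{v\}]$ plus a controlled boundary, using the Deck Lemma to force every $(k-1)$-subset to have edge count in $[e(H)-3,\,e(H)-1]$, which pins the local structure near $S$ tightly; combined with $|H_3|=1$ and $H_3\sim H_2$, this rigidity is what makes the recoloring work. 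I would also need to separately dispatch the borderline small cases (e.g. $k$ just above $2\ell$ for small matchings-plus-isolated-vertices), but those are finite checks. If the direct recoloring proves unwieldy, the fallback is to argue by contradiction: assume $G\arrows H$, derive from Lemmas \ref{regular} and \ref{tool} that $G$ is neither regular nor has $\Delta(G)-\delta(G)=1$ in the relevant range, then use $\delta(G)<n-k+\delta$ together with Lemma \ref{unbounded-unique-delta}'s witness $(S,v)$ to manufacture a bad coloring explicitly on $S\cup\{v\}$ and extend.
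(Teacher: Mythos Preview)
Your proposal has two genuine gaps.

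First, you assert that $\delta=1$ is forced, dismissing $\delta=0$ with a hand-wave toward Lemma \ref{bounded}\eqref{cor:deg}. This is wrong: the hypothesis $\delta(G)<n-k+\delta$ is perfectly compatible with $\delta=0$ (it just says $\delta(G)<n-k$), and Lemma \ref{unbounded-unique-delta} explicitly assumes $\delta=1$, so it gives you nothing in that case. The paper treats $\delta=0$ as a full case (its CASE~1), establishing the analogue of the witness $(S,v_1,y_1)$ separately there via Lemma \ref{neighborhood-into-S}\eqref{DELdel}.

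Second, and more seriously, your recoloring strategy cannot close. You color $S\cup\{v\}$ carefully and then ``extend to the remaining $n-k-1$ vertices arbitrarily by reusing colors,'' and claim any rainbow $H'$ must sit inside $S$. But nothing prevents a rainbow $H'$ from using vertices outside $S\cup\{v\}$; you have no control over their adjacencies, and with $\Delta=3$ but possibly $k\leq 5$ you do not even know $\Delta(G)=3$ (Lemma \ref{bounded}\eqref{lem:deg} needs $\Delta\leq k-3$). So the degree bookkeeping you allude to has no footing. The paper avoids this entirely: it never tries to build a global bad coloring. Instead it works \emph{locally} with the Deck Lemma, choosing specific $(k-1)$-subsets of $S\cup\{v_1\}$ (e.g.\ $\{v_1\}\cup S\setminus\{y_k,y_1\}$, or $\{v_1\}\cup S\setminus\{y_2,y_3\}$ for suitable degree-one $y_2,y_3$) whose edge counts fall outside $[e(H)-\Delta,\,e(H)-\delta]$, a contradiction. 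For $\delta=1$ this forces $y_1\sim y_k$ and $H$ connected, after which $H_3\sim H_2$ and $|H_3|=1$ pin $H$ down to three explicit graphs on $k\in\{4,5,6\}$ vertices, each dispatched by exhibiting a $(k-1)$-subset not in $\mathrm{deck}(H)$.
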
~\\


\section{{\bf PROOF of the MAIN THEOREM}}\label{Proof}

Let $H$ be a graph on $k$ vertices. Recall that
$\mathcal{F}_{\infty}=\left\{K_k,\overline{K_k} : k\geq
2\right\}\cup\left\{S_k,\overline{S_k} : k\geq
3\right\}\cup\left\{\Lambda,\overline{\Lambda}\right\}$. If $H\in
\mathcal{F}_{\infty}$, then the theorem follows from Lemma \ref{infinity} and Theorem~\ref{A}.

Let $G\arrows H$, $|V(G)|>k$ and $H\not\in\mathcal{F}_{\infty}$. We
shall describe all such graphs $G$ on $n$ vertices.

If   $n=k+1$, then
Lemma \ref{k+1} claims that $H\approx T'\in \mathcal {T}'$ and $G\approx T$.
Note that $M_{\ell}'\in\mathcal{T}'$ for all $\ell\geq 3$.  If $T'=M_{\ell}'$, then $T=M_{\ell-1}+K_1$.  Therefore we may assume that $n\geq k+2$
and  $H\not\in\mathcal{F}_{\infty}$. By Lemma
\ref{consecutive-degrees}, the degree sequence of $H$ is
consecutive.\\

\noindent\textbf{CASE 1.} $G$ is bounded by $H$.\\

\indent Recall that $G$ being bounded by $H$ means that
$\Delta(G)=\Delta$ and $\delta(G)=n-k+\delta$.  By Lemma
\ref{Delta-delta}, $\Delta\leq\delta+3$.   Lemma \ref{bounded} gives
that $n\leq k+3$.

First, suppose $G$ is $\Delta$-regular.  If $\Delta\geq 2$, then by
Lemma \ref{regular},
$G\in\left\{P,\overline{P},\Theta,\overline{\Theta}\right\}$ and
$n=k+2$.  If $\Delta\leq 1$, then $G$ is a matching.  Lemma
\ref{matching-isolated} covers this case and gives that $H\approx
M_{k/2+1}'$.

Second, suppose $G$ is not regular, then
$$ n-k+\delta=\delta(G)<\Delta(G)=\Delta . $$
Since $\Delta-\delta\leq 3$,  Lemma \ref{bounded} implies that
$n-k<3$.  The fact that  $n\geq k+2$, implies that $n=k+2$. Applying
Lemma \ref{bounded} again, we see that $\Delta-\delta=3$ and
$\delta(G)= (n-k)+\delta = 2+ (\Delta-3) = \Delta -1$. Thus
$\Delta(G)-\delta(G)=1$.  By Lemma \ref{tool}, $G\notarrows H$, a
contradiction.\\

\noindent\textbf{CASE 2.} $G$ is not bounded by $H$.\\

By Lemma \ref{bounded}, if $G\arrows H$ and $G$ is not bounded by
$H$, then either $\delta(H)\leq 1$ (in the case where
$\delta(G)<n-k+\delta$) or $\Delta(H)\geq k-2$ (in the case where
$\Delta(G)>\Delta$). Using the fact that $G\arrows H$ iff
$\overline{G}\arrows\overline{H}$, we will assume, without loss of
generality, that $\delta(G)<n-k+\delta$ and $\delta\leq 1$.

Using Lemma \ref{unbounded-unique-delta} (when $\delta=1$) and Lemma \ref{Delta-delta} (when $\delta=0$),   we have that
$\Delta\leq 3$.  Since $\Delta\in\{1,2,3\}$,  Lemmas \ref{matching-isolated}, \ref{Delta2}, \ref{03}
give that $(G,H)=(M_\ell,
M_\ell')$.\\

Summarizing CASES 1 and 2, we see that if $n\geq k+2$ and
$H\not\in\mathcal{F}_{\infty}$, then $n=k+2$ and
$H$ or $\overline{H}$ is in $\{M_{k/2+1}',P', \Theta'\}$.  Lemma \ref{matching-isolated} and the fact that $M_{\ell}'\in\mathcal{T}'$ for all $\ell\geq 3$ give that ${\mathcal Arrow}\left(M_{\ell}'\right)=\left\{M_{\ell},M_{\ell-1}+K_1\right\}$.  Lemma \ref{regular} gives that ${\mathcal Arrow}(P')= \{P\}$ and ${\mathcal Arrow}(\Theta')=\{\Theta\}$.

This concludes the proof of Theorem \ref{main}.\\

\section{Proofs of Lemmas}
\label{LemmaProofs}
\renewcommand{\labelenumi}{(\arabic{enumi})}
\renewcommand{\theenumi}{(\arabic{enumi})}

\subsection{Proof of Lemma \ref{bounded}}~\\
\indent\ref{lem:deg}  Since $G\arrows H$, $\Delta(G)\geq\Delta$.
Let $\Delta\leq k-3$.  Suppose there exists a vertex $v\in V(G)$ such that $\deg(v)>\Delta$.  Color $N(v)$ with the first $\Delta+1+a$ colors, where $a$ is the largest integer such that both $\Delta+1+a\leq\deg(v)$ and $\Delta+1+a\leq k-1$. Color $v$ with color $k$ and color the rest of the vertices (if such exist) with the remaining colors (or color these vertices with color $1$ if no colors remain). Any $S\subseteq V(G)$ that induces a rainbow copy of $H$ has a vertex, namely $v$, of degree greater than $\Delta$, a contradiction.\\

\indent\ref{cor:deg}  By Part \ref{lem:deg}, $\Delta(G)=\Delta$. We have that $\Delta(\overline{H})=k-1-\delta(H)\leq k-3$. Hence,
$n-1-\delta(G)=\Delta(\overline{G})=\Delta(\overline{H})=k-1-\delta$.
So, $\delta(G)=n-k+\delta$ and
$\Delta=\Delta(G)\geq\delta(G)=n-k+\delta$.  Thus, $n\leq
k+\Delta-\delta$ with equality if and only if
$\Delta(G)=\delta(G)$.
$\Box$ \\~\\

\subsection{Proof of Lemma \ref{consecutive-degrees}}~\\
Let $H$ have the property that there is an $i$,
$\delta(H)<i<\Delta(H)$ such that there is  no $w\in V(H)$ with  $\deg(w)=i$.  Let $L_i(H)=\{v\in V(H) : \deg(v)<i\}$, and $U_i(H)=\{v\in V(H) : \deg(v)>i\}$.  Let $L_i(G)=\{v\in V(G) : \deg(v)<i\}$, and let $U_i(G)=\{v\in V(G) : \deg(v)>n-k+i\}$. Since $G\arrows H$, we may assume that $H\subseteq G$.\\

\noindent\textbf{Claim 1.} $V(H)=L_i(H)\cup U_i(H)$ and
$V(G)=L_i(G)\cup U_i(G)$.\\
\indent The first statement of the claim follows from our assumption on $H$. Assume that there is a vertex $v\in V(G)$ with $i\leq\deg(v)\leq n-k+i$.  Color $v$ with one color, $N(v)$ with $i$ other colors and $V(G)\setminus N[v]$ with the remaining $k-i-1$ colors. Any induced rainbow subgraph $H'$ of $G$ on $k$ vertices must contain $v$ and exactly $i$ of its neighbors.  Thus $H'$ can not be isomorphic to $H$; i.e., $G\notarrows H$, a contradiction.  This proves Claim 1.\\

\noindent\textbf{Claim 2.} $U_i(H)\subseteq U_i(G)$ and
$L_i(H)\subseteq L_i(G)$.\\
\indent If there is a vertex $w\in U_i(H)\cap L_i(G)$, then
$\deg(G,w)\leq i-1<i+1\leq\deg(H,w)$, a contradiction.  If there is a vertex $w\in L_i(H)\cap U_i(G)$, then $\deg(H,w)\leq i-1$, $\deg(G,w) \geq n-k+i+1$. Thus, $\codeg(H,w) \geq k-i$ and $\codeg(G,w)\leq k-i-2$, a contradiction since $\codeg(G,u)\geq\codeg(H,u)$ for all $u\in V(H)$.  This proves Claim 2.\\

Assume first that $|U_i(H)|=|U_i(G)|=1$ and consider an arbitrary $(k-1)$-subset
$U\subseteq L_i(G)$.  Color the vertices of $U$ with $k-1$ colors
and color the rest of $V(G)$ with the remaining color.  The induced
copy of $H$ must contain the member of $U_i(G)$ and so $U\cup
U_i(G)$ must induce $H$.  We may conclude that all $(k-1)$-subsets
of $L_i(G)$ are isomorphic. Since $|L_i(G)|=n-1\geq k+1$, Bos\'ak's
theorem implies that $L_i(G)$ induces a
trivial subgraph.  Given that $U\cup U_i(G)$ must induce $H$ for any
such $U$ and the degree sequence is not consecutive, both $G$ and
$H$ must be stars.

Now assume that  $|U_i(G)|\geq 2$ and $|U_i(H)|=1$.  Color as many vertices of
$U_i(G)$ with distinct colors as possible (at least two, at most
$k-1$) and color the rest with the remaining colors.  Under this
coloring, any rainbow subgraph on $k$ vertices will have at least
$2$ vertices in $U_i(G)$, a contradiction to Claim 2.

Thus, we may assume that $|U_i(H)|\geq 2$ and a complementary
argument implies that $|L_i(H)|\geq 2$.  Since $n\geq k+2$, it is
the case that either $|U_i(G) |>|U_i(H)|$ or $|L_i(G)|>|L_i(H)|$.
Without loss of generality, assume the former.  We know that
$|U_i(H)|=k-|L_i(H)|\leq k-2$.  Color $U_i(G)$ with $|U_i (H)|+1\leq
k-1$ colors and $L_i(G)$ with the remaining colors.  Under this
coloring, any rainbow subgraph of $G$ will have more than $|U_i(H)|$
vertices in $U_i(G)$, a contradiction to Claim 2. $\Box$ \\~\\

\subsection{Proof of Lemma \ref{deck}}~\\
Consider a $(k-1)$-subset $U\subseteq V(G)$. Color its vertices with
$k-1$ distinct colors and color the rest of the vertices with the remaining color.  Since there is a rainbow copy of $H$ in this coloring,
and its vertices must contain $U$, $G[U]$ must be in the deck of
$H$. Since each $(k-1)$-vertex induced subgraph of $H$ has at least
$e(H)-\Delta$ and at most $e(H)-\delta$ edges, the second statement
of the lemma follows. $\Box$ \\~\\

\subsection{An important auxiliary lemma}~\\
Recall that $H_d=\{w\in V(H) : \deg(H,w)=d\}$.

\begin{lemma} \label{neighborhood-into-S}
Let $H$ be a graph on $k$ vertices with consecutive degrees and let $G$ be
a graph on $n\geq k+1$ vertices such that $G\arrows H$. Furthermore,
let $S=\{y_1,y_2,\ldots,y_k\}\subseteq V(G)$ such that $G[S]\approx
H$. Let
$\deg(G[S],y_1)\leq\deg(G[S],y_2)\leq\cdots\leq\deg(G[S],y_k)$. Each
of the following is true:\\
\begin{enumerate}
   \vspace{-11pt}\item For any $v\in V(G)\setminus S$,
   $|N(v)\cap(S\setminus\{y_k,y_{k-1}\})|\geq\Delta-2$. If
   equality holds, then $|H_{\Delta}|=1$ and $H_{\Delta}\sim
   H_{\Delta-1}$.  If $H_{\Delta}\supseteq\{y_k,y_{k-1}\}$ and
   $y_k\not\sim y_{k-1}$ then for any $v\in V(G)\setminus S$,
   $|N(v)\cap(S\setminus\{y_k,y_{k-1}\})|\geq\Delta$.\label{DEL}\\
   \vspace{-11pt}\item For any $v\in V(G)\setminus S$,
   $|N(v)\cap(S\setminus\{y_1,y_2\})|\leq\delta+1$. If equality
   holds, then $|H_{\delta}|=1$ and $H_{\delta}\not\sim
   H_{\delta+1}$.  Moreover, if $H_{\delta}\supseteq\{y_1,y_2\}$ and
   $y_1\sim y_2$ then for any $v\in V(G)\setminus S$,
   $|N(v)\cap(S\setminus\{y_k, y_{k-1}\})|\leq \delta-1$.\label{del}\\
   \vspace{-11pt}\item There is a vertex $v\in V(G)\setminus S$ such that either $\{v\}\cup
   S\setminus\{y_k\}$ induces $H$ or $\{v\}\cup S\setminus\{y_1\}$
   induces $H$.\label{DELdel}
\end{enumerate}
\end{lemma}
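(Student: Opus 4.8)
The plan is to exploit the fundamental idea behind the Deck Lemma: if $G\arrows H$, then after coloring any $k-1$ vertices with distinct colors and the rest with the remaining color, one of the rainbow copies of $H$ must use exactly the $k-1$ pre-colored vertices plus one of the monochromatic ones. For part \ref{DELdel}, I would take $U=S\setminus\{y_k\}$ if we want to keep the low-degree end, but more directly: color $S\setminus\{y_1\}$ and $S\setminus\{y_k\}$ cleverly. Actually the cleanest route is to pick the $(k-1)$-set $U = S\setminus\{y_1,y_k\}\cup\{y_1\}$ — no, let me restate. For \ref{DELdel}: color the $k-2$ vertices of $S\setminus\{y_1,y_k\}$ with $k-2$ distinct colors, give $y_1$ and $y_k$ a shared color, say color $k-1$, and color $V(G)\setminus S$ with color $k$. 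A rainbow $H$ must take all of $S\setminus\{y_1,y_k\}$, exactly one of $\{y_1,y_k\}$, and exactly one vertex $v\in V(G)\setminus S$. That gives an induced $H$ on either $\{v\}\cup S\setminus\{y_k\}$ or $\{v\}\cup S\setminus\{y_1\}$, which is exactly \ref{DELdel}.

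For parts \ref{DEL} and \ref{del}, which are complementary (one follows from the other by passing to complements, using $G\arrows H \iff \overline G\arrows\overline H$ and that consecutive degrees are preserved under complementation), I would argue \ref{DEL} directly. Fix $v\in V(G)\setminus S$ and let $S' = S\setminus\{y_k,y_{k-1}\}$, so $|S'|=k-2$. Consider the $(k-1)$-set $U=S'\cup\{v\}$. By the Deck Lemma, $G[U]$ is in the deck of $H$, i.e.\ $G[U]\approx H-w$ for some $w\in V(H)$ with $\deg(H,w)\in\{\delta,\ldots,\Delta\}$, and crucially $e(G[U])\geq e(H)-\Delta$. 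Now I'd compare degrees inside $G[U]$: the vertices $y_1,\ldots,y_{k-2}$ retain their $G[S]$-degrees minus their adjacencies to $\{y_k,y_{k-1}\}$, and $v$ contributes $\deg(G[U],v)=|N(v)\cap S'|$. The key inequality is to bound $e(G[U])$ from above: since $G[S]\approx H$ and we removed the two highest-degree vertices $y_k,y_{k-1}$, $e(G[S']) = e(H) - \deg(H,y_k)-\deg(H,y_{k-1})+[y_k\sim y_{k-1}] \leq e(H)-2\delta-\ldots$; more to the point, $e(G[S'])\leq e(H)-2\delta$ is too weak — I need that removing the \emph{two largest} degrees removes at least $\deg(H,y_{k-1})+\deg(H,y_k)-1 \geq (\Delta-?) $. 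The right bound comes from: $e(H) - e(G[S']) = \deg(H,y_{k-1})+\deg(H,y_k) - [y_{k-1}\sim y_k] \geq \Delta + (\Delta') - 1$ where $\Delta'$ is the second-largest degree; combined with $e(G[U]) = e(G[S'])+|N(v)\cap S'| \geq e(H)-\Delta$ this yields $|N(v)\cap S'| \geq \Delta - e(H) + e(G[S']) \cdot(-1)$... I'll organize this as: $|N(v)\cap S'| = e(G[U]) - e(G[S']) \geq (e(H)-\Delta) - e(G[S'])$, and then bound $e(H)-e(G[S'])$ carefully using that $y_k,y_{k-1}$ have the two largest degrees in $H$ (at least $\Delta$ and at least the second-largest value), giving $|N(v)\cap S'|\geq \Delta-2$. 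The equality analysis — when $|N(v)\cap S'|=\Delta-2$ forces $|H_\Delta|=1$ and $H_\Delta\sim H_{\Delta-1}$ — will come from tracking exactly when all the slack inequalities are tight: equality in $e(G[U])\geq e(H)-\Delta$ means $v$ plays the role of a degree-$\Delta$ vertex's complement in the deck, and the removed pair must consist of one vertex of degree $\Delta$ and one of degree $\Delta-1$ that are adjacent.

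The main obstacle I anticipate is the equality/rigidity analysis in parts \ref{DEL} and \ref{del}: the inequality $|N(v)\cap S'|\geq\Delta-2$ itself is a short edge-counting argument, but pinning down that tightness forces $|H_\Delta|=1$, $H_\Delta\sim H_{\Delta-1}$, and separately handling the special sub-case where $\{y_k,y_{k-1}\}\subseteq H_\Delta$ with $y_k\not\sim y_{k-1}$ (which bumps the bound up to $\Delta$) requires carefully re-examining which vertex $w\in V(H)$ the set $U$ is missing and matching up degree multisets. I would handle the sub-case by noting that if $y_k\not\sim y_{k-1}$ and both have degree $\Delta$, then $e(G[S']) = e(H)-2\Delta$ exactly, so $|N(v)\cap S'| \geq (e(H)-\Delta) - (e(H)-2\Delta) = \Delta$. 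The consecutive-degree hypothesis on $H$ is what guarantees the degree multiset of any deck member is "almost all" of $\{\delta,\ldots,\Delta\}$, which is needed to make the matching-up arguments go through; I'd invoke it precisely at the step identifying the degree of the missing vertex $w$.
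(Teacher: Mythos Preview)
Your approach is correct and is essentially the paper's own proof: part \ref{DELdel} via exactly the coloring you describe (rainbow on $S\setminus\{y_1,y_k\}$, one color on $\{y_1,y_k\}$, one color on $V(G)\setminus S$), and parts \ref{DEL}--\ref{del} via the Deck Lemma bound $e(G[U])\geq e(H)-\Delta$ together with the edge count $e(G[U])=e(H)-\deg(y_k)-\deg(y_{k-1})+[y_k\sim y_{k-1}]+|N(v)\cap S'|$. The one place to tighten your sketch is the role of the consecutive-degree hypothesis: it is used precisely to force $\deg(y_{k-1})\geq\Delta-1$ (some vertex of $H$ has degree $\Delta-1$, so the second-largest degree is at least $\Delta-1$), which is what turns the raw inequality $|N(v)\cap S'|\geq\deg(y_{k-1})-1$ into $|N(v)\cap S'|\geq\Delta-2$ and drives the equality analysis.
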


\begin{proof}~\\
   \indent\ref{DEL} Let $U=\{v\}\cup S\setminus\{y_k,y_{k-1}\}$.
   Using the Deck Lemma  and counting edges incident to
   $y_k$ and $y_{k-1}$, we have $e(H)-\Delta\leq e(G[U])\leq
   e(H)-\Delta-(\Delta-1)+1+|N(v)\cap(S\setminus\{y_k, y_{k-1}\})|$.
   It follows that
   $$ \left|N(v)\cap\left(S\setminus\{y_k, y_{k-1}\}\right)\right|\geq
      \Delta-2 . $$
   If $y_k\not\sim y_{k-1}$ and both $y_k$ and $y_{k-1}$ are of
   degree $\Delta$, then $|N(v)\cap(S\setminus\{y_k,
   y_{k-1}\})|\geq\Delta$.\\

   \indent\ref{del} Let $U=\{v\}\cup S\setminus\{y_1,y_2\}$. Then
   $e(H)-\delta\geq e(U)\geq
   e(H)-\delta-(\delta+1)+|N(v)\cap(S\setminus\{y_1,y_2\})|$.
   Thus, all the statements in this part hold similarly to part
   \ref{DEL}.\\

   \indent\ref{DELdel} Rainbow color $S\setminus\{y_1,y_k\}$ with
   colors $\{1,\ldots,k-2\}$, both of the vertices in $\{y_1,y_k\}$ with
   color $k-1$ and $V(G)\setminus S$ with color $k$.  Regardless of
   which vertex of color $k-1$ is chosen, the statement holds.
\end{proof}~\\

\subsection{Proof of Lemma \ref{Delta-delta}}~\\
Let $G\arrows H$, $|V(G)|>k$, $S\subseteq V(G)$ and $G[S]\approx H$.
Lemma \ref{neighborhood-into-S} part \ref{DELdel} implies two cases:
\\

\noindent\textbf{CASE 1.} There is a $v\in V(G)\setminus S$ so that $S\cup\{v\}\setminus\{y_k\}$ induces $H$.\\
\indent Consequently, $|N(v)\cap S|\geq\Delta$ and, in particular,
$\Delta-2\leq\left|N(v)\cap\left(S\setminus\{y_1,y_2\}\right)\right|\leq\delta+1$.
The last inequality follows from Lemma \ref{neighborhood-into-S}
part \ref{del}.\\

\noindent\textbf{CASE 2.} There is a $v\in V(G)\setminus S$ so that $S\cup\{v\}\setminus\{y_1\}$ induces $H$.\\
\indent Consequently, $|N(v)\cap S|\leq\delta+1$ and
$\delta+1\geq\left|N(v)\cap\left(S\setminus\{y_k,y_{k-1}\}\right)\right|\geq\Delta-2$.
The last inequality follows from Lemma \ref{neighborhood-into-S}
part \ref{DEL}. \\

In both cases  $\Delta-\delta\leq 3$. $\Box$ \\~\\

\subsection{Proof of Lemma \ref{infinity}}~\\
If $H=K_2$ and  $G$ is disconnected, then color the vertices in one
component of $G$  with color $1$ and all other vertices with color
$2$. Thus $G\notarrows K_2$. On the other hand, if $G\notarrows
K_2$, then there is a partition of $V(G)=V_1\cup V_2$ such that
$V_1\not\sim V_2$.\\

If $H=K_k$, $k\geq 3$ and $G\neq K_n$, $n>k$, then $G\notarrows H$ follows
from the Deck Lemma since $G$ has two nonadjacent vertices or $n<k$. On the
other hand, it is obvious that $K_n\arrows K_k$, for all $n\geq k$.\\

Let $H= S_k$ for $k\geq 4$.
Then by the Deck Lemma, we see that $G$ has no induced subgraph isomorphic to $\overline{P_3}$ and no $K_3$.
Thus $\overline{G}$ has no induced $P_3$, and therefore $\overline{G}$ is a vertex disjoint union of cliques,
which implies that $G$ is a complete multipartite graph. Since $G$ has no $K_3$, $G$ is a complete bipartite graph.
If both parts of $G$ contain at least $2$ vertices, color the vertices in these parts with disjoint sets of colors
such that each part uses at least two colors. Then any rainbow $k$-subgraph is a complete bipartite graph with at least
two vertices in each part, a contradiction. So, we conclude that $G$ has only one vertex in one of the parts, thus $G$ is a star.\\

Let $H=P_3$.  It is easy to see that if $G \not\in \mathcal{P}_3$,
then the tri-partition $V_1, V_2, V_3$  of $V(G)$ as in the
definition of $\mathcal{P}'_3$   witnesses that $G\notarrows P_3$ by
coloring $V_1,V_2,V_3$ each with distinct colors. Suppose there is a
coloring of $V(G)$ with no rainbow copy of $P_3$.  Let the color
classes be $V_1,V_2,V_3$. Let $G'$ be a  tripartite subgraph of $G$
with parts $V_1,V_2,V_3$ which is obtained from $G$ by deleting all edges
with both endpoints in $V_i$, $i=1,2,3$. Consider a connected
component $Q$  of $G'$  with vertices in all three parts
$V_1,V_2,V_3$. We claim that this component is a complete tripartite
graph. To see this, consider the maximal complete tripartite
subgraph $Q'$ of $Q$. It is clear that $Q$ has a path with one
vertex in each of $V_1,V_2,V_3$.  This path must
induce a triangle; so $Q\neq\emptyset$.  If $Q'\neq Q$, then there
is a vertex $v\in V(Q)\setminus V(Q')$ such that $v$ is adjacent to
a vertex in $Q'$.  Without loss of generality assume that $v\in
V_1$, then $v$ must be adjacent to all vertices of $Q'$ in $V_2$ and
$V_3$. Thus $Q'\cup \{v\}$ is a complete tripartite graph larger
than $Q'$, a contradiction.  So, $Q'=Q$ and $Q$ is a complete
tripartite graph. Therefore, each component of $G'$ either has
vertices in only two parts or is a complete tripartite graph, so
$G\not\in \mathcal{P}_3$. $\Box$\\~\\

\subsection{Proof of Lemma \ref{k+1}}~\\
Let $G\arrows H$ and $n=k+1$.  Any coloring of $V(G)$ with $k$
colors assigns the same color to some two vertices.  Thus, for any
$u,v\in V(G)$, either $G-u$ or $G-v$ is isomorphic to $H$.  As an
immediate consequence, for at least $n-1$ vertices in $G$, the vertex
degrees have the same value $d=e(G)-e(H)$.  As a result, there are
only three possibilities: \\

\noindent\textbf{CASE 1.}   $G-w\approx H$ for all $w\in V(G)$.\\
\indent In particular, $G-u\approx G-v$ for all $u, v\in V(G)$.  Then, $G$ is regular. Since an isomorphism from $G-u$ and $G-v$ can be extended to an automorphism of $G$ mapping $u$ to $v$, we see that $G$ is vertex-transitive.\\

\noindent\textbf{CASE 2.}  There is exactly one vertex, $v$, such that $G-v\not\approx H$ and $\deg(G,v)=d'\notin\{0,n-1\}$.\\
\indent As before, we have that  for some $d$,   $\deg(G,w)=d$  for all $w\in
V(G)\setminus\{v\}$.  If $d'>d$, then the deletion of a $w\in
V(G)\setminus N[v]$ gives exactly one vertex of degree $d'$ and the rest of degree $d$ or $d-1$, but the deletion of a neighbor of $v$ does not, a contradiction. Similarly, if $d'<d$, then the deletion of a $w\in N(v)$ gives exactly one vertex of degree $d'-1$ but the deletion of a nonneighbor of $v$ does not, a contradiction.  Thus $d'=d$.

Let $w\sim v$, $w'\not\sim v$. Let $\varphi$ be an isomorphism from
$G-w$ to $G-w'$. Then, $\varphi$ maps vertices of degree $d-1$ in
$G-w$, which correspond to the neighbors of $w$, to vertices of
degree $d-1$ of $G-w'$, which correspond to the neighbors of $w'$.
In particular, $\varphi$ maps $v$ to some vertex $x\neq v$. As
before, we can extend $\varphi$ to an automorphism of $G$ by mapping
$w$ to $w'$. The existence of this automorphism
implies that $G-v\approx G-x$, and we can apply {CASE 1}.\\

\noindent\textbf{CASE 3.} There is exactly one vertex, $v\in V(G)$ such that $G-v\not\approx H$ and $\deg(G,v)\in\{0,n-1\}$.\\
\indent Assume without loss of generality that $\deg(G,v)=0$.  Then
$G-v-u\approx G-v-w$ for all $u,w \in V(G-v)$. As in CASE 1, $G-v$
is vertex transitive. \\

The above implies that if $G\arrows H$, $|V(G)|=k+1$ then $(G,H)=(T,T')$ for some $T\in \mathcal{T}$.
Now, let $T'\in \mathcal{T}'$. We need to show that $T\arrows T'$.  Let $|V(T')|=k$.  If we color the vertices of $T$ with $k$ colors then exactly two vertices, say $u$ and $v$ get the same color and the rest are totally multicolored.  So, if $T$ is vertex-transitive, then $T-\{u\}\approx T'$ and it is rainbow;
if $T$ is a union of a vertex transitive graph and an isolated vertex $w$, then without loss of generality $u\neq w$ and
$T-\{u\}\approx T'$ and it is rainbow; if $T$ has a vertex of degree $k$, the result follows from the previous case by considering $\overline{T}$. $\Box$\\~\\

\subsection{Proof of Lemma~\ref{regular}}~\\
Since $G$ is bounded by $H$, \begin{equation}
   n-k+\delta=\delta(G)=\Delta(G)=\Delta .
   \label{delbound}
\end{equation}
By Lemma~\ref{Delta-delta}, $\Delta-\delta\leq 3$.  Therefore,
either $n-k=2$ or $n-k=3$.\\

\noindent\textbf{CASE 1.} $n-k=3$. \\
\indent In this case, inequality (\ref{delbound}) becomes
$$ 3+\delta=\delta(G)= \Delta(G)=\Delta\leq \delta+3 . $$
Thus, all the inequalities are equalities, so $\delta=\Delta-3$.
This implies that $k\geq 4$. If $k=4$, then $\Delta=3$, $\delta=0$, and the fact that the degrees are consecutive implies that
$H\approx \overline{\Lambda}$, a contradiction to the assumption that $H\not\in \mathcal{F}_{\infty}$.
Thus, we can assume that $k\geq 5$.

Let $G[S]\approx H$. Let $y,y',w,w'$ be vertices in $S$ with degrees
$\Delta-3,\Delta-2,\Delta-1,\Delta$, respectively, in $G[S]$. The
fact that $G$ is $\Delta$-regular gives that $y,y',w,w'$ are
adjacent to $3,2,1,0$ vertices, respectively, in $V(G)\setminus S$.
Note that $y,y'$ and some two vertices in $V(G)\setminus S$ span
$C_4$ in $G$ and $w,w'$ and some two vertices in $V(G)\setminus S$
span $\overline{C_4}$ in $G$.

Let $U$ be a set of vertices in $G$ spanning a $C_4$. Color all
vertices of $U$ with color 1 and rainbow color the remaining
vertices with colors $\{2,\ldots,k\}$. Under this coloring, there is a rainbow copy of $H$ containing exactly one vertex of $U$.  Thus, the set of vertices outside of this copy of $H$ spans at least $2$ edges.  As a result, $e(H)\geq e(G)-3\Delta+2$.

Let $U'$ be a set of vertices spanning $\overline{C_4}$ in $G$.
Color all vertices of $U'$ with color 1 and rainbow color the
remaining vertices with colors $\{2,\ldots,k\}$. Under this
coloring, there is a rainbow copy of $H$ containing exactly one
vertex of $U'$.  Thus, the set of vertices outside of this copy of $H$ spans at most $1$ edge. As a result, $e(H)\leq e(G)-3\Delta+1$,
a contradiction to the
bound of $e(H)\geq e(G)-3\Delta+2$, derived above.\\

\noindent\textbf{CASE 2.} $n-k=2$. \\
In this case, inequality (\ref{delbound}) becomes
\begin{equation}\label{delbound2}
   2+\delta=\delta(G)= \Delta(G)=\Delta .
\end{equation}
Thus $\delta=\Delta-2$. Let $G[S]\approx H$.

If $n\leq 5$, then $k\leq 3$ so $H\in \mathcal{F}_{\infty}$. Thus,
we may assume that $n\geq 6$. By Ramsey's theorem, either $G$
contains a $K_3$ or $G$ contains $\overline{K_3}$. Without loss of
generality,
assume that $G$ contains $\overline{K_3}$.\\

\noindent\textbf{Claim 1.} For any copy of $H$ in $G$, the two vertices outside of it are not adjacent and $e(H)=e(G)-2\Delta$.\\
\indent By coloring the vertices of some copy of $\overline{K_3}$
with the color $1$ and rainbow coloring the rest of $V(G)$ with
colors $\{2,\ldots,k\}$, it is clear that $e(H)=e(G)-2\Delta$.  If
there are two adjacent vertices outside of a copy of $H$ in $G$ then
$e(H)=e(G)-2\Delta+1$, a contradiction.\\

\noindent\textbf{Claim 2.} $G$ has diameter $2$. \\
\indent If $G$ has diameter greater than $2$, then either there are
two vertices at a distance $3$ in $G$ or $G$ is disconnected.  If
$G$ is disconnected, then color two vertices in one component with
color $1$ and two vertices in the other component with color $2$ and
rainbow color the remaining vertices with colors $\{3,\ldots,k\}$.
No matter which vertices are chosen, any rainbow graph on $k$
vertices, under this coloring, has all vertices of degree $\Delta$
or $\Delta-1$.

If $G$ is connected and of diameter at least $3$, then let $u$ and
$v$ be vertices at distance exactly $3$ and $(u,x,y,v)$ be a
shortest $u$-$v$-path. Color $u$ and $y$ with color $1$, color $v$
and $x$ with color $2$, and rainbow color the remaining vertices
with colors $\{3,\ldots,k\}$. Under this coloring, a rainbow copy of
$H$ must contain $x$ and $y$; otherwise, there is an edge outside of
a copy of $H$, contradicting Claim 1. Therefore, $u$ and $v$ are outside of $H$. But $u$
and $v$ do not have a common neighbor, so $H$ has only vertices of
degree $\Delta$ and $\Delta-1$.

This contradicts the fact that $\delta=\Delta-2$. \\

\noindent\textbf{Claim 3.} $G$ has no $K_3$ and no $C_4$. \\
\indent Assume there is a triangle in $G$. Coloring its vertices
with color $1$ and the remaining vertices with colors
$\{2,\ldots,k\}$ would contradict Claim 1.  If  $G$ has a $C_4$,
color its independent sets with colors $1$ and $2$, respectively,
and the remaining vertices with colors $\{3,\ldots,k\}$.  Under this coloring, any
rainbow $k$-vertex graph has $e(G)-2\Delta+1$ edges, another
contradiction
to Claim 1.\\

\noindent\textbf{Claim 4.} For any two nonadjacent vertices $u$ and $v$, $G-u-v\approx H$. \\
\indent Color $u,v$ and their common neighbor (which exists by Claim
2 and is unique by Claim 3) with color $1$ and rainbow color the
remaining vertices with colors $\{2,\ldots,k\}$.  Claim 1 implies
that $G-u-v$ must induce a copy of $H$.\\

\noindent\textbf{Claim 5.} $G$ is vertex-transitive. \\
\indent Let $v,v'\in V(G)$, let $x$ and $y$ be neighbors of $v$ and
let $x'$ and $y'$ be neighbors of $v'$.  There is an isomorphism
$\varphi : (G-x-y)\rightarrow (G-x'-y')$ that sends $v$ to $v'$,
since $v,v'$ are the unique degree $\Delta-2$ vertices  in the respective
copies of $H$.  To show that the map $\varphi$ can be extended to an
isomorphism of $G$ itself, we will verify, without loss of
generality, that $x'$ is adjacent to every vertex of $\varphi(N(x))$
and $y'$ is adjacent to every vertex of $\varphi(N(y))$.

First, note that $y$ and every vertex in $N(x)\setminus\{v\}$ have
exactly one common neighbor.  This neighbor, however, cannot be $v$
or $x$ because $G$ has no $K_3$.  Moreover, such neighbors are
different for each distinct member of $N(x)\setminus\{v\}$ because
otherwise that vertex and two of its neighbors would form a $C_4$
with $x$.

Therefore, there is an induced matching between $N(x)\setminus\{v\}$
and $N(y)\setminus\{v\}$.

Let $a\in N(x)\setminus\{v\}$.  Without loss of generality, suppose
$\varphi(a)\in N(x')$.  Let $b$ be any vertex in
$N(y)\setminus\{v\}$.  If $b\sim a$, then $\varphi(b)\in N(y')$
because $G$ has no $K_3$.  If $b\not\sim a$, then $a$ and $b$ have a
common neighbor in $G-x-y$, so $\varphi(b)\in N(y')$ because $G$ has
no $C_4$. Therefore, we can conclude that $\varphi(N(y))=N(y')$ and
symmetrically, $\varphi(N(x))=N(x')$, extending $\varphi$ to an
isomorphism of $G$.\\

\noindent\textbf{Claim 6.} If $G\arrows H$, $G$ contains no $K_3$
and $n\geq 6$, then $G\approx P$ or $G\approx \Theta$.\\
\indent Since $G$ is regular with diameter $2$ and girth $5$,
the Hoffman-Singleton theorem (Theorem \ref{HoffSingThm}) gives that
the only possibilities for $G$ are $C_5$, $P$, $\Theta$ or a
$(57,2)$-Moore graph, if it exists.  Since $n\geq 6$, $G\neq C_5$.
According to an unpublished proof due to Graham Higman, printed in
Section 3.7 of Cameron~\cite{C}, if a $(57,2)$-Moore graph exists,
then it cannot be vertex-transitive.  So, only $P$ and $\Theta$
remain.\\

\noindent\textbf{Claim 7.}   $\overline{P}$ and
$\overline{\Theta}$ are edge-transitive. \\
\indent Using the definition of the Petersen graph as a Kneser graph
(see Section 1.6 of Godsil and Royle, \cite{GR}), it is easy to see
that $\overline{P}$ is edge-transitive.

Now, we shall show that $\overline{\Theta}$ is edge-transitive.
The automorphism group of $\Theta$ is of
order $50\times 7!$ (see Brouwer, Cohen and Neumaier~\cite{BCN} or Hafner~\cite{H}) and the stabilizer of a vertex $w$ is $S_7$, the
symmetric group that permutes the neighbors of $w$. Take any pair of
nonadjacent vertices $x$ and $y$. Let $v$ be their common neighbor.
Any automorphism which fixes $\{x,y\}$ also fixes $v$. So, the
subgroup of automorphisms which fix $\{x,y\}$ is of order at most
$2\times 5!$.  By the orbit-stabilizer theorem (see Section 2.2 of
\cite{GR}), the orbit of a nonedge $\{x,y\}$ is of size at least
$\frac{50\times 7!}{2\times 5!}=1050$. The number of nonedges in
$\Theta$ is $1050$, hence $\overline{\Theta}$ is edge-transitive.\\

Putting all of the claims together, if the vertices of
$G\in\{P,\Theta\}$ are colored with $n-2$ colors, then either one
color class is of size $3$ or two color classes are each of size
$2$.  There is a pair of nonadjacent vertices that can be deleted so
that each vertex that remains is of a different color.  In the first
coloring, this is because $G$ has no $K_3$; in the second, because
$G$ has no $C_4$.

Since $\overline{G}$ is edge-transitive, the deletion of any
nonadjacent vertices produces a graph isomorphic to  $P'$ or
$\Theta'$, respectively.  By adding the complementary cases, the
Lemma follows. $\Box$\\~\\

\subsection{Proof of Lemma \ref{tool}}~\\
Let $n=k+2$, $\Delta-\delta=3$, $\Delta(G)-\delta(G)=1$. Let $S$ be
a set of vertices that induces $H$ and let
$\Delta=\deg(G[S],y_k)\geq\cdots\geq\deg(G[S],y_1)=\delta$.  By
Lemma \ref{neighborhood-into-S}, part \ref{DELdel}, there are two
possibilities, {CASE 1} and {CASE 2}:\\

\noindent\textbf{CASE 1.} There is a $v_0\in V(G)\setminus S$ such that $G\left[\{v_0\}\cup S\setminus\{y_k\}\right]\approx H$. \\
\indent Let $\{v_1\}=V(G)\setminus (S\cup\{v_0\})$.  Since
$\left|N(v_0)\cap (S\setminus\{y_k\})\right|=\Delta$ and
$\Delta=\Delta(G)$, it is the case that $v_1\not\sim v_0$.  Color
$\{v_0\}\cup S\setminus\{y_1,y_2\}$ with colors $\{1,\ldots,k-1\}$
and color $\{y_1,y_2,v_1\}$ with color $k$.  In the rainbow copy of
$H$, $v_1$ must be chosen; otherwise the resulting rainbow subgraph
has at least $e(H)-(\delta+1)+(\Delta-1)>e(H)$ edges.

So, $U:=V(G)\setminus\{y_1,y_2\}$ induces $H$.  Count the number of edges in the subgraph induced by $U$:
\begin{eqnarray}
   e(G[U])=e(H) & \geq & e(H)-|S\cap N(y_2)|-|S\cap N(y_1)|
   +(\deg(G,v_0)-2)+(\deg(G,v_1)-2) \nonumber \\
   & \geq & e(H)-(\Delta-2)-(\Delta-3)+\deg(G,v_0)
   +\deg(G,v_1)-4 \label{eq:DEGS1} \\
   &=& e(H)-2\Delta+5+\Delta+\deg(G,v_1)-4 \nonumber \\
   &=& e(H)-\Delta+1+\deg(G,v_1). \nonumber
\end{eqnarray}
So, $\deg(G,v_1)\leq\Delta-1$ but this must occur with equality because $\delta(G)=\Delta-1$.  Since $\deg(G, v_0)=\Delta$, $\deg(G,v_1)=\Delta-1$, $v_0\not \sim v_1$, we have that
\begin{equation}\label{e1}
e(H)=e(G)-2\Delta +1.
\end{equation}

If there are three vertices of degree $\Delta-1$ in $G$, then color
each of them with color $k$ and rainbow color the remaining vertices
with colors $\{1,\ldots,k-1\}$.  Any rainbow colored graph on $k$
vertices under this coloring would have at least $e(G)-2\Delta+2$
edges, a contradiction to (\ref{e1}).  Thus, there are at most two
vertices of degree $\Delta-1$ in $G$.

Because $\deg(G,v_1)=\Delta-1$, equality holds in (\ref{eq:DEGS1})
and, in particular, $|S\cap N(y_1)|=\Delta-3$. Since
$\delta(G)=\Delta-1$, we have that $y_1\sim\{v_0,v_1\}$ and
$\deg(G,y_1)=\Delta-1$. Thus, $v_1$ and $y_1$ are the only vertices
of degree $\Delta-1$ in $G$.  The vertex $y_2$ has degree $\Delta$
in $G$ but the equality in (\ref{eq:DEGS1}) gives that $y_1\not\sim
y_2$ and $|S\cap N(y_2)|=\Delta-2$.  So, $y_2\sim\{v_0,v_1\}$.
Putting all of this information together, we arrive at the fact that
$\{v_0,y_2,v_1,y_1\}$ forms an induced copy of $C_4$ with
$v_0\not\sim v_1$ and $y_1\not\sim y_2$.

If we rainbow color $S\setminus\{y_2,y_1\}$ with colors
$\{1,\ldots,k-2\}$ and color the vertices in $\{y_2,y_1\}$ with
color $k-1$ and $\{v_0,v_1\}$ with color $k$, then the only
possibility for a rainbow graph with $e(G)-2\Delta+1$ edges in this coloring is $U':=V(G)\setminus\{v_0,y_2\}$.  Since $G\arrows H$, the graph induced by $U'$ must be isomorphic to $H$. Let $w$ be a vertex of degree $\Delta-3$ in $U'$, then $\deg(G,w)=\Delta-1$. This forces $w$ to be either $v_1$ or $y_1$. However, both $v_1$ and $y_1$ have exactly one neighbor among $\{v_0,y_2\}$, giving that $\deg(G[U'],v_1)=\deg(G[U'],y_1)=\Delta-2$, a contradiction.\\

\noindent\textbf{CASE 2.} There is a $v_1\in V(G)\setminus S$ such that $G\left[\{v_1\}\cup S\setminus\{y_1\}\right]\approx H$. \\
\indent Consider ${\overline G}$ and ${\overline H}$. We have that
${\overline G}\arrows {\overline H}$ and ${\overline G}$ is bounded
by ${\overline H}$.  Observe that $y_1$ in an induced copy of $H$ in
$G$ corresponds to $y_k$ in the same set of vertices, which induce a
copy of ${\overline H}$ in ${\overline G}$. Thus we have {CASE 1}
for ${\overline G}$ and ${\overline H}$, resulting
in contradiction. $\Box$\\~\\

\subsection{Proof of Lemma \ref{unbounded-unique-delta}}~\\
Recall that $\delta=1$.  Suppose $v$ is a vertex such that
$\deg(G,v)=\delta(G)<n-k+\delta$. Then $\codeg(G,v) \geq n-(n-k+\delta) = k-\delta=k-1$.
Color
$k-1$ non-neighbors of $v$
with distinct colors,  color the rest of the graph with the
remaining colors. Let $S$ induce a rainbow copy of $H$ in $G$ in
this coloring.

Suppose $v\in S$.  Since $k-1$ non-neighbors of $v$ must be in
$S$, we see that $H$ would contain a vertex (namely, $v$) with at
least $k-1$ non-neighbors, and thus having degree $0$, a contradiction.  As a result,
$v\not\in S$.

Label the vertices of $S$ so that
$\Delta=\deg(G[S],y_k)\geq\cdots\geq\deg(G[S],y_1)=\delta$.  Let
$U=\{v\}\cup S\setminus\{y_k,y_{k-1}\}$.  Using Lemma \ref{deck}, we
have that $e(G[U])\geq e(H)-\Delta$. On the other hand, we know that
$v$ has at least $k-1$ non-neighbors in $S$, and so it has at
most one neighbor in $S$.

As a result, $e(G[U])\leq e(H)-\Delta-(\Delta-1)+1+1$ and
$e(G[U])\geq e(H)-\Delta$, so $\Delta\leq 3$.  If equality
occurs then, by Lemma \ref{neighborhood-into-S}, part \ref{DEL},
$H_3=\{y_k\}$ and $y_k\sim H_{2}$.  Moreover, the single
member of $N(v)\cap S$ is neither $y_k$ nor $y_{k-1}$. Furthermore,
equality also implies that it is not possible to find a $y_{k-1}$
adjacent to $v$.  Hence, $v\not\sim H_3\cup H_2$. $\Box$
\\~\\


\subsection{Proof of Lemma \ref{matching-isolated}}~\\
Let $\Delta(H)=1$ and
$H\not\in\left\{K_2,\overline{K_2},\overline{P_3}\right\}$.  Hence,
$k\geq 4$ and $\Delta(H)\leq k-3$.  Let $G\arrows H$, where $G$ is a
graph on $n\geq k+2$ vertices.  By Lemma \ref{bounded},
$\Delta(G)=1$.  We have the following cases:\\

\noindent\textbf{CASE 1.} $G$ has more isolated vertices than $H$.\\
\indent Color the isolated vertices of $G$ with as many colors as
possible using at most $k-1$ colors, color the vertices of degree
$1$ in $G$ with the remaining colors. It is clear that any rainbow
subgraph on $k$ vertices will have more isolated vertices than
$H$.\\

\noindent\textbf{CASE 2.} $G$ has more edges than $H$ and $H$ has at
least three isolated vertices.\\
\indent In this case, the number of vertices of degree $1$ in $H$ is
at most $k-3$.  Color vertices in as many edges of $G$ as possible
with distinct colors, using at most $k-1$ colors, and color the rest
of the graph with the remaining colors.  Any rainbow subgraph on $k$
vertices will contain more edges than $H$.\\

\noindent\textbf{CASE 3.} $G$ has more edges than $H$ and $H$ has at
most two isolated vertices.\\
\indent Assume first that $n\geq k+3$, then color each of the
isolated vertices of $G$ with distinct colors and, for as many edges
as possible, color the endvertices with the same color, a different
color on each edge. We see that any rainbow $k$-vertex subgraph has
at least $3$ isolated vertices.

Assume now that $n=k+2$. If $G$ has at least one isolated vertex,
then color the endpoints of one edge with color $1$, color the
endpoints of another edge with color $2$, rainbow color the rest.
Then, $H$ must have three isolated vertices, a contradiction. Thus,
$G$ is a matching. If, as before, we color the vertices of one edge
with color $1$, then the vertices in another edge with color $2$ and
the rest with remaining colors, then $H\approx M_{k/2+1}'$.

To complete the proof, observe that any coloring of $G\approx M_{k/2+1}$ with $k$ colors gives such a
rainbow $H\approx M'_{k/2+1}$. $\Box$\\~\\

\subsection{Proof of Lemma \ref{Delta2}}~\\
\indent Let $\Delta=2$, $H\not\in\{\Lambda,\overline{S_4}\}$.\\

\noindent\textbf{CASE 1.}  $H$ has two nonadjacent vertices of degree $2$.\\
\indent If $k\leq 4$, the only possibility is $H\approx C_4$.
Corollary 2 from the paper \cite{A} gives that $f(H)=k$ for any
regular graph $H$.

Hence, we may assume $k\geq 5$ and $\Delta=2\leq k-3$.  By Lemma
\ref{bounded} part \ref{lem:deg}, $\Delta(G)=\Delta=2$.
We shall show that $G$ is a disjoint union of cycles each of length at least $5$. \\

\noindent
{\bf Claim 1.} Let $S\subset V(G)$ be any vertex set that induces a copy of $H$.
Then $V(G)\setminus S$ is an independent set.\\

For all $v\in V(G)\setminus S$  Lemma
\ref{neighborhood-into-S} part \ref{DEL} gives that  $|N(v)\cap
(S\setminus\{y_k,y_{k-1}\})|=\Delta=2$.  Each vertex in
$V(G)\setminus S$ sends $2$ edges into $S$, and thus no edges into $V(G)\setminus S$. So $V(G)\setminus S$ induces an
empty graph. \\

\noindent
{\bf Claim 2.} $G$ is a union of cycles.\\

Assume that there is a vertex $v$ in $G$ such that  $\deg(G,v)<2$,
consider any two adjacent vertices $w,w'$.  Color $v,w,w'$ with one
color and the rest of the graph with the remaining $k-1$ colors.
Let $U\subseteq V(G)$  induce a rainbow copy of $H$ under this coloring.
Then $U$ contains one vertex from $\{v,w,w'\}$, moreover $v\in U$, since its degree is less than $2$.
On the other hand, both $w$ and $w'$ cannot be in $V(G)\setminus U$ since
they are adjacent, contradicting the fact that $V(G)\setminus U$
is an independent set.  Thus there is no vertex of degree less than $2$ in $G$.
Therefore $G$ is $2$-regular, and Claim 2 follows.\\

\noindent
{\bf Claim 3.} $G$ has no $K_3$ and no $C_4$.\\

Assume that $G$ has a triangle.  Color its three vertices with the same
color and color the rest of the vertices with new colors.  In this
coloring, there is a pair of adjacent vertices outside of a rainbow
copy of $H$, a contradiction. If $G$ has a $C_4$, then color each of the
two nonadjacent vertices of a copy of $C_4$ with color $1$, color
each of the two other vertices of $C_4$ with color $2$ and the color
the rest of the vertices of $G$ with the remaining $k-2$ colors
(here $n\geq k+2$ is necessary).  This gives two adjacent vertices
outside of a rainbow copy of $H$, a contradiction. \\

If $n\geq k+3$, then color four consecutive vertices on one cycle
with one color, and color the rest of the vertices arbitrarily with
the remaining $k-1$ colors.  As a result, there is an edge outside
of the rainbow copy of $H$, a contradiction.

Thus we  may assume that $n=k+2$. Color three consecutive vertices on one of the cycles of
$G$ with color $1$ and rainbow color the rest of the vertices with
the remaining colors. Since there is no edge outside of a copy of
$H$ in $G$, we must pick the middle vertex of color $1$ in a rainbow
copy of $H$. Thus, $H$ is a disjoint union of an isolated vertex, a
path and perhaps some cycles. If $G$ has at least two cycles, then
color two vertices in one cycle with color $1$, color two vertices
in another cycle with color $2$ and rainbow color the rest of the
vertices with the remaining $k-2$ colors.  Under this coloring, $H$
has no isolated vertex, a contradiction.  The only case that remains
is that $G$ is a single cycle and $H=P_{k-1}+K_1$. Since $H$ has two
nonadjacent vertices of degree two, $k-1\geq 5$, thus $n\geq 8$.
Let  $v,v'$ and $u,u'$ be two pairs of consecutive vertices  of $G$
such that $G-u-u'-v-v'$ consists of two paths, each of length at least
$2$.  Color $v,v'$ with color $1$ and $u,u'$  with color $2$, rainbow color the rest of the vertices with remaining colors.
Under this coloring, $H$ has no isolated vertices, a contradiction.\\

\noindent\textbf{CASE 2.} All degree $2$ vertices of $H$ are
adjacent.\\
\indent Recall that $n\geq k+2$. We have that $H$ has one component
$L\in \{P_3,K_3,P_4\}$, and all other components are isolated edges
and vertices. We may assume that $k\geq 4$ since
$H\not\in\mathcal{F}_{\infty}$. If $k=4$ then
$H\in\{P_4,\Lambda,\overline{S_4}\}$. Since
$H\not\in\mathcal{F}_{\infty}$, we see that $H\approx P_4$. Since
$n\geq k+2=6$, Ramsey's theorem gives that $G$ contains either $K_3$
or $\overline{K_3}$, contradicting the Deck Lemma (Lemma
\ref{deck}). We can assume that $k\geq 5$,  giving that
$\Delta=2\leq k-3$ and $\Delta(G)=2$ by Lemma \ref{bounded}. The
Deck Lemma implies that each connected subgraph  of $G$ on $|V(L)|$
vertices is isomorphic to $L$.

Assume first that $k\geq 7$.  There is only one component of $G$
with at least three vertices (call such a component large) and this
large component is either $P_3$, $K_3$ or $P_4$. Indeed, otherwise
one can find two nonadjacent vertices of degree $2$ in $G$;
considering these and their neighbors will contradict the Deck
Lemma.  Since $n\geq k+2$, we can color the vertices of the large
component of $G$ with two colors and color the remaining vertices
arbitrarily.  Under this coloring, any rainbow subgraph has
components with at most two vertices, a contradiction.

Thus, $k=5$ or $6$ and $L\in\{P_3,K_3,P_4\}$.

Let $k\in\{5,6\}$ and $L\in \{P_3, K_3\}$.  Then, $G$  has no
component on more than three vertices, and each large component of
$G$ is isomorphic to $L$. If $G$ has only one component isomorphic
to $L$, then color it with two colors, and color the rest
arbitrarily, resulting in contradiction. If $G$ has at least two
such components, color the vertices in two copies of $L$ with two
colors each and color the rest of $V(G)$ with the remaining $1\leq
k-4\leq 2$ colors.  Under this coloring, no rainbow subgraph has a
component with more than $2$ vertices.

Let $k=6$ and $L\approx P_4$.  Then, $H$ is either $P_4+K_2$ or
$P_4+2K_1$. Any component of $G$ on at least three vertices must be
$P_4$ since otherwise there is a subgraph $P_5$ of $G$ which
contains two vertices of degree $2$, nonadjacent in $G$, such that
these two vertices and their neighbors span at most $5$ vertices,
which contradicts the Deck Lemma.  Color the vertices of $P_4$ in
$G$ with three colors and color the rest of the graph with the
remaining three colors.  Under this coloring, no rainbow subgraph has a $P_4$.

Let $k=5$ and $L\approx P_4$. Then $H\approx P_4+K_1$. In
particular, we have that $H$ has no induced $2K_2$. Thus, $G$ has
only one nontrivial (with at least one edge) component and this
component is either $P_4$ or $C_5$. Since $n\geq k+2$, we can color
this component with three colors and color the rest of the vertices
arbitrarily with the remaining $2$ colors, arriving at a
contradiction. $\Box$\\~\\

\subsection{Proof of Lemma \ref{03}}
Recall that $\Delta=\Delta(H)=3$, $\delta=\delta(H)\leq 1$ and
$\delta(G)<n-k+\delta$. \\

\noindent\textbf{Claim 1.} There exists a set $S\subset V(G)$ and vertices $y_1\in S$ and $v_1\in V(G)\setminus S$ such that $G[S]\approx H$, $y_1$ is a minimum-degree vertex in $G[S]$ and $N(v_1)\cap S=\{y_1\}$. \\
\indent If $\delta=1$, then this follows directly from Lemma
\ref{unbounded-unique-delta}.

If $\delta=0$, then suppose the claim is false.  Lemma
\ref{neighborhood-into-S}, part \ref{DELdel} gives that there is a
$v_0\in V(G)\setminus S$ such that
$S_0:=\{v_0\}\cup\left(S\setminus\{y_k\}\right)$ and $G[S_0]\approx
H$. In this case, $\left|N(v_0)\cap\left(S\setminus\{y_1,y_2\}\right)\right|\leq
\delta+1=1$, so $v_0\sim\{y_1,y_2\}$ and Lemma
\ref{neighborhood-into-S}, part \ref{del} also gives that $y_1$
is the unique isolated vertex in $G[S]$.  We have the freedom to
choose $y_2$ to be any degree-one vertex in $G[S]$, hence $v_0$ is
adjacent to every vertex of degree at most one in $G[S]$.

As a result, $G[S_0]$ has no isolated vertices, a contradiction to
the claim that $G[S_0]\approx H$.  This proves Claim 1. \\

\noindent\textbf{Claim 2.} $|H_3|=1$, $H_3\sim H_2$. \\
\indent If $\delta=1$, this follows directly from Lemma
\ref{unbounded-unique-delta}.  If $\delta=0$, this comes from Claim
1 and Lemma \ref{neighborhood-into-S}, part \ref{DEL}.  This
proves Claim 2. \\

\noindent\textbf{CASE 1.} $\delta=0$. \\
\indent By Lemma \ref{neighborhood-into-S}, part \ref{del}, the
graph $G[S]$ has a unique isolated vertex, $y_1$.  Since $|H_3|=1$,
the component containing $y_k$ has an odd number of degree-one
vertices.

If the component of $G[S]$ containing $y_k$ has 3 degree-one
vertices, call two of them $y_2$ and $y_3$.  Let
$S'=\{v_1\}\cup\left(S\setminus\{y_2,y_3\}\right)$, where $v_1$ as in Claim 1. The graph
$G[S']$ has no isolated vertices.  Since $|S'|=k-1$ and
$e(S')=e(H)-1$, the Deck Lemma implies that $S'$
is obtained by deleting a degree-one vertex from $H$, which would
yield at least one isolated vertex, a contradiction.

Therefore, we may assume that the component of $G[S]$ containing
$y_k$ has exactly 1 degree-one vertex.  Since all degree-two
vertices must be adjacent to $y_k$, the vertex set
$\{y_k,y_{k-1},y_{k-2}\}$, for two degree-two vertices $y_{k-1}$ and
$y_{k-2}$, induces a triangle. Let
$S''=\{v_1\}\cup\left(S\setminus\{y_{k-1},y_{k-2}\}\right)$. The
graph $G[S'']$ has no isolated vertices.  Since $|S''|=k-1$ and
$e(S'')=e(H)-2$, the Deck Lemma implies that $S''$ is obtained by
deleting a degree-two vertex from $H$, which would yield at least
one isolated vertex, a contradiction.

Hence, there is no graph in CASE 1. \\

\noindent\textbf{CASE 2.} $\delta=1$. \\
\indent If $y_1\not\sim y_k$, then let
$S'=\{v_1\}\cup\left(S\setminus\{y_k,y_1\}\right)$, where $v_1$ as in Claim 1. Since
$|S'|=k-1$ and $e(S')=e(H)-4$, the Deck Lemma is contradicted.
Therefore, we may assume that $y_1\sim y_k$.

If $H$ is not connected, then since every degree-two vertex in
$G[S]$ is adjacent to $y_k$, every connected component of $G[S]$ not
containing $y_k$ must be an isolated edge.  Let $\{y_2,y_3\}$ be a
component of $G[S]$ not containing $y_k$.  If
$S''=\{v_1\}\cup\left(S\setminus\{y_2,y_3\}\right)$, then
$e(S'')=e(H)$, contradicting the Deck Lemma.  Therefore, we may also
assume that $H$ is connected.

Since $H_3\sim H_2$, $y_1\sim y_k$ and $H$ is connected, there are
only three possibilities for $H$: one for each of $k=4,5,6$.  If
$k\leq 4$, then $H\approx\overline{\Lambda}$.  If $k\in\{5,6\}$ then
Figure \ref{fig:3new} gives these graphs and the possible ways for
$v_1$ to be adjacent to $S$.

\begin{figure}[ht]
\begin{center}
\epsfig{file=./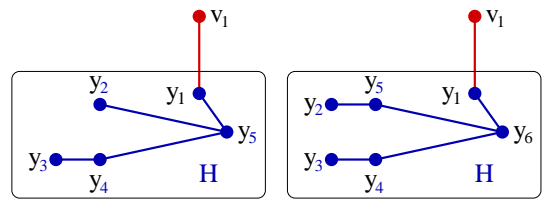, height=1in}
\end{center}
\caption{Small graphs, $\delta=1, \Delta=3$ in Lemma
\ref{03}}\label{fig:3new}
\end{figure}

In the case where $k=5$, the vertex set $\{v_1,y_1,y_3,y_4\}$
induces the graph $2K_2$, which is not in $\textrm{deck}(H)$.  In
the case where $k=6$, the vertex set $\{v_1,y_1,y_2,y_3,y_6\}$
induces the graph $P_3+2K_1$, which again is not in
$\textrm{deck}(H)$.

Hence, the only graph in CASE 2 is $H\approx\overline{\Lambda}$,
which must be excluded because
$\overline{\Lambda}\in\mathcal{F}_{\infty}$. $\Box$\\~\\

\section{Concluding remarks}

{\bf Open question: $H\approx\Lambda, H\approx \overline{\Lambda}$}~\\
It is still an open problem to determine ${\mathcal Arrow}(\Lambda)$ and ${\mathcal Arrow}(\overline{\Lambda})$.
We see by Lemma \ref{k+1} that $C_4+K_1\arrows\Lambda$;  it is shown in \cite{A}, that for each $t\geq 1$ there is a graph $G$
on $7t$ vertices such that $G\arrows\Lambda$.
A case analysis, which we neglect to include in this paper, gives
that for any $G$ of order $6$,  $G\notarrows\Lambda$.

Thus, even the weaker problem of determining $\{n: |V(G)|=n \mbox { and } G\arrows \Lambda \}$
is still open.  \\

{\bf Generalizing the problem}~\\
A natural generalization of this problem is as follows: Let
$\mathcal{H}$ be a set of $k$-vertex graphs and define
$G\arrows\mathcal{H}$ so that if $V(G)$ is colored with $k$ colors,
then there is an $H\in\mathcal{H}$ such that $G$ contains a rainbow induced subgraph isomorphic to $H$. Determine ${\mathcal Arrow}(\mathcal{H})= \{ G: G\arrows \mathcal{H}\}$,
for interesting sets $\mathcal {H}$ of graphs. \\


{\bf Acknowledgements}\\
We thank an anonymous referee for helpful comments.  We also profoundly acknowledge and  thank Chris Godsil for
helpful remarks and aiding with the proof of Lemma \ref{regular}.

\end{document}